\newtheorem{theorem}{Theorem}[section]
\newtheorem{lemma}[theorem]{Lemma}
\newtheorem{definition}[theorem]{Definition}
\newtheorem{algo}[theorem]{Algorithm}
\newcommand{\R}{\mathbb{R}}
\newcommand{\Z}{\mathbb{Z}}
\newcommand{\C}{\mathbb{C}}
\newcommand{\F}{\mathcal{F}}
\newcommand{\eps}{\varepsilon}
\newcommand{\SNR}{\text{SNR}}
\begin{document}

\title{Synchrosqueezed wave packet transforms and diffeomorphism based spectral analysis for 1D general mode decompositions}

\author{Haizhao Yang\\
  \vspace{0.1in}\\
  Department of Mathematics, Stanford University
}

\date{October 2013; revised April 2014}
\maketitle

\begin{abstract}
  This paper develops new theory and algorithms for 1D general mode decompositions. First, we introduce the 1D synchrosqueezed wave packet transform and prove that it is able to estimate instantaneous information of well-separated modes from their superposition accurately. The synchrosqueezed wave packet transform has a better resolution than the synchrosqueezed wavelet transform in the time-frequency domain for separating high frequency modes. Second, we present a new approach based on diffeomorphisms for the spectral analysis of general shape functions. These two methods lead to a framework for general mode decompositions under a weak well-separation condition and a well-different condition. Numerical examples of synthetic and real data are provided to demonstrate the fruitful applications of these methods.

\end{abstract}

{\bf Keywords.} Mode decomposition, general shape function, instantaneous, synchrosqueezed wave packet transform, diffeomorphism.

{\bf AMS subject classifications: 42A99 and 65T99.}

\section{Introduction}
\label{sec:intro}
\subsection{Problem statement}
In signal processing, analyzing instantaneous properties (e.g., instantaneous frequencies, instantaneous amplitudes and instantaneous phases \cite{Boashash92,Picinbono97}) of signals has been an important topic for over two decades. In many applications \cite{Daubechies2011,Eng2,Hau-Tieng2013,Eng1,SSCT,SSWPT}, a signal would be a superposition of several components, for example, a complex signal
\begin{equation}
\label{P1}
f(t)=\sum_{k=1}^K \alpha_k(t) e^{2\pi i N_k \phi_k(t)},
\end{equation}
where $\alpha_k(t)$ is the instantaneous amplitude, $2\pi N_k \phi_k(t)$ is the instantaneous phase and $N_k\phi_k'(t)$ is the instantaneous frequency. One wishes to decompose the signal $f(t)$ to obtain each component $\alpha_k(t) e^{2\pi iN_k \phi_k(t)}$ and its corresponding instantaneous properties. This is referred to as the mode decomposition problem. 

\begin{figure}[ht!]
  \begin{center}
    \begin{tabular}{c}
      \includegraphics[height=2.4in]{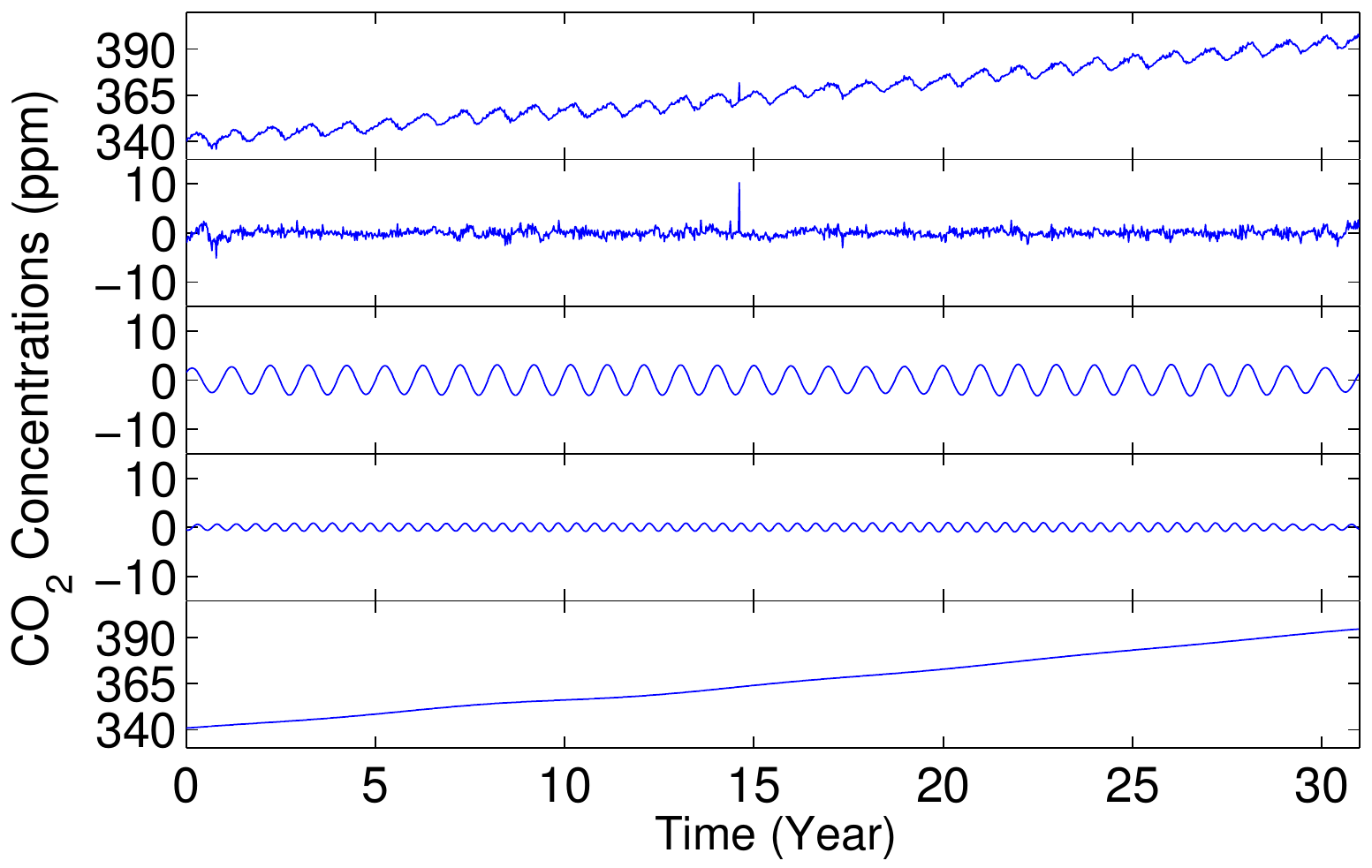} 
    \end{tabular}
  \end{center}
  \caption{The top signal is the observed CO$_2$ concentration of recent $31$ years (1981-2011) at MLO. Below the original signal are the components provided by the wavelet transform. Only relevant components are separated and presented.}
  \label{fig:CO2wavelet}
\end{figure}

In spite of considerable successes of analyzing signals by decomposing them in the form \eqref{P1}, a superposition of a few wave-like components belongs to a very limited class of oscillatory patterns. Most of all, decompositions in the form \eqref{P1} lose important physical information in some cases as detailed in \cite{Hau-Tieng2013,PhysicalAnal}. To be more concrete, we take the same daily atmospheric CO$_2$ concentration data in \cite{PhysicalAnal} as an example. It is observed by National Oceanic and Atmospheric Administration at Mauna Loa (MLO). The method based on wavelet transforms is capable of decomposing data in the form \eqref{P1}, providing one annual cycle, one semiannual cycle and a growing trend (see Figure \ref{fig:CO2wavelet}). However, each component alone cannot reflect the true nonlinear evolution pattern: the CO$_2$ concentration slowly increased in a longer period and quickly decreased in a shorter period. This special pattern is a result of seasonal photosynthetic drawdown and respiratory release of CO$_2$ by terrestrial ecosystems \cite{PhysicalAnal}. Fortunately, such a nonlinear evolution pattern can be recovered by summing up the annual cycle and the semiannual cycle as shown in Figure \ref{fig:CO2WaveShape}. This motivates the study of a more general decomposition of the form
\begin{equation}
\label{P2}
f(t)=\sum_{k=1}^K f_k(t) = \sum_{k=1}^K \alpha_k(t) s_k(2 \pi N_k \phi_k(t)),
\end{equation}
where $\{s_k(t)\}_{1\leq k\leq K}$ are $2\pi$-periodic general shape functions. By applying the Fourier expansion of general shape functions, the form \eqref{P2} is informally similar to the form \eqref{P1} with infinite terms, i.e., 
\begin{equation}
f(t)= \sum_{k=1}^K \alpha_k(t) s_k(2 \pi N_k \phi_k(t))=\sum_{k=1}^K \sum_{n=-\infty}^{\infty} \widehat{s_k}(n)\alpha_k(t) e^{2\pi i n N_k \phi_k(t)}.
\end{equation}
One could combine terms with similar oscillatory patterns in the form \eqref{P1} to obtain a more efficient and more meaningful decomposition in the form \eqref{P2}. This is the general mode decomposition problem discussed in this paper.

\begin{figure}
  \begin{center}
    \begin{tabular}{ccc}
      \includegraphics[height=1.6in]{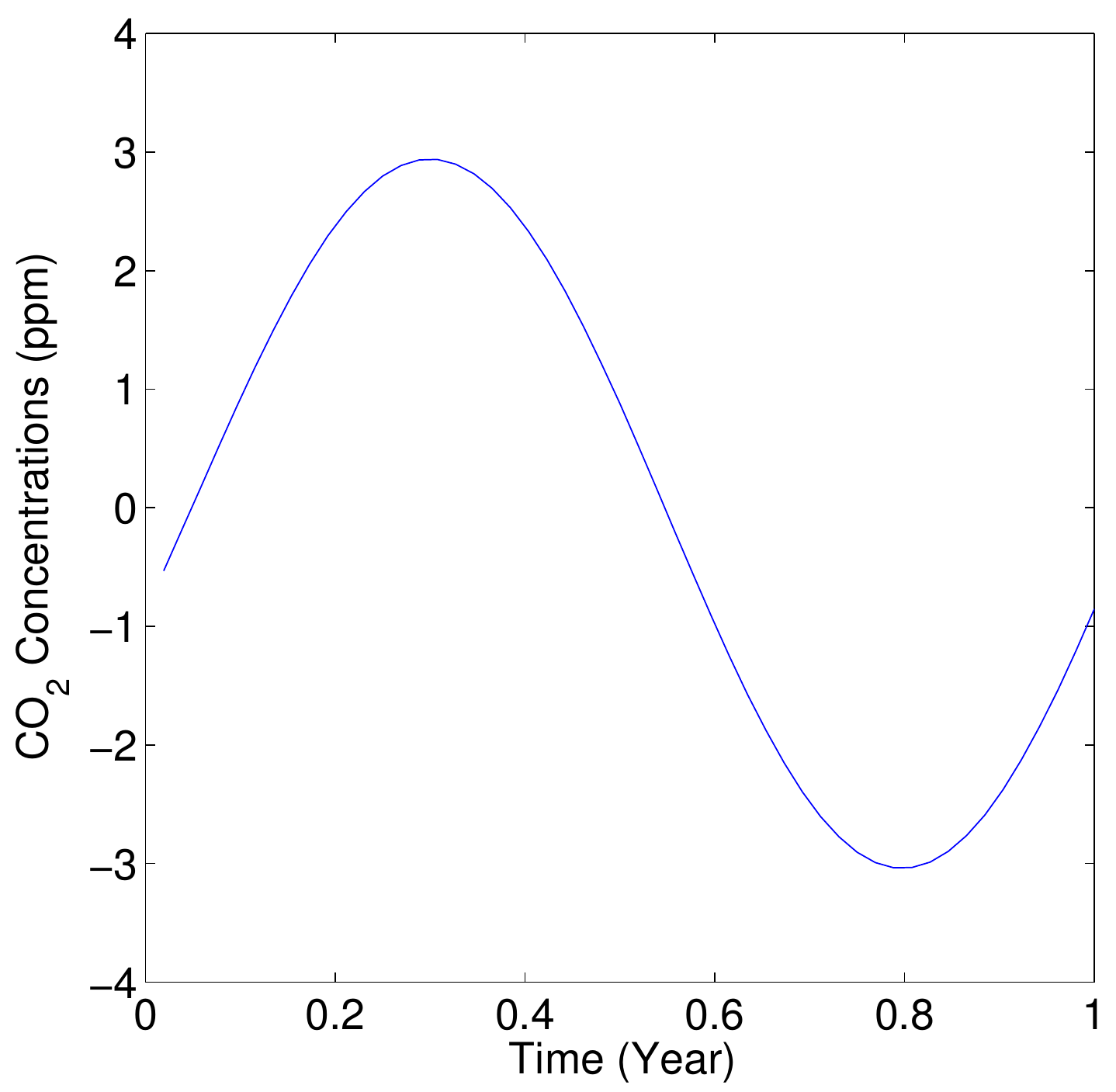} & \includegraphics[height=1.6in]{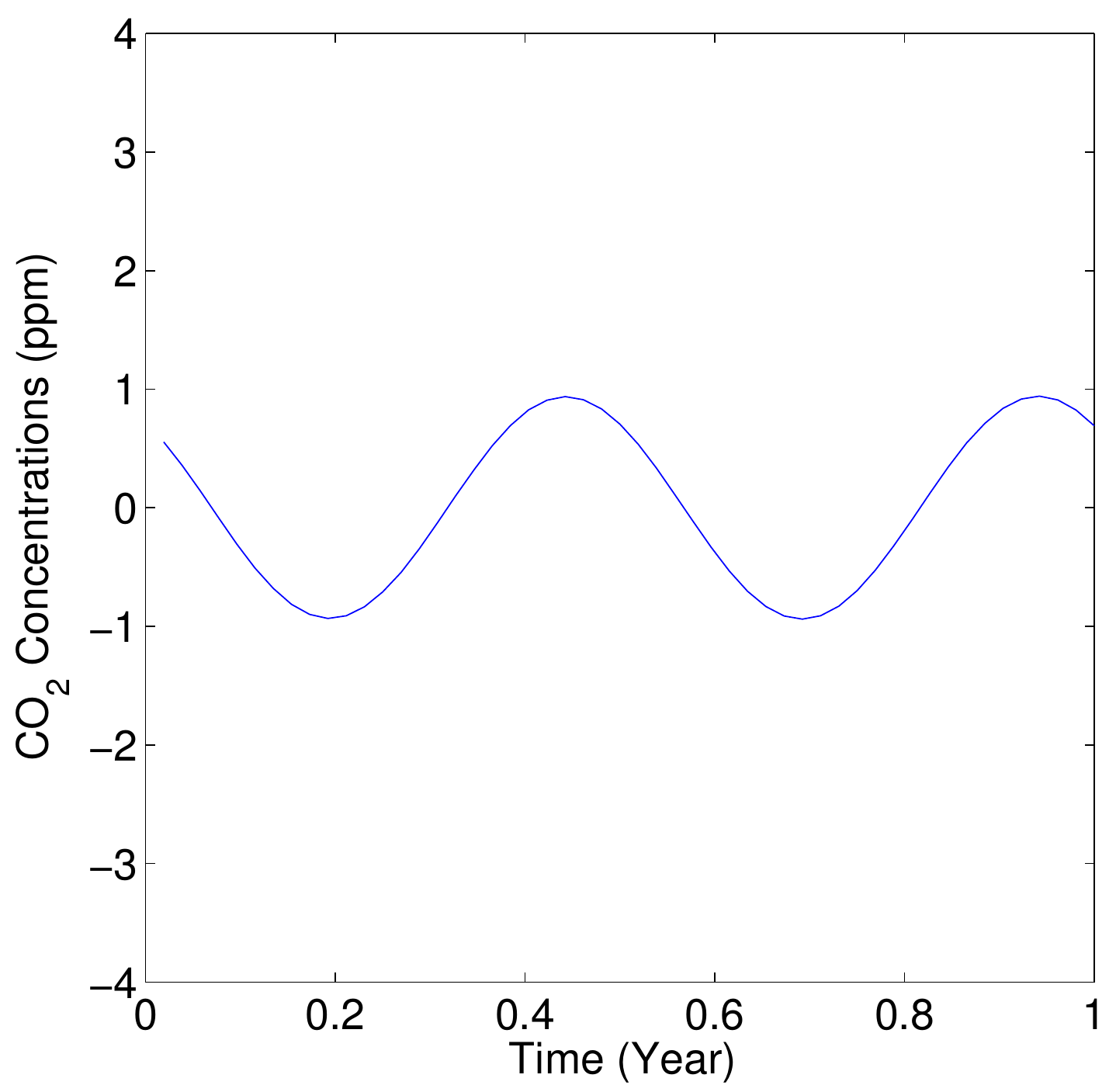} & \includegraphics[height=1.6in]{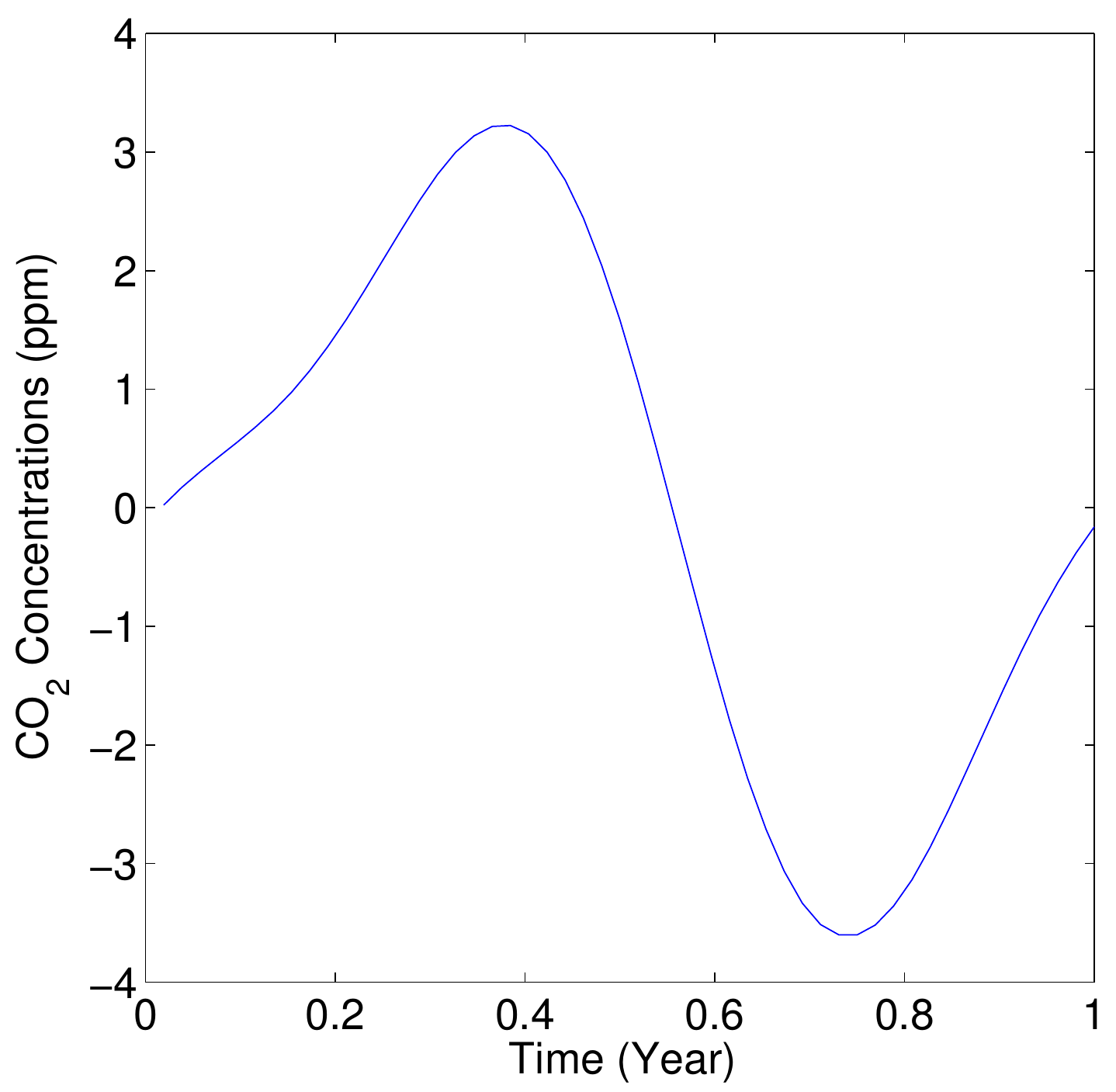}
    \end{tabular}
  \end{center}
  \caption{Wave shapes of relevant components provided by wavelet transform. Left: Annual wave shape. Middle: Semiannual wave shape. Right: Summation of the annual wave shape and semiannual wave shape.}
  \label{fig:CO2WaveShape}
\end{figure}

\subsection{Synchrosqueezed time-frequency analysis}

A powerful tool for mode decomposition problem is the synchrosqueezed time-frequency analysis consisting of a linear time-frequency analysis tool and a synchrosqueezing technique \cite{Daubechies2011,SSSTFT,SSGeneral,SSRobust}. 
Synchrosqueezed wavelet transforms (SSWT), first proposed in \cite{Daubechies1996} by Daubechies et al., can accurately decompose a class of superpositions of wave-like components and estimate their instantaneous frequencies, as proved rigorously in \cite{Daubechies2011}. Following this research line, a synchrosqueezed short-time Fourier transform (SSSTFT) and a generalized synchrosqueezing transform have been proposed in \cite{SSSTFT} and \cite{SSGeneral}, respectively. Stability properties of these synchrosqueezing approaches have been studied in \cite{SSRobust} recently. With these newly developed theories, these synchrosqueezing transforms have been applied to analyze signals in the form \eqref{P1} in many applications successfully  \cite{HauBio3,Geo,SSRobust,HauBio2,HauBio1}.

In the analysis of existing synchrosqueezed transforms \cite{Daubechies2011,SSSTFT}, a key requirement to guarantee an accurate estimation of instantaneous properties and decompositions is the well-separation condition for a class of superpositions of intrinsic mode type functions. Let us take the SSWT as an example. Since it is of significance to study the relation between the magnitudes of instantaneous frequencies and the accuracy of instantaneous frequency estimates, $N$ and $N_k$ are introduced in the following definitions.

\begin{definition}
\label{def:IMFSSWT}
(Intrinsic mode type function for the SSWT). A continuous function $f:\R\rightarrow\C$, $f\in L^\infty(\R)$ is said to be intrinsic-mode-type (IMT) with accuracy $\epsilon>0$ if $f(t)=a(t)e^{2\pi iN\phi(t)}$ with $a(t)$ and $\phi(t)$ having the following properties:
\begin{eqnarray*}
&a\in C^1(\R)\cap L^\infty(\R),\quad \phi\in C^2(\R)\\
&\underset{t\in\R}{\inf}\phi'(t)>0,\quad \underset{t\in\R}{\sup}\phi'(t)<\infty,\quad \underset{t\in\R}{\sup}|\phi''(t)|<\infty,\\
&|a'(t)|\leq \epsilon|N\phi'(t)|,|\phi''(t)|\leq\epsilon|\phi'(t)|,\quad \forall t\in\R.
\end{eqnarray*}
\end{definition} 

\begin{definition}
\label{def:WSSSWT}
(Superposition of well-separated intrinsic mode functions for the SSWT). A function $f:\R\rightarrow \C$ is said to be a superposition of well-separated intrinsic mode functions, up to accuracy $\epsilon$, and with separation $\triangle$, if there exists a finite $K$, such that
\[ f(t)=\sum_{k=1}^K f_k(t)=\sum_{k=1}^K a_k(t)e^{2\pi iN_k\phi_k(t)},\]
where all the $f_k$ are IMT, and where moreover their respective phase functions $\phi_k$ satisfy
\[ N_k\phi'_k(t)>N_{k-1}\phi'_{k-1}(t)\quad \text{and}\quad |N_k\phi'_k(t)-N_{k-1}\phi'_{k-1}(t)|\geq \triangle\left[N_k \phi'_k(t)+N_{k-1}\phi'_{k-1}(t)\right],\quad \forall t\in \R.\] 
\end{definition}

In \cite{Daubechies2011}, it is proved that the SSWT can estimate instantaneous frequencies of well-separated intrinsic mode functions from their superposition, using a mother wavelet supported in $[1-d,1+d]$, with $d<\triangle/(1+\triangle)$. The well-separation condition can be essentially referred to as the condition that the instantaneous frequencies $N_k\phi'_k(t)$ are not crossing over the support of the same wavelet in the time-frequency domain. 

For the general mode decomposition problem, a straightforward question would be whether the synchrosqueezed time-frequency representation can extract general modes $\{\alpha_k(t) s_k(2 \pi N_k \phi_k(t))\}_{1\leq k\leq K}$, identify general shape functions $\{s_k(t)\}_{1\leq k\leq K}$ and estimate instantaneous properties. Recently, \cite{Hau-Tieng2013} shows that the SSWT can be used to solve the general mode decomposition problem for a superposition of well-separated general modes with analytic wave shape functions $s_k(t)$ sufficiently close to the exponential function $e^{it}$, i.e., a few terms of the Fourier expansion of $s_k(t)$ are sufficient to approximate $s_k(t)$ well. 
However, this class of band-limited wave shape functions in \cite{Hau-Tieng2013} is still restrictive in some situations, e.g., spike signals in neurons have shape functions with a wide Fourier band as shown in Figure \ref{fig:ECGshape}. The SSWT would not be suitable to address the general mode decomposition problem in these circumstances because the well-separation condition for the SSWT is impractical for the following two reasons.
\begin{enumerate}
\item The superposition of two nearby Fourier expansion terms $\widehat{s_k}(n)\alpha_k(t) e^{2\pi i n N_k \phi_k(t)}$ and $\widehat{s_k}(n+1)\alpha_k(t) e^{2\pi i (n+1) N_k \phi_k(t)}$ are not well-separated when $n$ is large (see Figure \ref{fig:ScaleResolution} for an example), due to the low resolution of wavelet transforms in the high frequency part of the time-frequency domain. 
\item For two different instantaneous frequencies $N_k \phi_k'(t)$ and $N_j \phi_j'(t)$, their multiples may have crossover frequencies with high probability.
\end{enumerate}

\begin{figure}
  \begin{center}
    \begin{tabular}{cc}
      \includegraphics[height=2.4in]{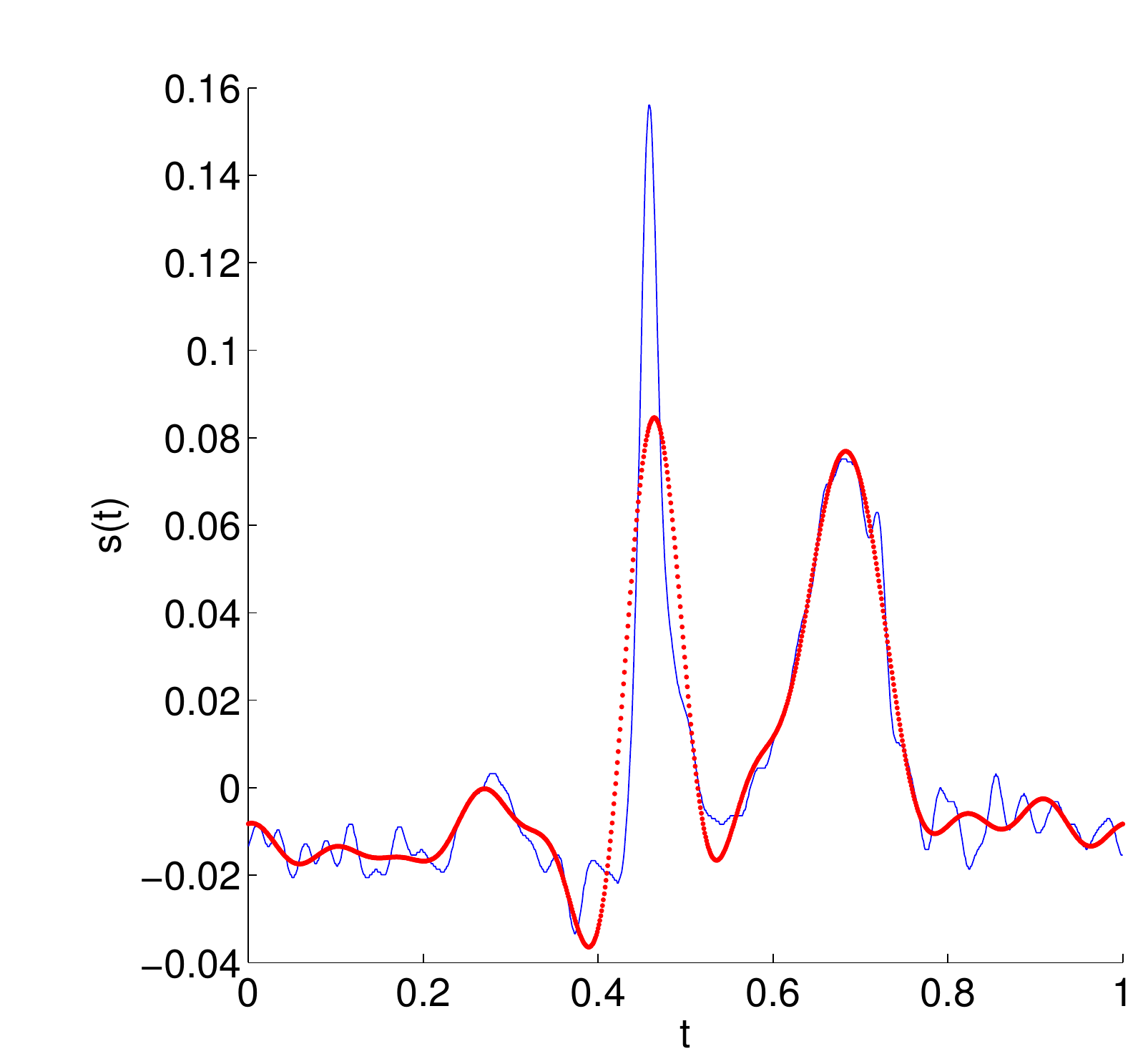} &
      \includegraphics[height=2.4in]{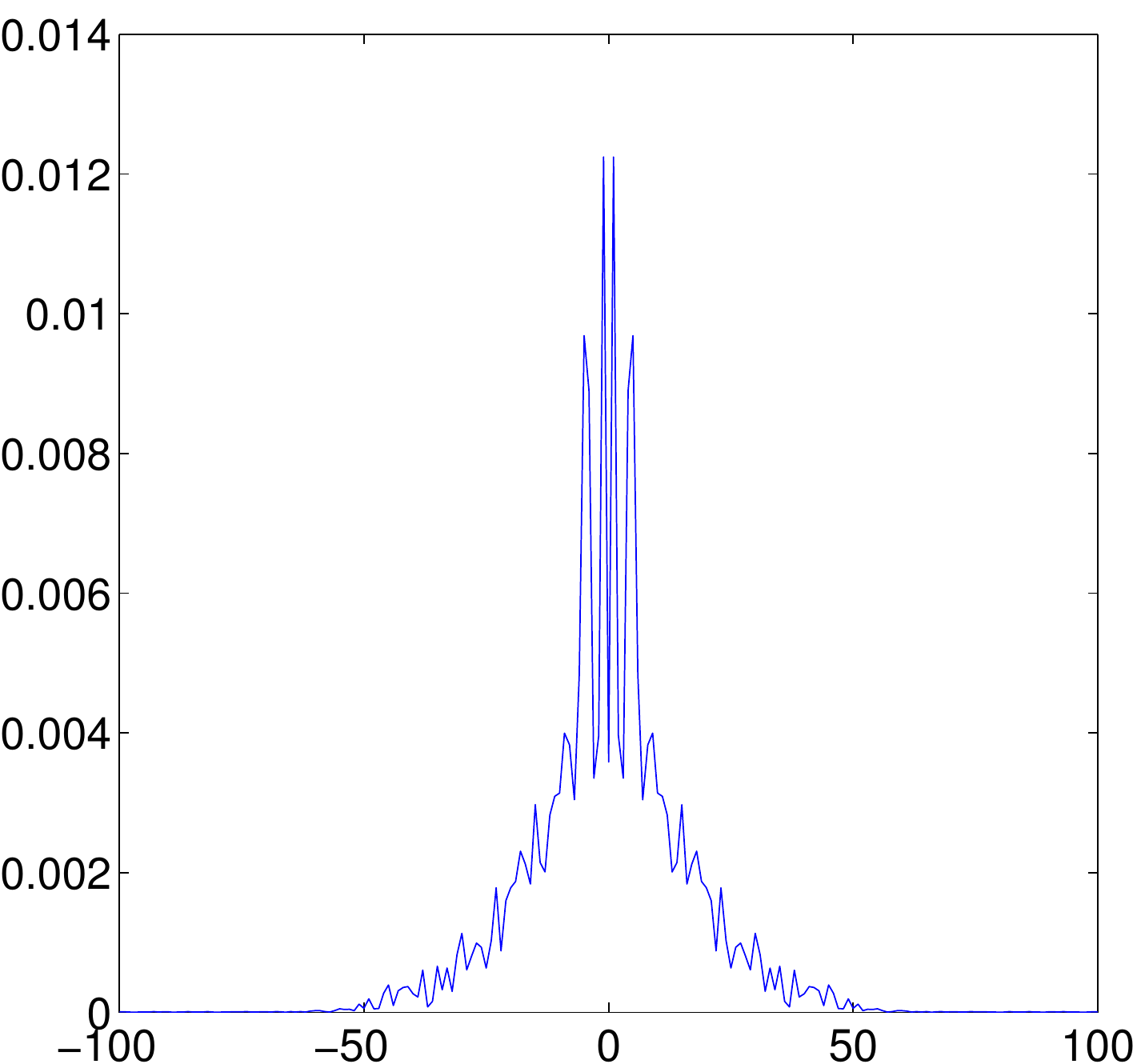}
    \end{tabular}
  \end{center}
  \caption{Left: The solid line plots a spike shape function $s(t)$ of a real ECG signal and the dotted line is its band-limited approximation $\sum_{|n|\leq 10} \widehat{s}(n) e^{2\pi i n t}$. The sum of a few Fourier expansion terms cannot approximate the shape function accurately. Most importantly, the highest peak, the key quantity called R-peak in \cite{goldberger06}, is smoothed and is hardly distinguished. Right: The Fourier power spectrum $|\widehat{s}(\xi)|$ of $s(t)$ is plotted. The energy in the Fourier domain is spreading widely.}
  \label{fig:ECGshape}
\end{figure}

\begin{figure}
  \begin{center}
    \begin{tabular}{c}
      \includegraphics[height=1in]{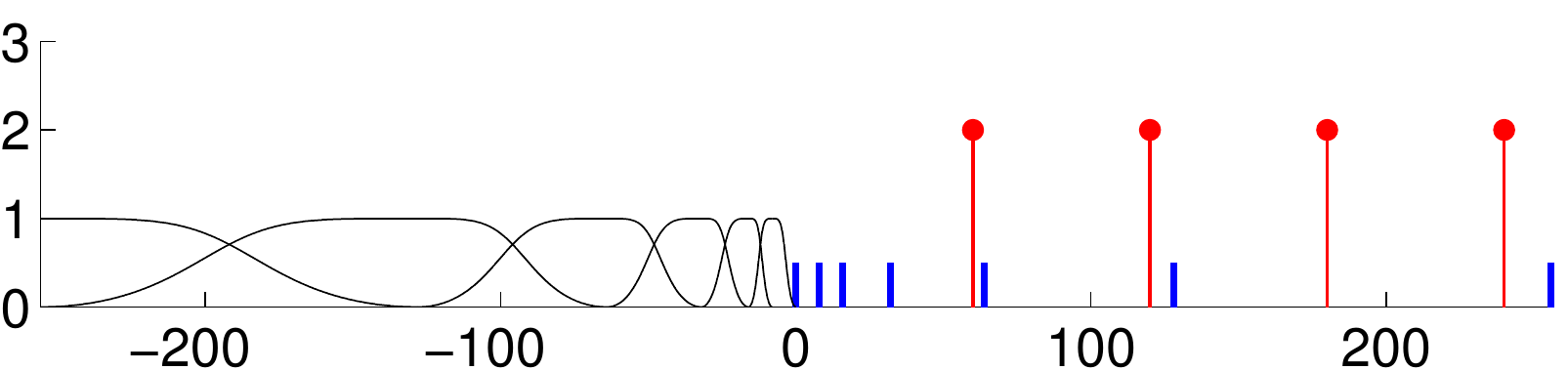}\\
      \includegraphics[height=1in]{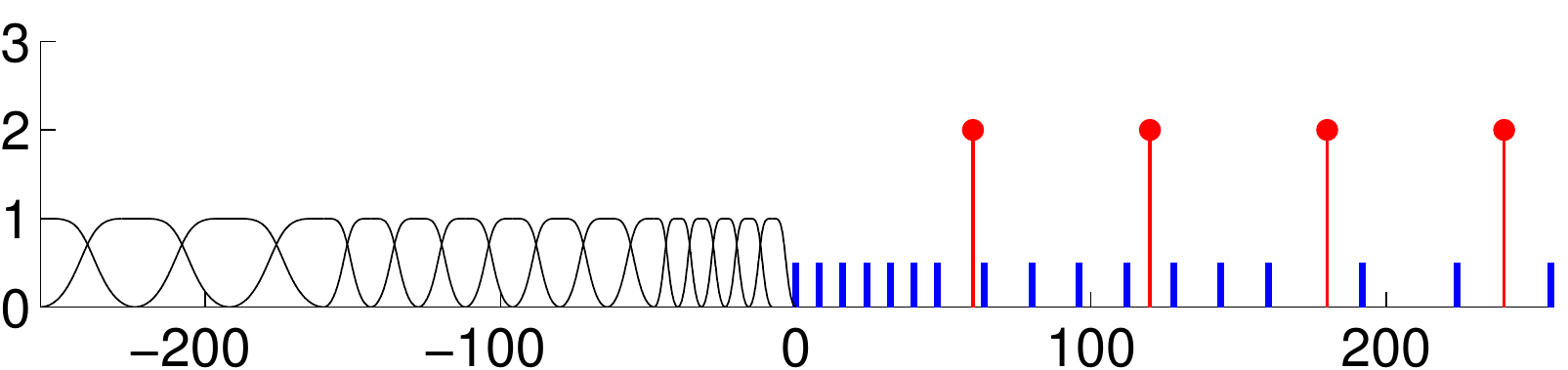}
    \end{tabular}
  \end{center}
  \caption{Top: Wavelet tiling and bump functions in the high frequency Fourier domain. The mother wavelet has a support of size $d=1$. Each short budding in the positive part indicates the center of one bump function, while the bump functions in the negative part are plotted. The dots denote the support of the Fourier transform of $\sum_{n=1}^4e^{2\pi inNt}$, where $N=60$. The well-separation condition for the SSWT is not satisfied, because the wavelet transform of these modes are overlapping in the time-frequency domain. Bottom: Wave packet tiling and bump functions with $d=1$. The well-separation condition holds.}  
  \label{fig:ScaleResolution}
\end{figure}

One possible idea to address the first problem might be to apply the SSSTFT in \cite{SSSTFT}. The SSSTFT has a much weaker requirement for the well-separation condition and it seems to have a much better resolution to distinguish intrinsic mode type functions with high frequencies. However, the SSSTFT may not be suitable to estimate instantaneous frequencies $nN_k \phi_k'(t)$ accurately when $nN_k$ is large. First of all, the resolution parameter $\alpha\geq O(nN_k)$ in \cite{SSSTFT} is large, resulting in a large error bound of instantaneous frequency estimates. Second, when $nN_k$ is large, $nN_k\phi''_k(t)$ is large, which may lead to an almost zero synchrosqueezed STFT of the component $\widehat{s_k}(n)\alpha_k(t) e^{2\pi i n N_k \phi_k(t)}$. In this case, it is difficult to estimate $nN_k\phi'_k(t)$ and to recover $\widehat{s_k}(n)\alpha_k(t) e^{2\pi i n N_k \phi_k(t)}$.

The above problems motivate the design of 1D synchrosqueezed wave packet transforms (SSWPT) and a diffeomorphism based spectral analysis method (DSA). First, the SSWPT has a better resolution to distinguish high frequency harmonic modes than the SSWT and provides more accurate instantaneous frequency estimate than the SSSTFT. Hence, the SSWPT is a good alternative to solve the general mode decomposition problem of the form \eqref{P2}. Second, under the weak well-separation condition that all the general components have at least one term in their Fourier expansions well-separated in the time-frequency domain, the SSWPT can estimate the instantaneous frequency information of each general component. Third, the DSA method can overcome the resolution problem and the crossover frequency problem in existing synchrosqueezing methods once sufficient information is provided by the SSWPT. The DSA method consists of diffeomorphisms and a short-time Fourier transform (in practice, the Fourier transform is applied if $f(t)$ is defined only in a bounded interval). It is capable of decomposing a wide class of general superpositions accurately.

\subsection{Related work}
There are three other research lines to address the mode decomposition problems of the form \eqref{P1}. The first one is the empirical mode decomposition (EMD) method initialized by Huang et al. in \cite{Huang1998} and refined in \cite{Huang2009}. To improve the noise resistance of the EMD methods, some variants have been proposed in \cite{Hou2009,Wu2009EEMD}. It has been shown that the EMD methods can decompose some signals into more general components of the form \eqref{P2} instead of the form \eqref{P1} in some cases (see Figure \ref{fig:EEMD} left) in \cite{PhysicalAnal}. In this sense, the EMD methods are able to reflect the nonlinear evolution of the physically meaningful oscillations using general shape functions. However, this advantage is not stable and consistent as illustrated in Figure \ref{fig:EEMD} right. It is worth more effort to understand the EMD methods on general mode decompositions.

\begin{figure}[ht!]
  \begin{center}
    \begin{tabular}{cc}
      \includegraphics[height=2.8in]{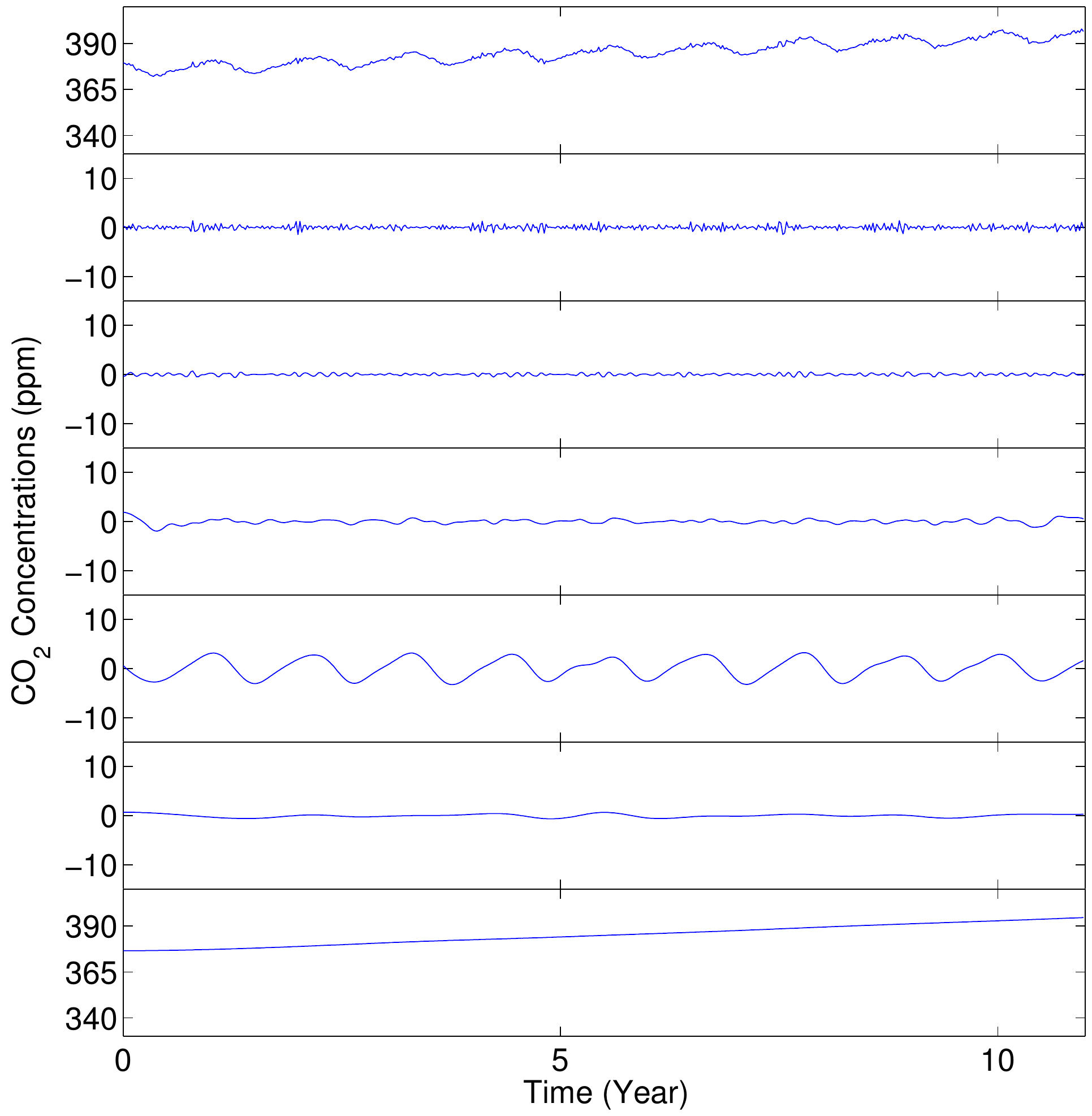} &
\includegraphics[height=2.8in]{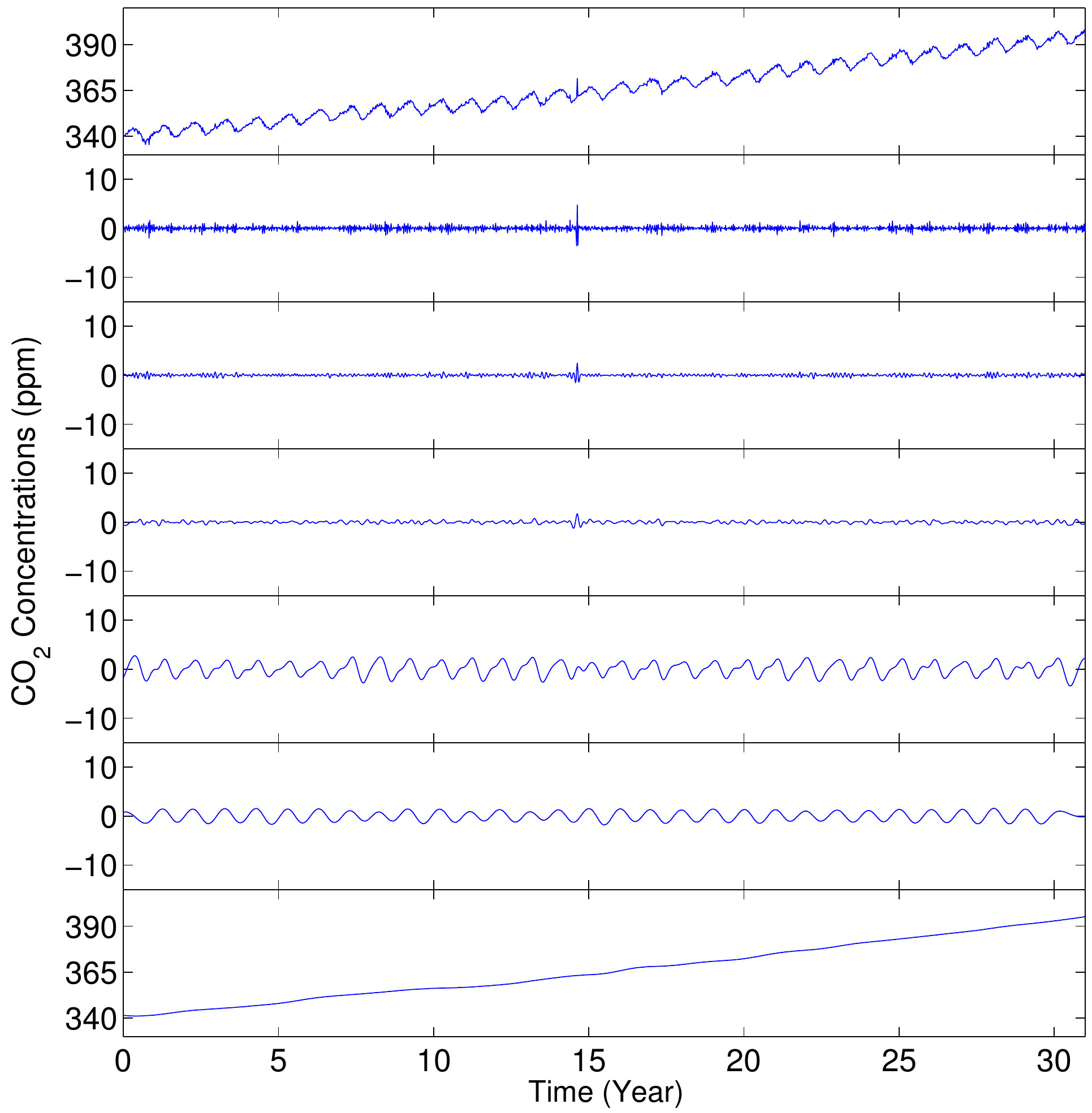} 
    \end{tabular}
  \end{center}
  \caption{Left: Decomposition results of recent $11$ years (2001-2011) CO$_2$ concentration at MLO by EEMD. Right: The results of recent $31$ years (1981-2011). The first row of each table shows the original data and the components provided by the EEMD method in \cite{Wu2009EEMD} are plotted in an order of decreasing frequencies below the original data. Left: The EEMD method provides a single general component. Right: The EEMD method provides two annual cycles, contrary to the result of $11$ years data.}
  \label{fig:EEMD}
\end{figure}

By extracting the components one-by-one from the most oscillatory one, Hou and Shi proposed a nonlinear optimization scheme to decompose signals. The first model in \cite{Hou2011} is based on nonlinear $TV^3$ minimization, which is computationally costly. To deal with this problem, the second paper \cite{Hou2012} proposed a nonlinear matching pursuit model based on sparse representations of signals in a data-driven time-frequency dictionary, which has a fast algorithm for periodic data. Under some sparsity assumptions, the analysis of convergence for the latter scheme has been recently studied in \cite{Hou2013}.

The third method is the empirical wavelet transform recently proposed in \cite{EWT,2DEWT} by Gilles, Tran and Osher, which empirically builds a wavelet filter bank according to the energy distribution of a given signal in the Fourier domain so as to obtain an adaptive time-frequency representation.

The rest of this paper is organized as follows. In Section \ref{sec:implementation}, the 1D SSWPT and the DSA method are briefly introduced by providing a simple example. In Section \ref{sec:SSWPT},  main theorems for 1D SSWPT to solve the general mode decomposition problem are presented. In Section \ref{sec:diff}, the DSA method is theoretically analyzed. In Section \ref{sec:results}, some synthetic and real examples are provided to demonstrate the efficiency of the above two methods. Finally, we conclude with some discussions of future work in Section \ref{sec:conclusion}.

\section{Implementation of proposed methods}
\label{sec:implementation}
\subsection{1D synchrosqueezed wave packet transforms (SSWPT)}
\label{sub:1DSSWPT}
In what follows, we briefly introduce the 1D SSWPT based on the 2D SSWPT in \cite{SSWPT}. Let $w(t)$ be a mother wave packet in the Schwartz class and the Fourier transform
$\widehat{w}(\xi)$ is a real-valued, non-negative, smooth function
with a support equal to $(-d,d)$ determined by a parameter $d\leq 1$. We can use $w(t)$ to define a family of wave packets through scaling, modulation, and translation, controlled by a geometric parameter $s$.

\begin{definition}
  \label{def:WA2d}
  Given the mother wave packet $w(t)$ and the parameter $s\in(1/2,1)$,
  the family of wave packets $\{w_{a b}(t): |a|\geq 1,b\in \R\}$
  is defined as
  \[
  w_{a b}(t)=|a|^{s/2} w(|a|^s(t-b)) e^{2\pi i (t-b)a},
  \]
  or equivalently, in the Fourier domain as
  \[
  \widehat{w_{ab}}(\xi) = |a|^{-s/2} e^{-2\pi i b\xi}
  \widehat{w}(|a|^{-s}(\xi-a)).
  \]
\end{definition}
Notice that if $s$ were equal to $1$, these
functions would be qualitatively similar to the standard wavelets. On the other hand, if $s$ were equal to $1/2$, we would obtain the wave atoms defined in \cite{Demanet2007}. But $s\in(1/2,1)$ is essential as we shall see in the main theorems.

The instantaneous frequency of the low frequency part is not well defined as discussed in \cite{Picinbono97}. For this reason, it is enough to consider the wave packets with $|a|\geq 1$. The high frequency modes can be identified and extracted independently of the low frequency part so that the low frequency part can be recovered by removing high frequency modes.
%
\begin{definition}
  \label{def:WAT}
  The 1D wave packet transform of a function $f(t)$ is a function
  \begin{align}
    W_f(a,b) 
    &= \langle w_{a b},f\rangle =  \int \overline{w_{a b}(t)}f(t)dt \label{eq:WAT} \\ 
    &= \langle \widehat{w_{ab}},\widehat{f}\rangle = \int \overline{\widehat{w_{ab}}(\xi)} \widehat{f}(\xi)d\xi \nonumber
  \end{align}
  for $|a|\geq 1,b\in \R$.
\end{definition}
For $f\in L^2(\R)$, if the Fourier transform $\widehat{f}(\xi)$ vanishes for $|\xi|< 1$, it is
easy to check that the $L^2$ norms of $W_f(a,b)$ and $f(t)$ are
equivalent, i.e., $\exists c_1$ and $c_2$ such that $0<c_1<c_2<\infty$ and
\begin{equation}
c_1\int |f(t)|^2 dt\leq  \int |W_f(a,b)|^2 da db \leq c_2\int |f(t)|^2 dt.  \label{eq:ENEEQ}
\end{equation}

\begin{definition} 
  \label{def:IF}
  Instantaneous frequency information function:

  Let $f\in L^\infty(\R)$. The instantaneous frequency information function of $f$ is defined by
  \begin{equation}
    v_f(a,b)=
    \begin{cases}
      \frac{ \partial_b W_f(a,b) }{ 2\pi i W_f(a,b)},
      & \text{for }|W_f(a,b)|>0;\\
      \infty, 
      & otherwise.
    \end{cases}
    \label{E1single}
  \end{equation}
\end{definition}

It will be proved that, for a class of wave-like functions $f(t) = \alpha(t) e^{2\pi i N\phi(t)}$, $v_f(a,b)$ precisely approximates $N\phi'(b)$ independently of $a$ as long as $W_f(p,b)\neq 0$. Hence, if we squeeze the coefficients $W_f(a,b)$ together based upon the same instantaneous frequency information function $v_f(a,b)$, then we would obtain a sharpened time-frequency representation of $f(t)$. This motivates the definition of the synchrosqueezed energy distribution as follows.

\begin{definition}
  Given $f(t)$, $W_f(a,b)$, and $v_f(a,b)$, the synchrosqueezed energy
  distribution $T_f(v, b)$ is defined by
  \begin{equation}
    T_f(v,b) = \int_{\R} |W_f(a,b)|^2 \delta(\Re{v_f(a,b)}-v) da \label{eq:SED}
  \end{equation}
  for $v,b\in \R$.
\end{definition} 

\begin{figure}[ht!]
  \begin{center}
    \begin{tabular}{cccc}
     \includegraphics[height=1.3in]{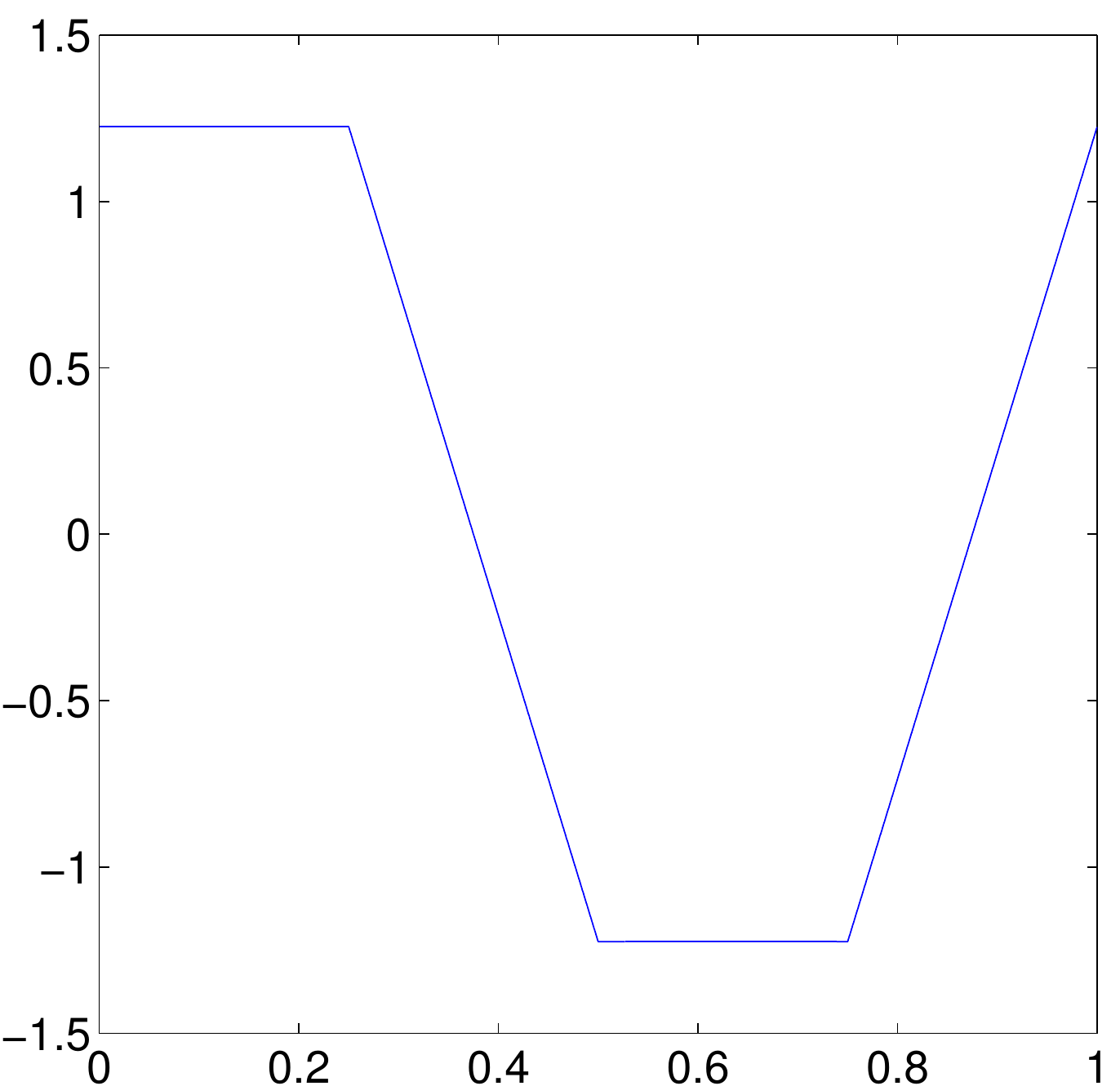} & \includegraphics[height=1.3in]{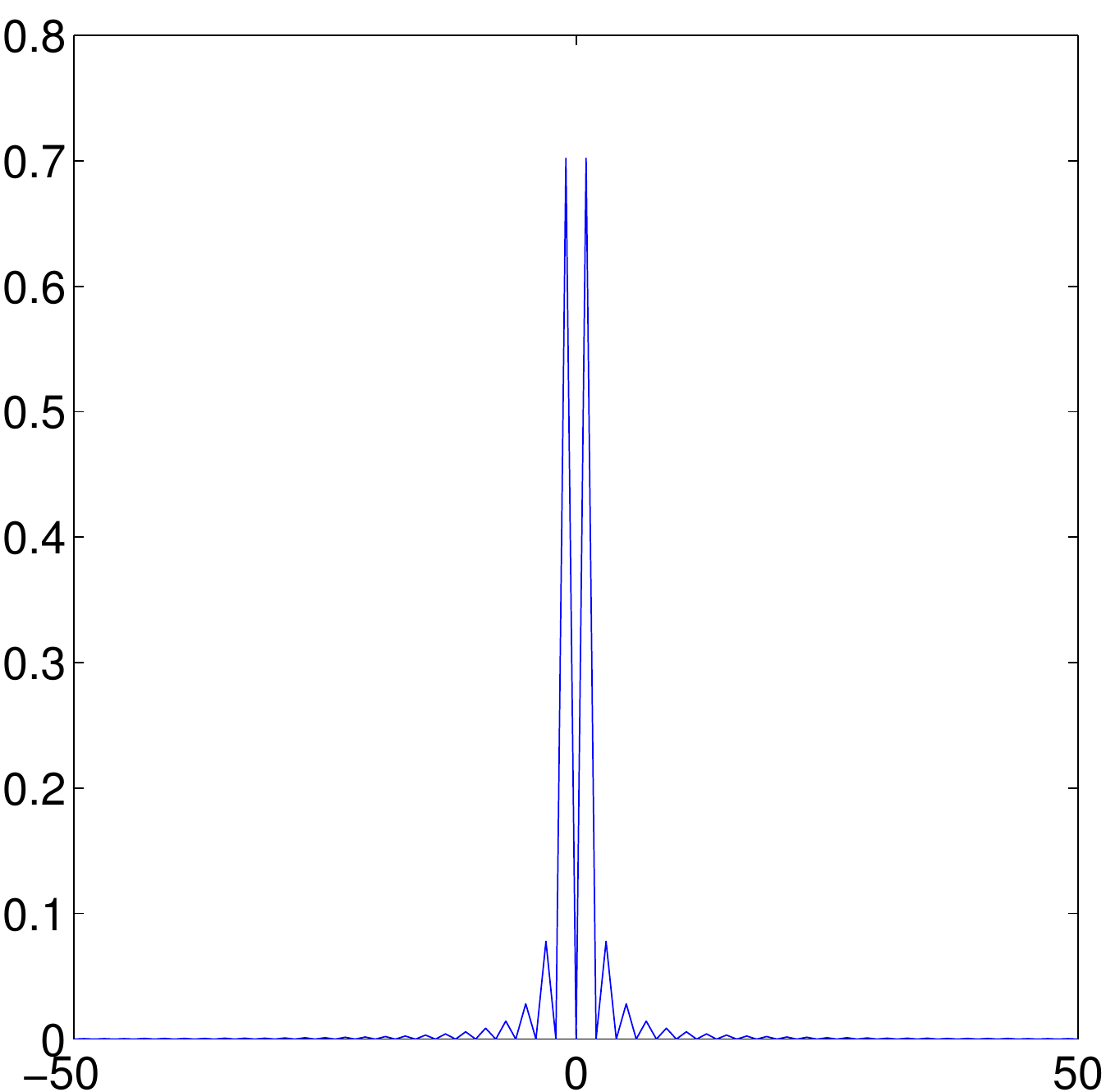}  & \includegraphics[height=1.3in]{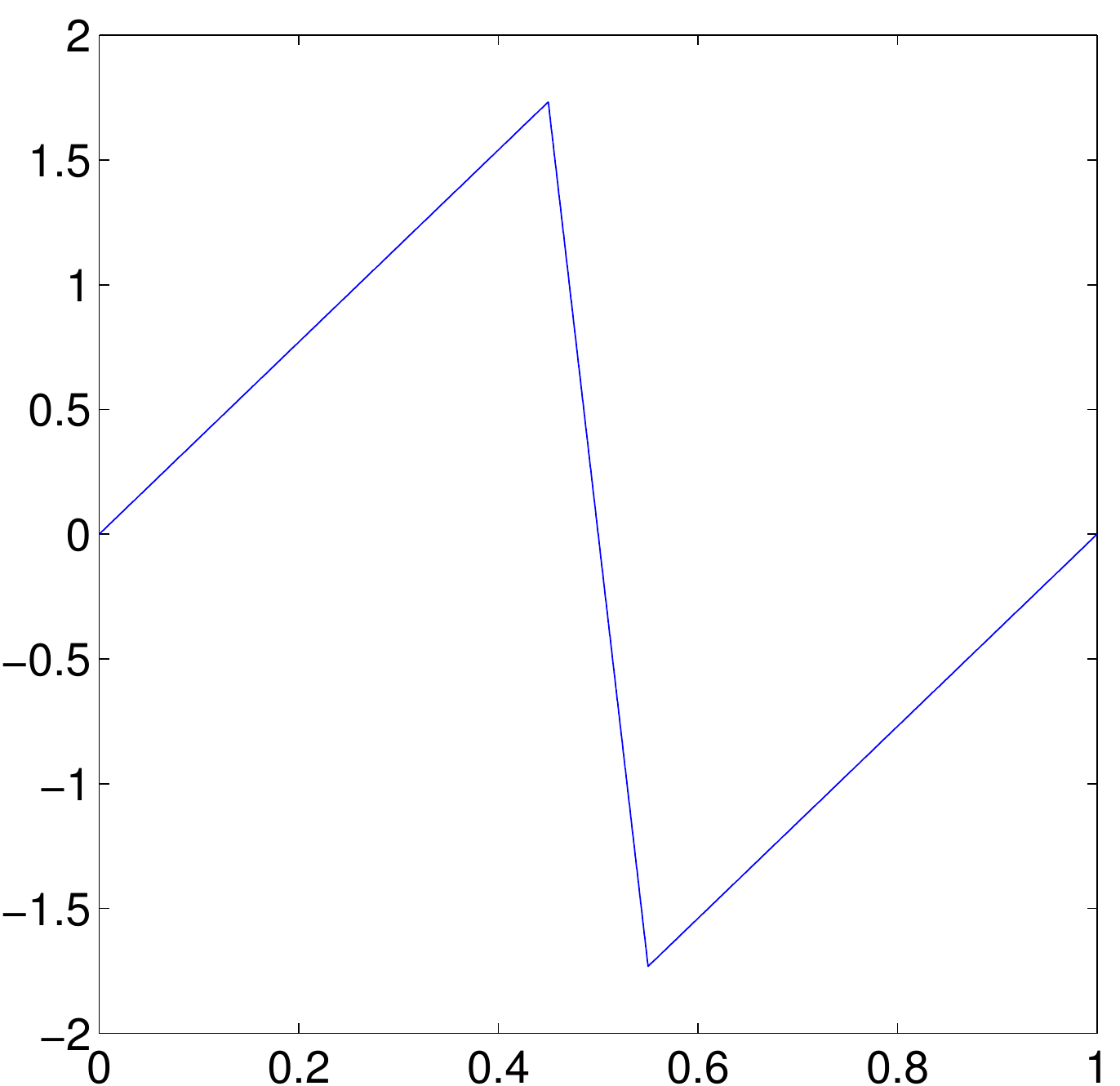}&\includegraphics[height=1.3in]{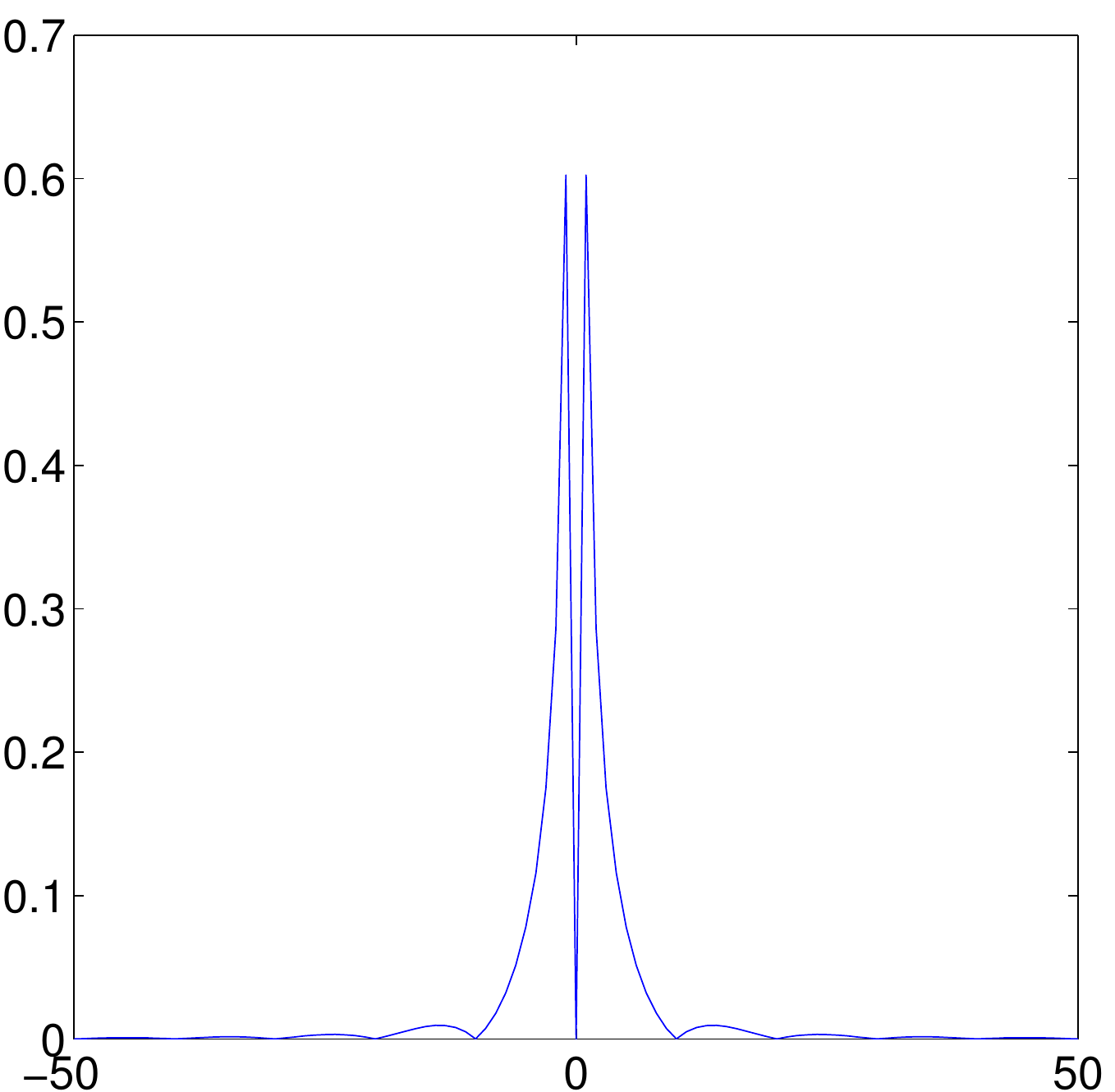}
    \end{tabular}
 \begin{tabular}{c}
      \includegraphics[height=1in]{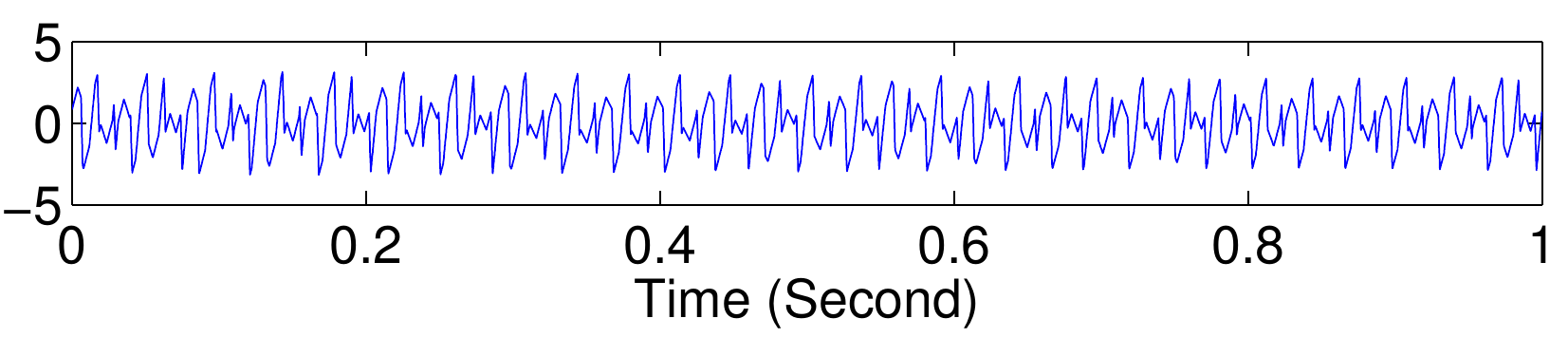}  
    \end{tabular}
  \end{center}
  \caption{Top left: The general shape function $s_1(t)$ and its spectral energy $|\widehat{s_1}(\xi)|$. Top right: The general shape function $s_2(t)$ and its spectral energy $|\widehat{s_2}(\xi)|$. Bottom: A superposition of general modes generated by using $s_1(t)$ and $s_2(t)$.}  
  \label{fig:s1s2}  
\end{figure}

For a multi-component signal $f(t)$, the synchrosqueezed energy of each component will concentrate around its corresponding instantaneous frequency. Hence, the SSWPT can provide information about their instantaneous frequencies. 

With the definition of the SSWPT above, it is ready to explain how it solves the general mode decomposition problem of the form \eqref{P2} under well-separation condition. We denote this method as the GMDWP method for short. Let us consider the \textbf{Example $1$}:
\[
f_1(t) = \alpha_1(t)s_1(2\pi N_1\phi_1(t))= (1+0.05\sin(4\pi x))s_1\left(120\pi(x+0.01\sin(2\pi x))\right)
\] 
and
\[
f_2(t) = \alpha_2(t)s_2(2\pi N_2\phi_2(t))= (1+0.1\sin(2\pi x))s_1\left(180\pi(x+0.01\cos(2\pi x))\right),
\] 
where $s_1(t)$ and $s_2(t)$ are periodic general shape functions defined in $[0,1]$ as shown in Figure \ref{fig:s1s2}. Let $f(t)=f_1(t)+f_2(t)$ (see Figure \ref{fig:s1s2} bottom) and we try to recover $f_1(t)$ and $f_2(t)$ from $f(t)$.

\begin{figure}[ht!]
  \begin{center}
    \begin{tabular}{ccc}
     \includegraphics[height=1.6in]{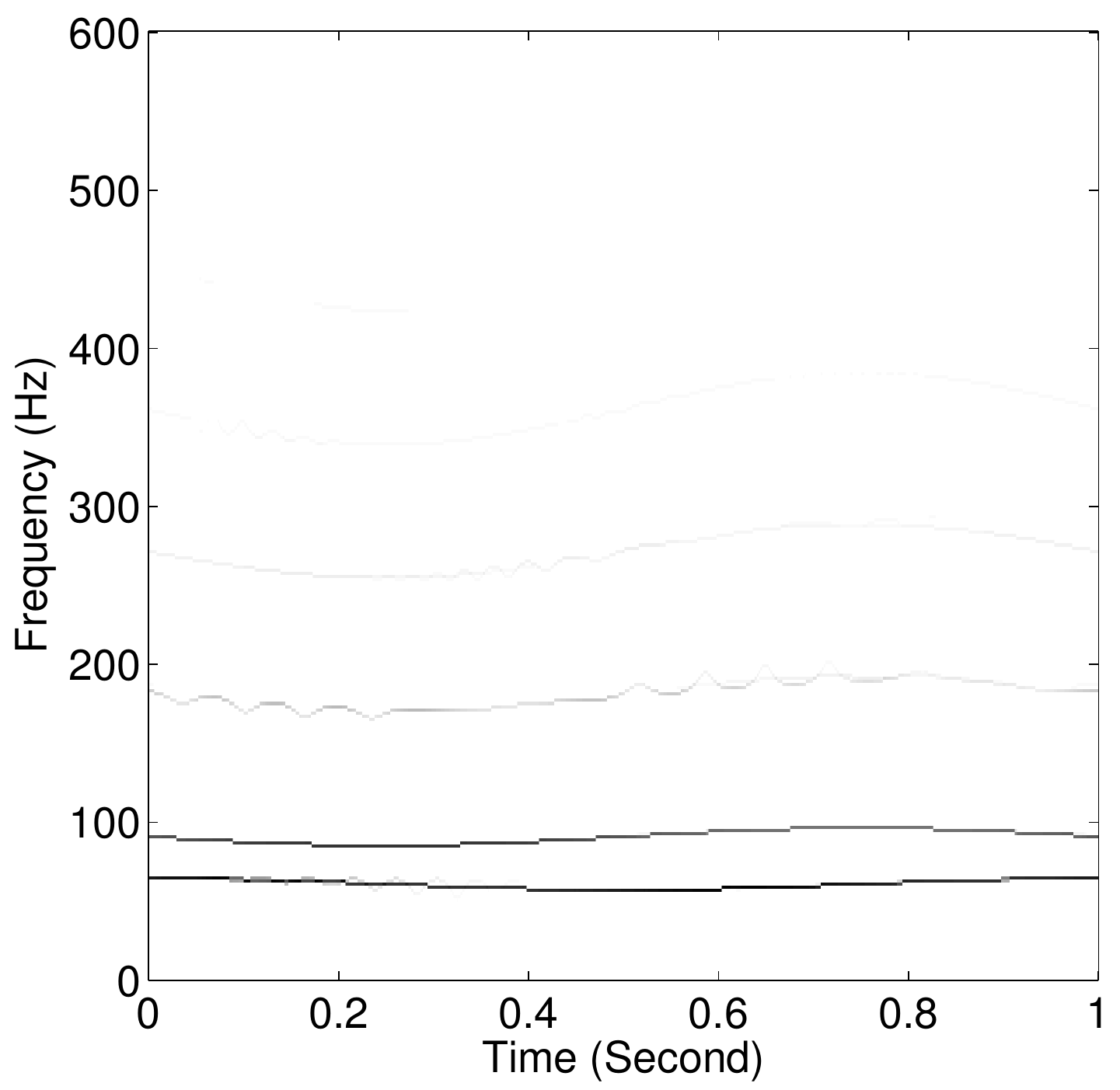}& \includegraphics[height=1.6in]{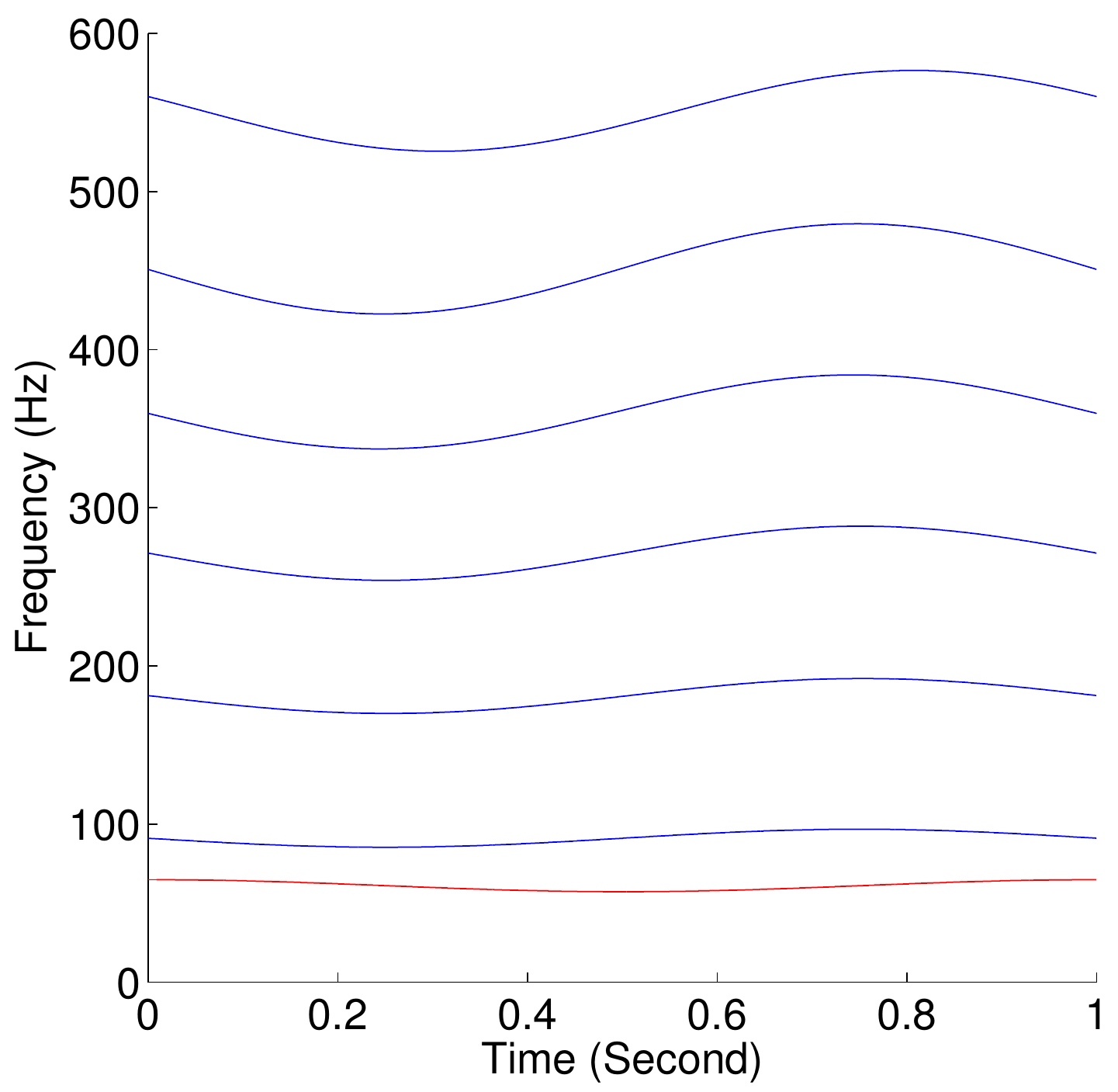} & \includegraphics[height=1.6in]{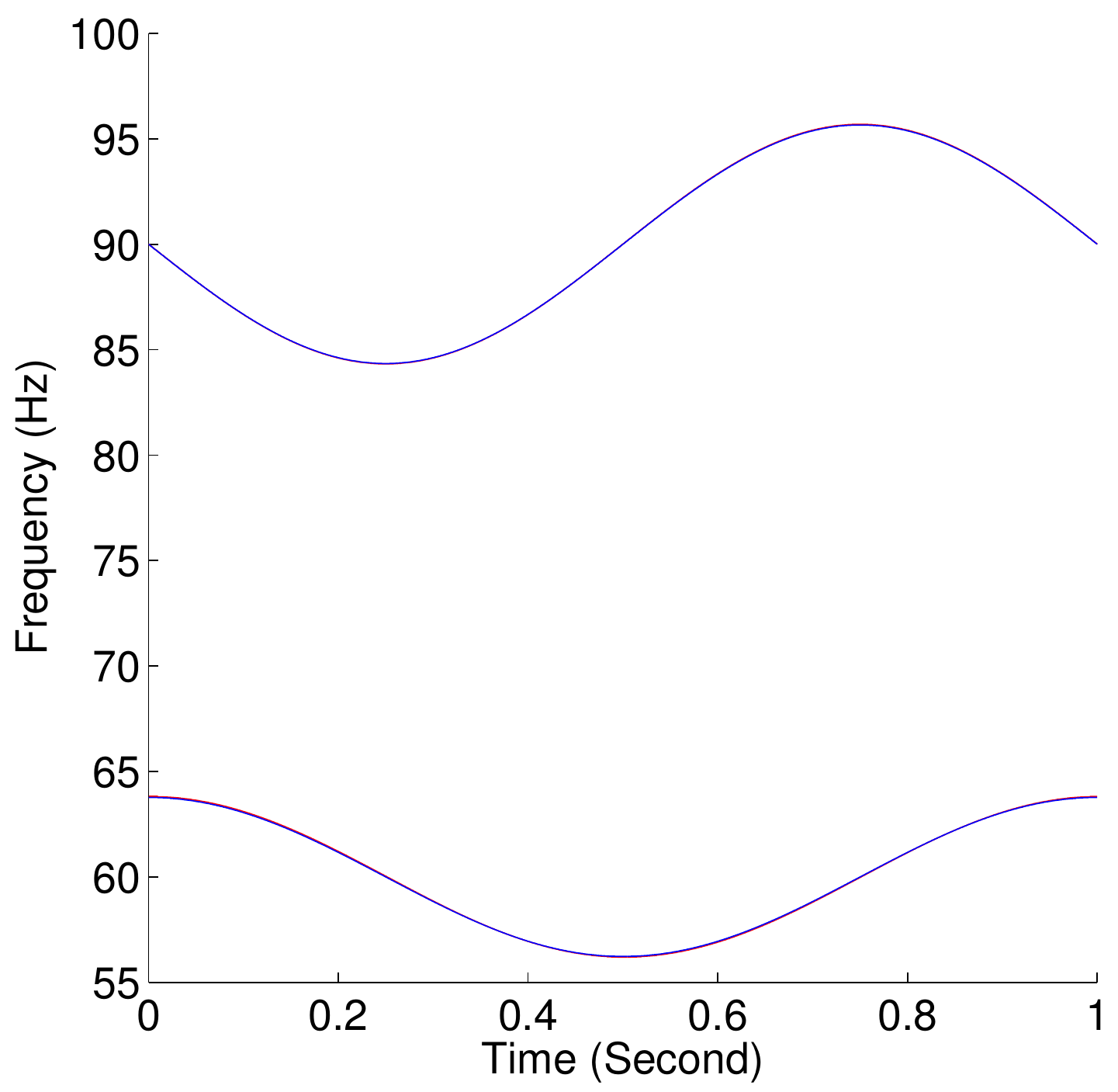}
    \end{tabular}
  \end{center}
  \caption{Left: The synchrosqueezed energy distribution of $f(t)$. The Fourier expansion terms with strong energy are well-separated. Middle: The instantaneous frequency estimates $\psi_{kn}(b)$ of $nN_k\phi_k'(b)$ and the result of curve classification as indicated by different colors. Right: The red curves are the instantaneous frequency estimates $\tilde{\psi}_k$ and the blue curves are the real instantaneous frequencies $N_k\phi_k'(b)$.}
\label{fig:s1s2freq}
\end{figure}

\textbf{Step 1:} Apply the SSWPT on $f(t)$ to compute the synchrosqueezed energy distribution $T_f(v,b)$. The essential support of $T_f(v,b)$ (the support above a certain level) is separated into essentially disjoint sets $\{S_{kn}\}$ as shown in Figure \ref{fig:s1s2freq} left. Each set $S_{kn}$ corresponds to one term in the Fourier expansion of one general mode, say, $\widehat{s_k}(n)\alpha_k(t) e^{2\pi i n N_k \phi_k(t)}$. 

\textbf{Step 2:} As  we shall see in Theorem \ref{thm:main}, those points in $S_{kn}$ are concentrating around $n N_k \phi_k'(t)$. This is also illustrated by Figure \ref{fig:s1s2freq} left. Since each set $S_{kn}$ is well separated from other sets,  by applying the clustering method in \cite{SSWPT}, one can identify each set $S_{kn}$.

\textbf{Step 3:} For each point $(v,b)\in S_{kn}$, $v\approx nN_k\phi_k'(b)$.  This motivates the definition of a weighted mean of the positions of the points in $S_{kn}$ as 
\[
\psi_{kn}(b)=\frac{\sum_{(v,b)\in S_{kn}}|T_f(v,b)|v}{\sum_{(v,b)\in S_{kn}}|T_f(v,b)|}.
\]
$\psi_{kn}(b)$ provides an accurate estimate of the instantaneous frequency $nN_k\phi_k'(b)$ as shown in Figure \ref{fig:s1s2freq} middle. In the presence of noise, $\psi_{kn}(b)$ will be disturbed by noise. Hence, some low-pass filter or smoothing process could be applied to $\psi_{kn}(b)$.

\textbf{Step 4:} Apply the curve classification Algorithm \ref{alg:curve} to identify $\{\psi_{kn}\}_{|n|\geq 1}$ for each $k$. In the case of Example $1$, there are two general modes. Hence, this step gives $\psi_{11}$ and $\{\psi_{2n}\}_{n=1}^6$. $\psi_{11}$ is plotted in red and $\{\psi_{2n}\}_{n=1}^6$ are plotted in blue in Figure \ref{fig:s1s2freq} middle.

\textbf{Step 5:} Since $\psi_{kn}\approx nN_k\phi_k'$, one arrives at a function $\tilde{\psi}_k$ as an estimation of the instantaneous frequency $N_k\phi_k'$ of $\alpha_k(t)s_k(2\pi N_k \phi_k(t))$ by applying the method in Theorem \ref{thm:inst2}. For Example $1$, this step gives $\tilde{\psi}_1$ and $\tilde{\psi}_2$ and they are shown in Figure \ref{fig:s1s2freq} right in red.

\textbf{Step 6:} Suppose $U_k = \bigcup_{n=-\infty}^{\infty} S_{kn}$, then each general mode can be recovered by 
\begin{align}
 f_k(t)= \alpha_k(t)s_k(2\pi N_k \phi_k(t)) = \int_{v_f(a,b) \in U_k} \tilde{w}_{ab}(t) W_f(a,b) dadb, \label{eq:REC}\nonumber
\end{align}
where the set of functions $\{ \tilde{w}_{ab}(t), |a|\in [1,\infty),b\in \R\}$ is
the dual frame of $\{w_{ab}(x),|a|\in [1,\infty),b\in\R\}$. This step can reconstruct two components $f_1(t)$ and $f_2(t)$ for Example $1$ using essential supports of $T_f(v,b)$. These two reconstructed components are shown in Figure \ref{fig:s1s2mode}.

\textbf{Step 7:} Since each Fourier expansion term can be recovered by 
\begin{align}
  \widehat{s_k}(n)\alpha_k(t) e^{2\pi i n N_k \phi_k(t)} = \int_{v_f(a,b) \in S_{kn}} \tilde{w}_{ab}(t) W_f(a,b) dadb, \nonumber
\end{align}
one can recover the instantaneous amplitude up to a constant factor by
\[
\alpha_k(t) \eqsim \tilde{\alpha}_k(t) =\sqrt{\sum_n| \widehat{s_k}(n)\alpha_k(t) e^{2\pi i n N_k \phi_k(t)}|^2}.
\]
This step gives two instantaneous amplitude estimates $\tilde{\alpha}_1(t)$ and $\tilde{\alpha}_2(t)$ for Example $1$. As shown in Figure \ref{fig:s1s2mode} left, the normalized $\tilde{\alpha}_k(t)$ can estimate the normalized $\alpha_k(t)$ accurately. Hence, the general shape function $s_k(t)$ can be recovered by
\[
s_k(t) \eqsim \frac{f_k((N_k\phi_k)^{-1}(\frac{t}{2\pi}))}{\tilde{\alpha}_k((N_k\phi_k)^{-1}(\frac{t}{2\pi}))},
\]
up to a constant factor.

\begin{figure}[ht!]
  \begin{center}
    \begin{tabular}{ccc}
\includegraphics[height=1.6in]{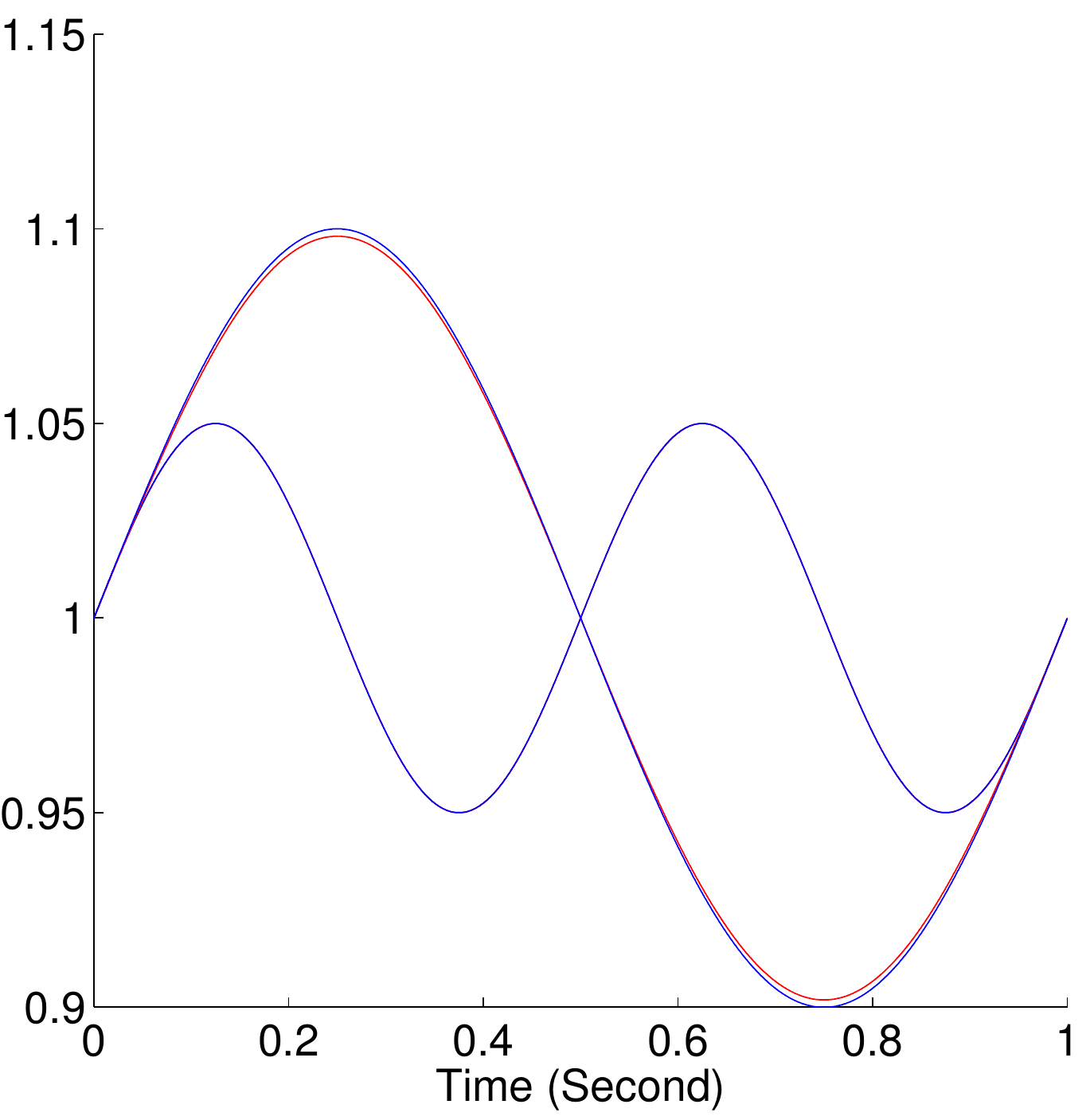} &\includegraphics[height=1.6in]{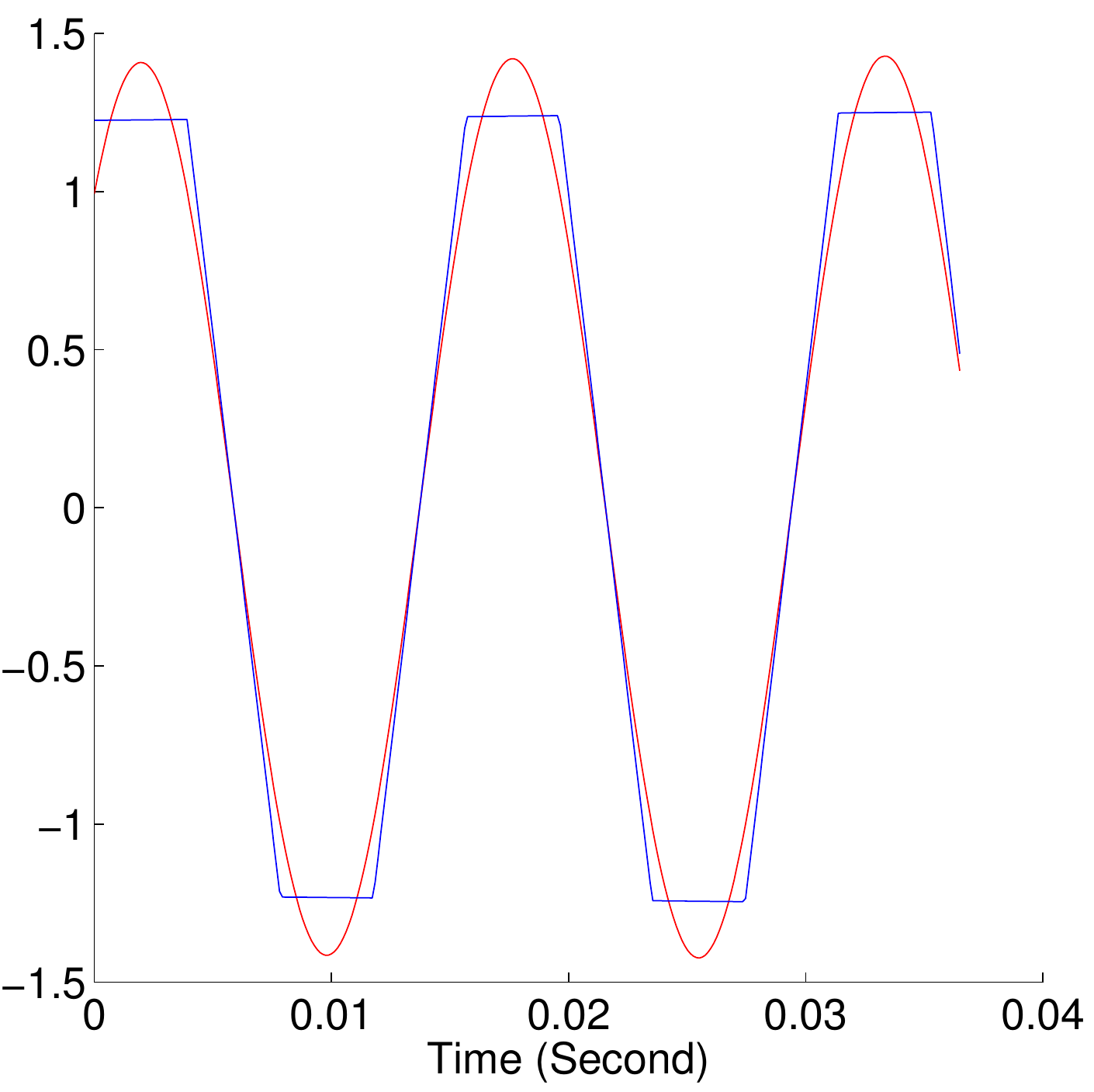} & \includegraphics[height=1.6in]{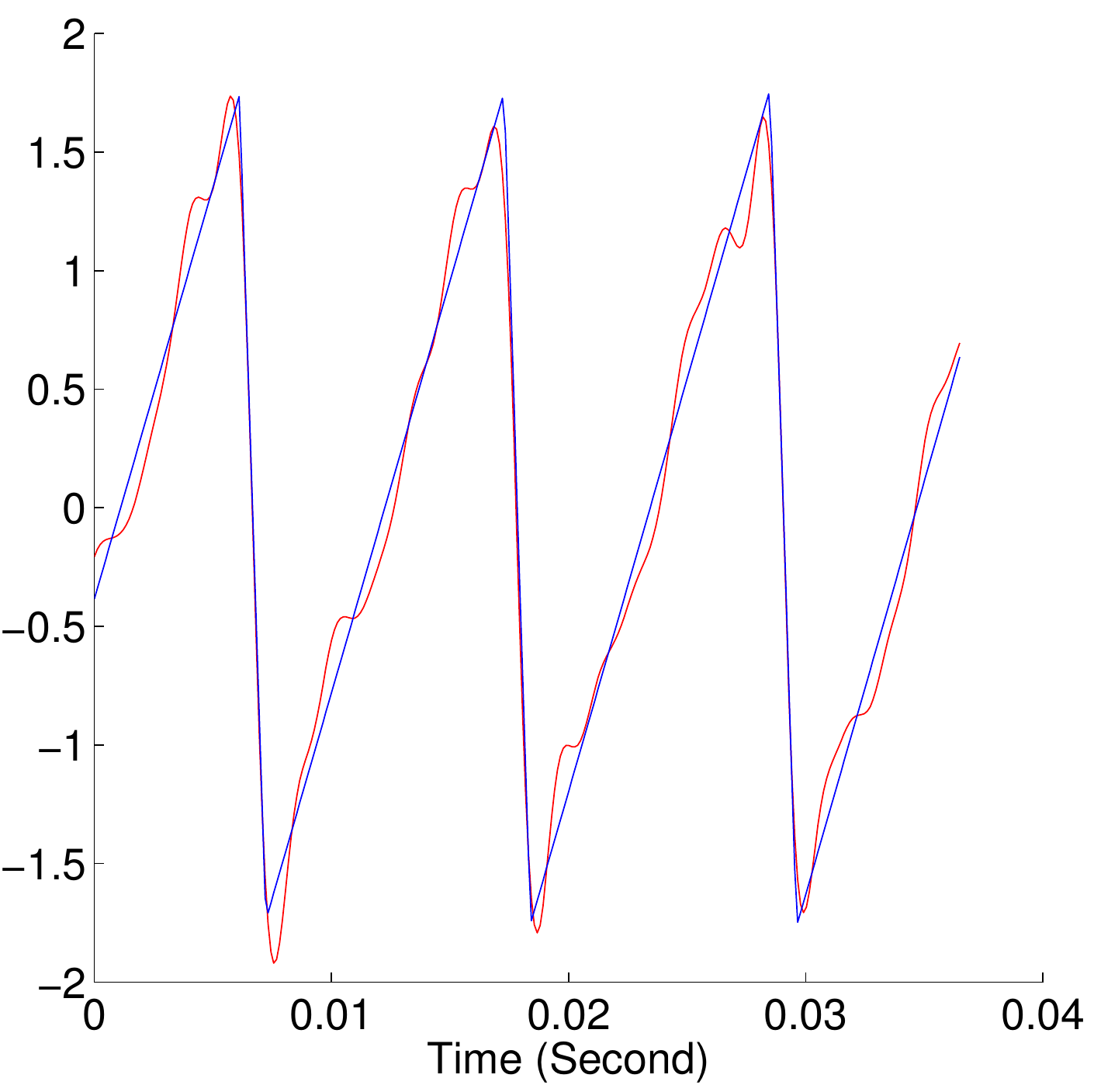}
    \end{tabular}
  \end{center}
  \caption{Blue: Real signals. Red: Reconstructed results. Left: Normalized instantaneous amplitude estimates $\tilde{\alpha}_k(t)$ in red and normalized instantaneous amplitudes $\alpha(t)$ in blue. Middle and right: The real general modes and the recovered general modes provided by the GMDWP method.}
\label{fig:s1s2mode}
\end{figure}

As we can see in the above example, the GMDWP method can provide accurate estimates of instantaneous frequencies and instantaneous amplitudes from the well-separated essential supports of $T_f(v,b)$. However, the reconstructed general modes are not satisfactory (see Figure \ref{fig:s1s2mode} middle and right). As Figure \ref{fig:s1s2log} shows, considering only the essential supports would ignore Fourier expansion terms with weak energy, the information of which is indispensable to reconstruct exact general modes. This desires the diffeomorphism based spectral analysis for exact reconstructions of general modes.

\begin{figure}[ht!]
  \begin{center}
    \begin{tabular}{cc}
\includegraphics[height=1.6in]{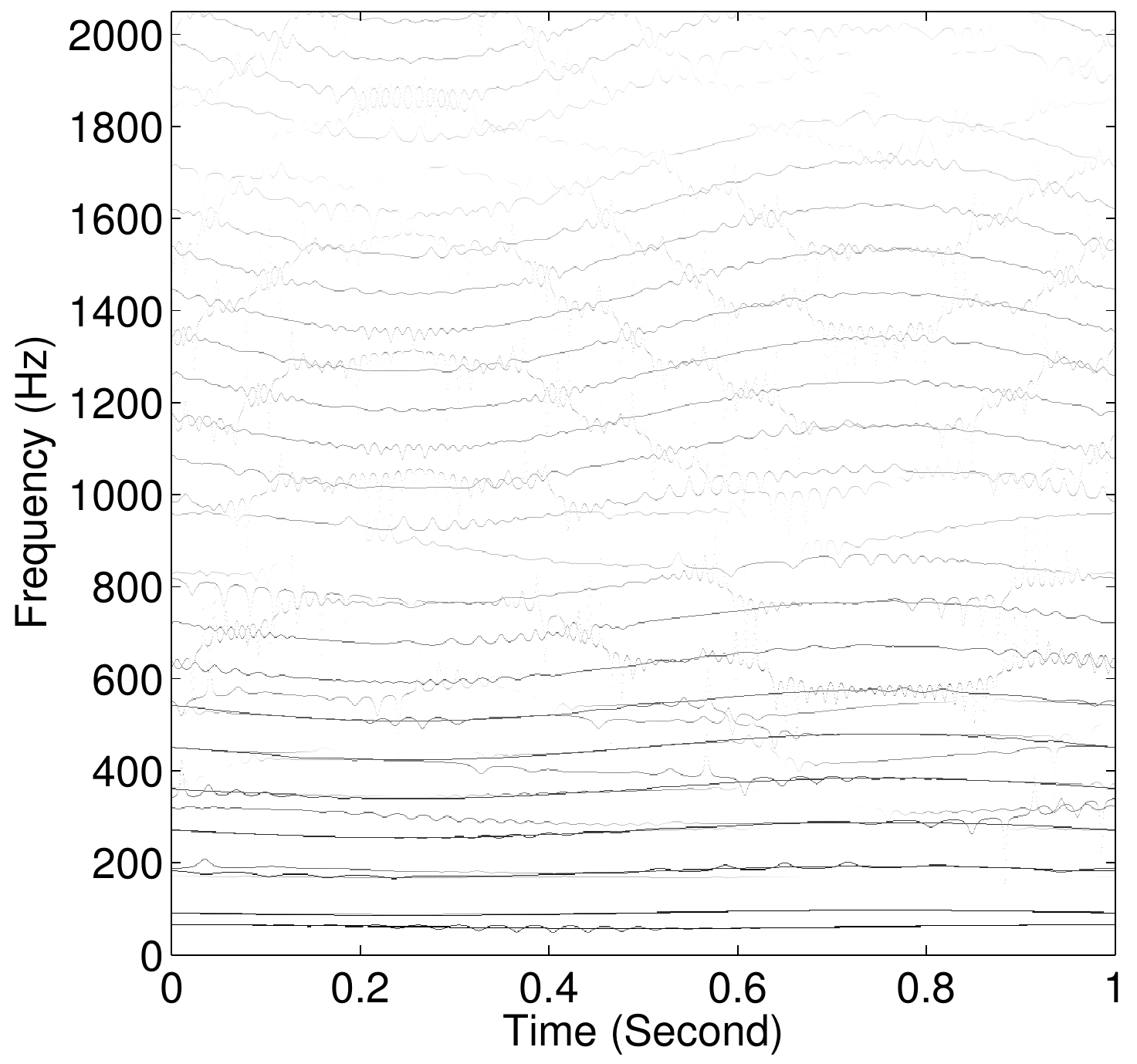} &\includegraphics[height=1.6in]{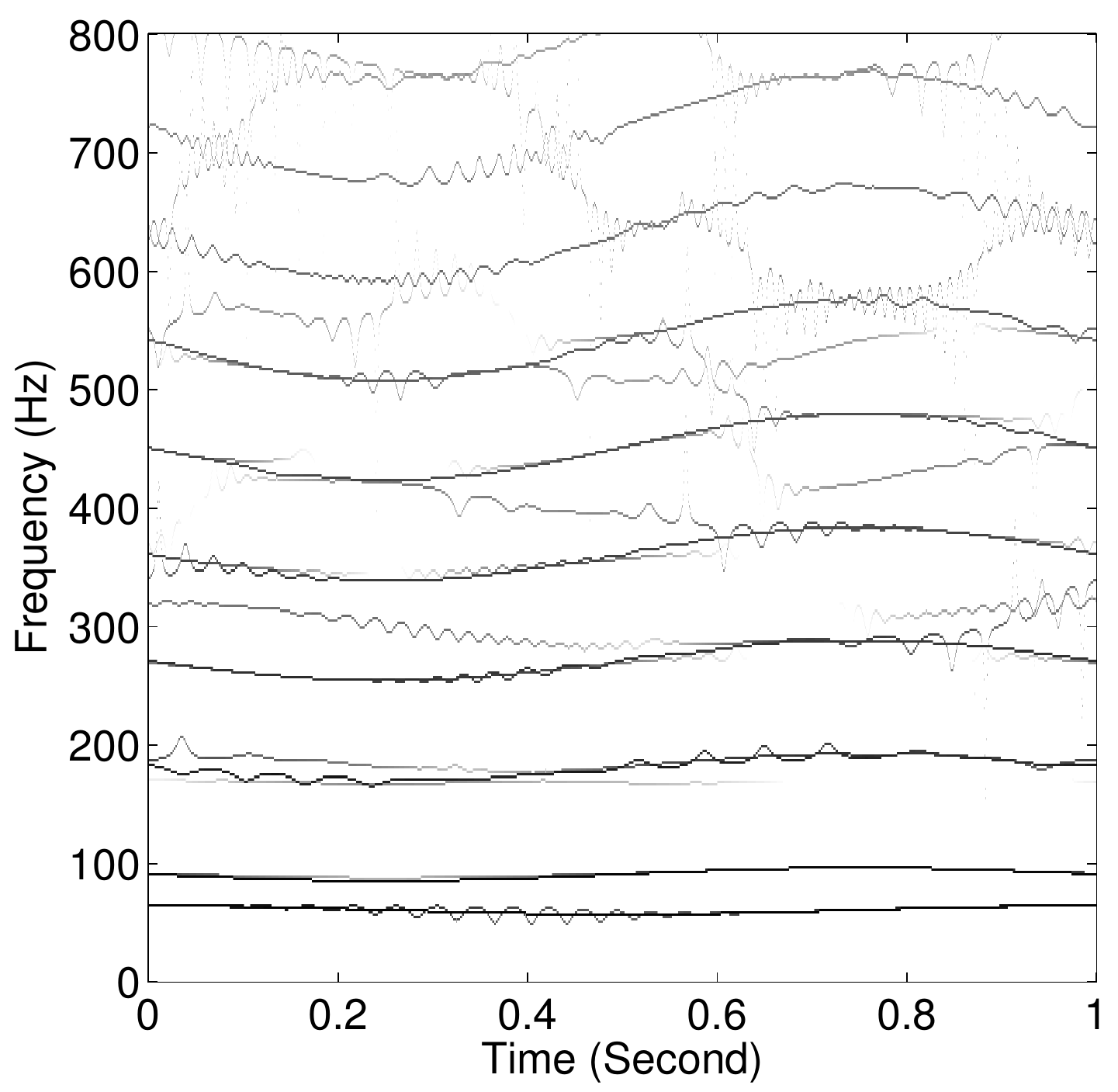} 
    \end{tabular}
  \end{center}
  \caption{Left: $\log_{10}(T_f(v,b))$ in the visible time-frequency domain. Right: $\log_{10}(T_f(v,b))$ in the low frequency part of the time-frequency domain. Some Fourier expansion terms with weak energy are interfering other terms.}
\label{fig:s1s2log}
\end{figure}

\subsection{Diffeomorphism based spectral analysis (DSA)}
\label{sub:diff}
As discussed previously, the well-separation condition, which assumes that the instantaneous frequencies $n N_k \phi_k'(t)$ of nonzero Fourier expansion terms (i.e. $\widehat{s_k}(n)\neq 0$) are well-separated from each other, is not practical. However, it is reasonable to assume that each general mode has at least one Fourier expansion term $\widehat{s_k}(n_k)\alpha_k(t) e^{2\pi i n_k N_k \phi_k(t)}$ with $n_k N_k\phi_k'(t)$ well-separated from other instantaneous frequencies, so that the SSWPT can estimate $n_k N_k\phi_k'(t)$ accurately. This is referred to as the weak well-separation condition. Indeed, we only need the well-separation of the Fourier expansion terms with strong energy as shown by the example in Figure \ref{fig:s1s2freq} left. In what follows, a diffeomorphism based spectral analysis method is introduced to identify all the nonzero Fourier expansion terms using the pre-estimated instantaneous frequencies $\{n_k N_k\phi_k'(t)\}_{k=1}^K$ and the instantaneous amplitudes $\{|\widehat{s_k}(n_k)|\alpha_k(t)\}_{k=1}^K$ provided by the GMDWP method.

Without loss of generality, let us assume the signal of interest is defined in $[0,1]$. Notice that the smooth function $\phi_k(t)$ has the interpretations of a warping in each general mode via a diffeomorphism $\phi_k:\R\to\R$. With the instantaneous frequencies $\{n_k N_k\phi_k'(t)\}_{k=1}^K$ available, we can therefore define the instantaneous phase profiles by
\[
  p_k(t) = \frac{1}{m_k}\int_0^t n_k N_k\phi_k'(x)dx,
\]
where $m_k=\frac{1}{2} \left(\max \limits_{t} {n_k N_k\phi_k'(t)}+\min \limits_{t}{n_k N_k\phi_k'(t)}\right)$. Because $p_k(t)$ is a smooth monotonous function, we can define the inverse-warping profiles in $[0,1]$ by
\begin{eqnarray*}
h_k(t)& =& \frac{f\circ p_k^{-1}(t)}{|\widehat{s_k}(n_k)|\alpha_k\circ p_k^{-1}(t)}\\
&=&\sum_{n=-\infty}^{\infty}\frac{\widehat{s_k}(n)}{|\widehat{s_k}{(n_k)}|} e^{2\pi i ( \frac{n m_k}{n_k}t +n N_k \phi_k(0) )}\\& & + \sum_{j\neq k} \sum_{n=-\infty}^{\infty} \frac{\widehat{s_j}(n)}{|\widehat{s_k}{(n_k)}|} \frac{\alpha_j\circ p_k^{-1}(t)}{\alpha_k\circ p_k^{-1}(t)} e^{2\pi i n N_j \phi_j\circ p_k^{-1}(t)}.
\end{eqnarray*}

If the diffeomorphisms $\phi_k:\R\to\R$ are well different and the phases $2\pi N_k \phi_k(t)$ are sufficiently steep in $[0,1]$, which will be clarified later, the Fourier transform of each inverse-warping profile $\widehat{h_k}(\xi)$ will have sheer peaks at $\xi = \frac{n m_k}{n_k}$ and will be relative small elsewhere. This motivates the design of the DSA method as follows.

\textbf{Step 1:} Input: A signal $f(t)$, its instantaneous phase profiles $\{p_k(t)\}_{k=1}^K$ and instantaneous amplitudes $\{|\widehat{s_k}(n_k)|\alpha_k(t)\}_{k=1}^K$.

\textbf{Step 2:} Initialize: Set up the initial residual $r(t)=f(t)$ and the tolerance $\epsilon$. Let $f_k(t)=0$ be the initial guess of the $k$th general mode and denote $S_k=\emptyset$ as the initial guess of the spectrum information of the $k$th general shape function $s_k$ for $k=1$, $\dots$, $K$.

\textbf{Step 3:} For $k=1$, $\dots$, $K$,  compute the inverse-warping profiles in $[0,1]$ by
\begin{eqnarray*}
h_k(t)& =& \frac{r\circ p_k^{-1}(t)}{|\widehat{s_k}(n_k)|\alpha_k\circ p_k^{-1}(t)}.
\end{eqnarray*}

\textbf{Step 4:} Apply the discrete Fourier transform on $h_k(t)$ in $[0,1]$ to obtain $\widehat{h_k}(\xi)$ for $k=1$, $\dots$, $K$ and solve the following optimization problem,
\[
(\tau,j)=\underset{(\xi,k)}{\arg\max} |\widehat{h_{k}}(\xi)|.
\]
Then $\tau\approx \frac{nm_j}{n_j}$ for some $n$ such that $\widehat{s_j}(n)\neq 0$.

\textbf{Step 5:} Let $g(t)=e^{2\pi i\tau t}$. Warp the harmonic $g(t)$ with the $j$th instantaneous phase profile $p_j(t)$ and multiply the warped harmonic by the $j$th instantaneous amplitude $|\widehat{s_j}(n_j)|\alpha_j(t)$ to obtain
\begin{eqnarray*}
|\widehat{s_j}(n_j)|\alpha_j(t) g\circ p_j(t)&\approx&|\widehat{s_j}(n_j)|\alpha_j(t) e^{2\pi i\frac{nm_j}{n_j}p_j(t)}\nonumber\\
&=&|\widehat{s_j}(n_j)|\alpha_j(t) e^{2\pi i n N_j(\phi_j(t)-\phi_j(0))}\nonumber\\
&=&|\widehat{s_j}(n_j)|e^{-2\pi i n N_j\phi_j(0)}\alpha_j(t) e^{2\pi i n N_j\phi_j(t)}.
\end{eqnarray*}

\textbf{Step 6:} Solve the $L^2$ minimization problem for a complex factor $\beta\in \C$ such that
\[
\beta =  \underset{\beta\in\C}{\arg\min} \|r(t)-\beta|\widehat{s_j}(n_j)|\alpha_j(t) g\circ p_j(t)\|_{L^2}.
\]
Then 
\[
\beta|\widehat{s_j}(n_j)|\alpha_j(t) g\circ p_j(t)\approx\widehat{s_j}(n)\alpha_j(t)e^{2\pi inN_j\phi_j(t)},
\]
which implies
\[
|\beta|=\frac{|\widehat{s_j}(n)|}{|\widehat{s_j}(n_j)|}.
\]

\textbf{Step 7:} Update: Compute the new residual \[ r(t)=r(t)-\beta|\widehat{s_j}(n_j)|\alpha_j(t) g\circ p_j(t).\] Update the $j$th recovered general mode \[f_j(t)=f_j(t)+\beta|\widehat{s_j}(n_j)|\alpha_j(t) g\circ p_j(t),\] 
and the $j$th spectrum information set \[S_j=S_j\cup\{(\tau,|\beta|)\}.\]

\textbf{Step 8:} If $\|r(t)\|_{L^2}>\epsilon$, repeat step $3$-$7$. Otherwise, stop iterating and export the general mode estimates $f_k$ and the spectrum information $S_k$ for $k=1$, $\dots$, $K$.

Notice that for each pair $(\tau,|\beta|)\in S_k$, $(\tau,|\beta|)\approx(\frac{nm_k}{n_k},\frac{|\widehat{s_k}(n)|}{|\widehat{s_k}(n_k)|})$ for some $n$ such that $\widehat{s_k}(n)\neq 0$. For each $k$, let 
  \begin{equation}
    d_k(\xi)=
    \begin{cases}
      |\beta|,
      & \text{for }\xi=\tau\text{ such that }(\tau,\beta)\in S_k\nonumber\\
      0, 
      & otherwise.
    \end{cases}
  \end{equation}
Then 
\[
|\widehat{s_k}(n)|\approx |\widehat{s_k}(n_k)|d_k(\frac{m_kn}{n_k})\Rightarrow d_k(\xi)\approx \frac{1}{|\widehat{s_k}(n_k)|}\left|\widehat{s_k}(\frac{n_k}{m_k}\xi)\right|.
\]
Hence, $d_k(\xi)$ is an approximation of the spectral energy $|\widehat{s_k}(\xi)|$ up to a constant factor and a scaling.

The DSA method proposed above can take into account all the Fourier expansion terms, even if there are weak energy terms and crossover frequencies. Let us consider the \textbf{Example $1$} again. The reconstructed general modes recovered by the DSA method shown in Figure \ref{fig:s1s2mode2}  are exactly the desired general modes.

\begin{figure}[ht!]
  \begin{center}
    \begin{tabular}{cc}
\includegraphics[height=1.6in]{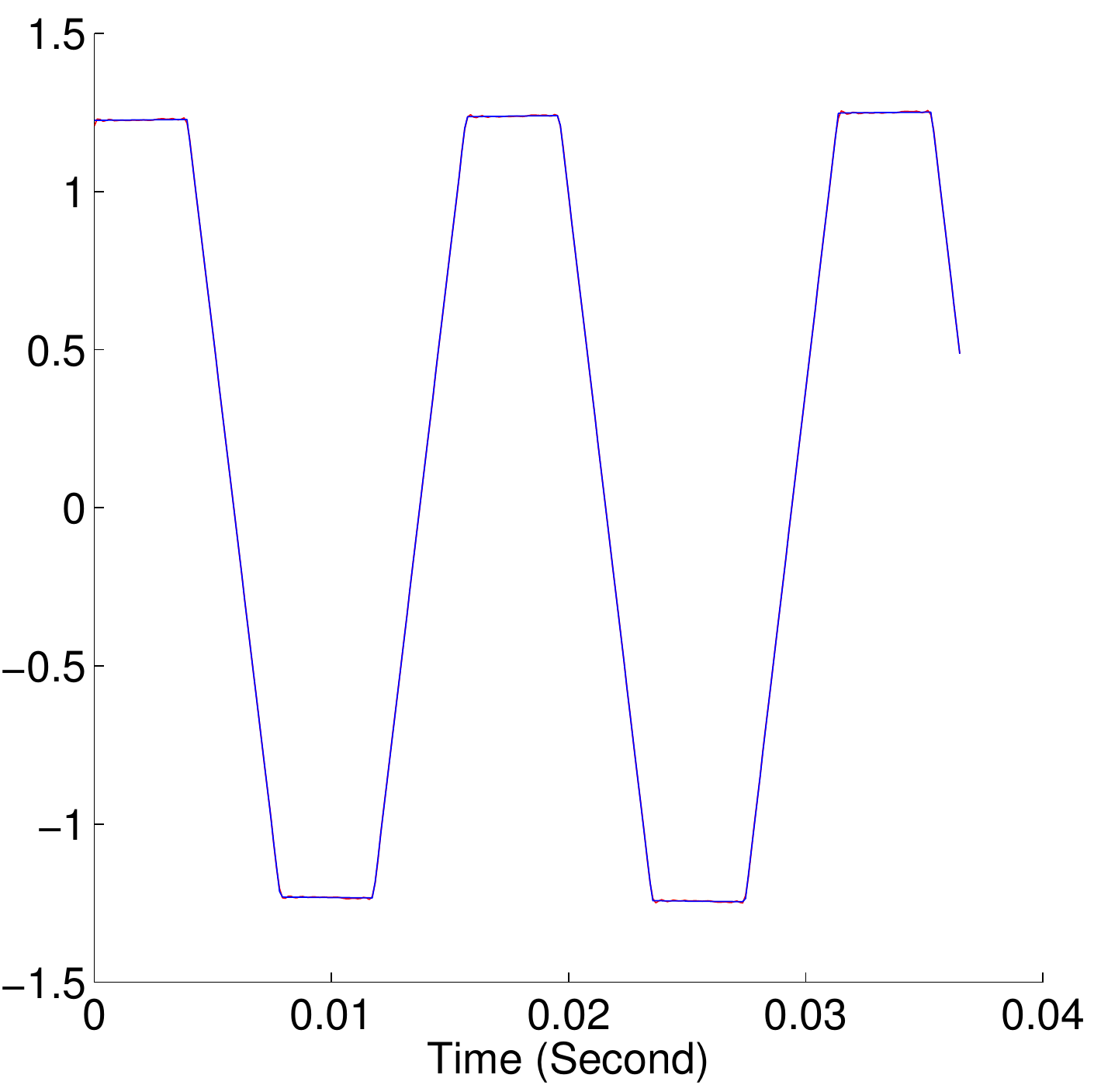} & \includegraphics[height=1.6in]{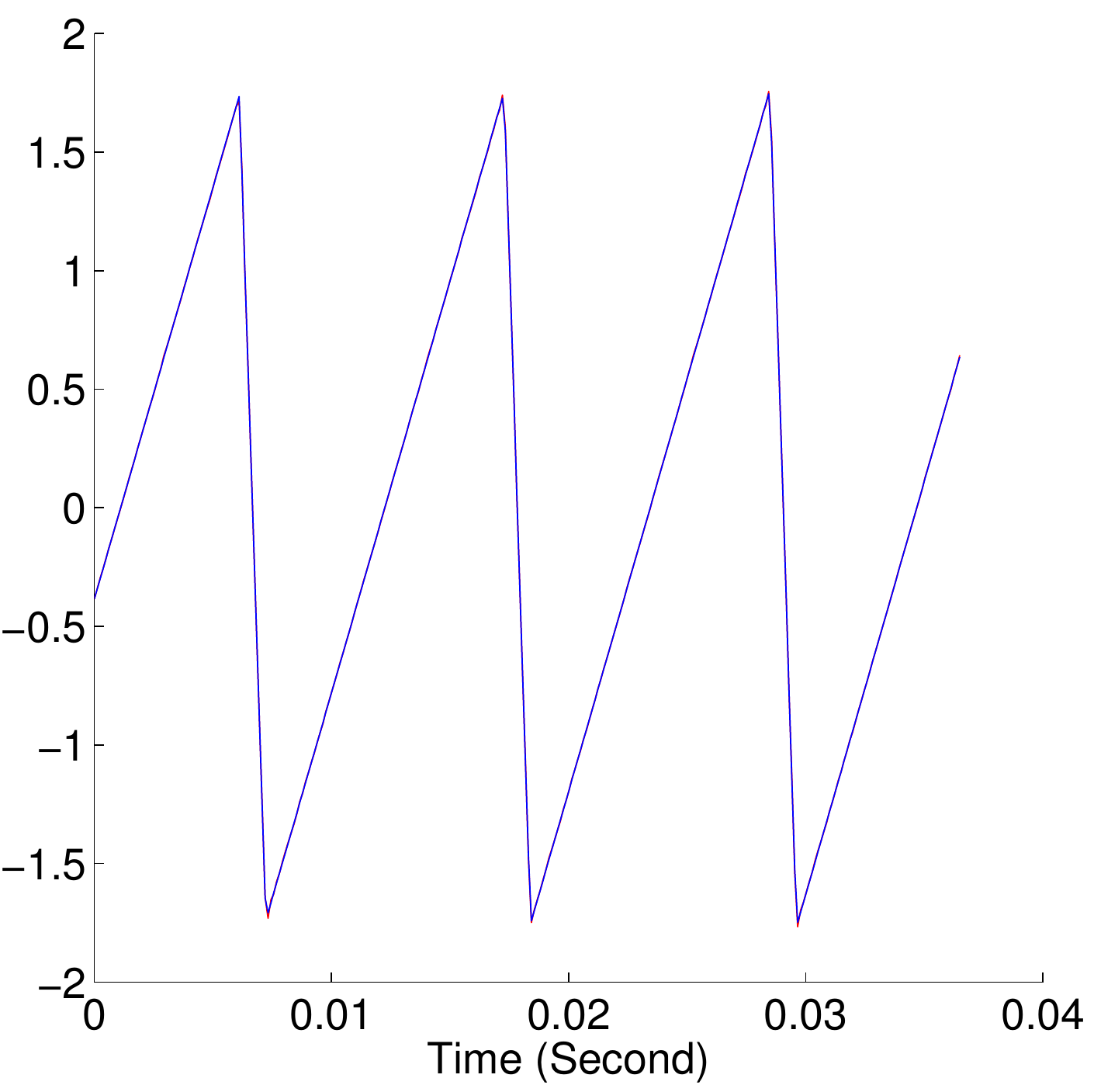}
    \end{tabular}
  \end{center}
  \caption{Blue: Real signals. Red: Reconstructed results. Two recovered general modes provided by the DSA method.}
\label{fig:s1s2mode2}
\end{figure}

\section{Analysis of the 1D SSWPT}
\label{sec:SSWPT}
In this section, we provide rigorous analysis of the 1D SSWPT for the general mode decomposition problem following the model in \cite{SSWPT}. 
\subsection{General mode decomposition problems}
\label{sub:GMD}
\begin{definition} General shape functions:
\label{def:GSF}

The general shape function class ${\cal S}_M$ consists of $2\pi$-periodic functions $s(t)$ in the Wiener Algebra with a unit $L^2([-\pi,\pi])$-norm and a $L^\infty$-norm bounded by $M$ satisfying the following spectral conditions:
\begin{enumerate}
\item The Fourier series of $s(t)$ is uniformly convergent;
\item $\sum_{n=-\infty}^{\infty}|\widehat{s}(n)|\leq M$ and $\widehat{s}(0)=0$;
\item Let $\Lambda$ be the set of integers $\{|n|: \widehat{s}(n)\neq 0\}$. The greatest common divisor $\gcd(s)$ of all the elements in $\Lambda$ is $1$.
\end{enumerate}
\end{definition}
In fact, if $\gcd(s)>1$, then the general mode $s(2\pi N\phi(t))$ can be considered as a more oscillatory mode $\tilde{s}(2\pi \gcd(s)N\phi(t))$ with $\gcd(\tilde{s})=1$ and the Fourier coefficients $\widehat{\tilde{s}}(n)=\widehat{s}(\gcd(s)n)$. The requirement that $\widehat{s}(0)=0$ and $s$ has a unite $L^2([-\pi,\pi])$-norm is to normalize the general shape function.

\begin{definition}
  \label{def:GIMTF}
  A function $f(t)=\alpha(t)s(2\pi N \phi(t))$ is a general intrinsic mode type
  function (GIMT) of type $(M,N)$, if $s(t)\in {\cal S}_M$ and $\alpha(t)$ and $\phi(t)$ satisfy the conditions below.
\begin{align*}
    \alpha(t)\in C^\infty, \quad |\alpha'|\leq M, \quad 1/M \leq \alpha\leq M \\
    \phi(t)\in C^\infty,  \quad  1/M \leq | \phi'|\leq M, \quad |\phi''|\leq M.
   \end{align*}
\end{definition}

\begin{definition}
  \label{def:GSWSIMC}
  A function $f(t)$ is a well-separated general superposition of type
  $(M,N,K,s)$, if
  \[
  f(t)=\sum_{k=1}^K f_k(t),
  \] 
  where each $f_k(t)=\alpha_k(t)s_k(2\pi N_k \phi_k(t))$ is a GIMT of type $(M,N_k)$ such that $N_k\geq N$ and the phase functions satisfy the
  separation condition: for any pair $(a,b)$, there exists at most one pair $(n,k)$ such that $\widehat{s_k}(n)\neq 0$ and that
\[
|a|^{-s}|a-nN_k\phi_k'(b)|< d.
\]
We denote by $GF(M,N,K,s)$ the set of all such functions.
\end{definition}

Notice that the mode decomposition problem of the form \eqref{P1} is a special case of the general mode decomposition problem of the form \eqref{P2}. We therefore only analyze the 1D SSWPT for the general mode decomposition problem. Besides, the components are not necessarily defined in the whole domain $\R$, because the synchrosqueezed transforms are localized so as to capture the non-linear, non-stationary features of signals as illustrated in \cite{Daubechies2011,SSWPT}. For the sake of convenience, we omit the discussion of the data segments.

\subsection{Instantaneous frequency estimates}
With the definitions above, we are ready to present the theorems for the 1D SSWPT with a geometric scaling parameter $s\in(1/2,1)$. The estimates of the instantaneous frequencies $\{N_k\phi_k'(t)\}_{k=1}^K$ rely on Theorem \ref{thm:main}, Algorithm \ref{alg:curve}, and Theorem \ref{thm:inst2} below. 

\begin{theorem}
  \label{thm:main}
  For a function $f(t)$ and $\eps>0$, we define
  \[
  R_{\eps} = \{(a,b): |W_f(a,b)|\geq |a|^{-s/2}\sqrt \eps\}
  \]
  and 
  \[
  Z_{n,k} = \{(a,b): |a-nN_k\phi_k'(b)|\leq d|a|^s \}
  \]
  for $1\le k\le K$ and $|n|\geq 1$. For fixed $M$ and $K$, for any $\eps>0$, there
  exists a constant $N_0(M,K,s,\eps)>0$ such that for any
  $N>N_0(M,K,s,\eps)$ and $f(t)\in GF(M,N,K,s)$ the following statements
  hold.
  \begin{enumerate}[(i)]
  \item $\{Z_{n,k}: 1\le k \le K, \widehat{s_k}(n)\neq 0\}$ are disjoint and $R_{\eps}
    \subset \bigcup_{1\le k \le K} \bigcup_{ \widehat{s_k}(n)\neq 0} Z_{n,k}$;
  \item For any $(a,b) \in R_{\eps} \cap Z_{n,k}$, 
    \[
    \frac{|v_f(a,b)-nN_k\phi_k'(b)|}{ |nN_k \phi_k'(b)|}\lesssim\sqrt \eps.
    \]
  \end{enumerate}
\end{theorem}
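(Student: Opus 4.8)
The plan is to reduce everything to a single-mode estimate and then assemble the superposition. Writing each component through its Fourier series, $f(t)=\sum_{k=1}^K\sum_{n}\widehat{s_k}(n)\,g_{k,n}(t)$ with $g_{k,n}(t)=\alpha_k(t)e^{2\pi i nN_k\phi_k(t)}$, linearity gives $W_f(a,b)=\sum_{k,n}\widehat{s_k}(n)W_{g_{k,n}}(a,b)$. So first I would analyze $W_g(a,b)$ for a pure wave $g(t)=\alpha(t)e^{2\pi iN\phi(t)}$ (thinking $N=nN_k$). Substituting $u=|a|^s(t-b)$ and Taylor expanding the phase as $N\phi(b+|a|^{-s}u)=N\phi(b)+N\phi'(b)|a|^{-s}u+\tfrac12 N\phi''|a|^{-2s}u^2$ and the amplitude as $\alpha(b+|a|^{-s}u)=\alpha(b)+O(|a|^{-s})$, the zeroth-order term collapses, by the real-valuedness of $\widehat w$, to
\[
W_g^{(0)}(a,b)=|a|^{-s/2}\alpha(b)e^{2\pi iN\phi(b)}\,\widehat w\!\big(|a|^{-s}(N\phi'(b)-a)\big).
\]
Because $\widehat w$ is supported in $(-d,d)$, this leading term vanishes identically outside the tube $|a-N\phi'(b)|\le d|a|^s$, which is exactly $Z_{n,k}$ when $N=nN_k$.

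The core quantitative step is to bound the remainder $W_g-W_g^{(0)}$. Integrating the amplitude increment against $|w|$ costs a factor $|a|^{-s}\|u w\|_{L^1}$, while linearizing the quadratic phase costs $N|\phi''|\,|a|^{-2s}\|u^2 w\|_{L^1}$; since inside the relevant tube $N=nN_k\lesssim M|a|$ (from $|a|\approx nN_k\phi_k'(b)$ and $\phi_k'\ge 1/M$), the second contribution is $\lesssim M^2|a|^{1-2s}$. Here is where $s\in(1/2,1)$ is essential: both $|a|^{-s}$ and $|a|^{1-2s}=|a|^{-(2s-1)}$ tend to $0$ as $|a|\to\infty$, so relative to the $|a|^{-s/2}$ scale of $W_g^{(0)}$ the remainder is a factor $\delta(|a|)\lesssim M^3|a|^{-(2s-1)}$ that can be forced below any prescribed level by taking $|a|$, hence $N$, large. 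I would package this as a lemma giving $|W_{g_{k,n}}(a,b)-W_{g_{k,n}}^{(0)}(a,b)|\le |a|^{-s/2}\delta$ together with an analogous estimate for $\partial_b W_{g_{k,n}}$, whose leading part is $2\pi i\,nN_k\phi_k'(b)\,W_{g_{k,n}}^{(0)}$ because differentiating $e^{2\pi i nN_k\phi_k(b)}$ dominates.

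With this in hand, statement (i) follows quickly: disjointness of the $Z_{n,k}$ with $\widehat{s_k}(n)\ne0$ is precisely the separation condition of Definition \ref{def:GSWSIMC}, and for $(a,b)$ outside every tube all leading terms $W^{(0)}_{g_{k,n}}$ vanish, so $W_f$ equals a sum of remainders; summing the remainder bound against $\sum_n|\widehat{s_k}(n)|\le M$ and over the finitely many $k$ yields $|W_f(a,b)|<|a|^{-s/2}\sqrt\eps$ once $N>N_0$, i.e. $(a,b)\notin R_\eps$. For statement (ii), if $(a,b)\in R_\eps\cap Z_{n,k}$ then disjointness forces the leading parts of every other $(n',k')$ to vanish there, so $W_f$ and $\partial_b W_f$ agree with $\widehat{s_k}(n)W^{(0)}_{g_{k,n}}$ and $2\pi i\,nN_k\phi_k'(b)\,\widehat{s_k}(n)W^{(0)}_{g_{k,n}}$ up to small remainders $\delta_1,\delta_2$. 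Writing $v_f-nN_k\phi_k'(b)=\big(\delta_2-2\pi i\,nN_k\phi_k'(b)\delta_1\big)/(2\pi i W_f)$ and using $|W_f|\ge|a|^{-s/2}\sqrt\eps$ in the denominator, the relative error is $\lesssim\delta(|a|)/\sqrt\eps$, which is $\lesssim\sqrt\eps$ provided $N_0$ is chosen so that $\delta\lesssim\eps$ throughout the tubes.

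The main obstacle I anticipate is making the superposition estimate uniform over all Fourier modes, including the high harmonics $n\to\infty$ whose instantaneous frequency $nN_k\phi_k'(b)$ far exceeds $|a|$. For these the crude in-tube bound $nN_k\lesssim M|a|$ fails, and I would instead need non-stationary-phase decay: repeated integration by parts in $u$, exploiting that $w$ is Schwartz and that the phase derivative $|a|^{-s}(nN_k\phi_k'(b+|a|^{-s}u)-a)$ is bounded away from zero, to obtain $|W_{g_{k,n}}(a,b)|\lesssim|a|^{-s/2}(1+|\eta_{k,n}|)^{-L}$ with $\eta_{k,n}=|a|^{-s}(nN_k\phi_k'(b)-a)$ and $L$ arbitrary. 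Controlling the $u$-dependent remainder in the Taylor expansion of the phase while carrying out these integrations by parts, and then checking that the resulting series in $n$ is summable against $\sum_n|\widehat{s_k}(n)|\le M$, is the delicate bookkeeping on which the whole argument rests.
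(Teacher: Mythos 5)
Your proposal is correct and takes essentially the same route as the paper: its Lemmas \ref{lemma1} and \ref{lemma2} carry out exactly your split into near frequencies (first-order Taylor expansion of the amplitude and second-order of the phase, with errors of size $|a|^{-s}$ and $nN_k|a|^{-2s}$, which is where $s\in(1/2,1)$ enters) and far frequencies (repeated integration by parts against the Schwartz mother wave packet, yielding $(nN_k)^{-(1-s)r}$ decay summable against $\sum_n|\widehat{s_k}(n)|\leq M$), after which (i) and (ii) follow from the support $(-d,d)$ of $\widehat{w}$, the separation condition, and the lower bound $|W_f(a,b)|\geq |a|^{-s/2}\sqrt{\eps}$ just as you describe. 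The ``main obstacle'' you flag --- uniformity over the high harmonics $n\to\infty$ --- is precisely what the paper's nonstationary-phase estimate for $(k,n)\notin\Omega_a$ in Lemma \ref{lemma1} resolves, by the argument you sketch.
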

The proof of Theorem \ref{thm:main} relies on two lemmas as follows to estimate the asymptotic behavior of $W_f(a,b)$ and $\partial_b W_f(a,b)$ as $N$ going to infinity. In what follows, when we write $O(\cdot)$, $\lesssim$, or $\gtrsim$, the implicit constants may depend on $M$ and $K$.
\begin{lemma} 
\label{lemma1}
Suppose $\Omega_a=\{(k,n):a\in[\frac{nN_k}{2M},2MnN_k]\}$. Under the assumption of Theorem \ref{thm:main}, for any $\eps>0$, we have
\[
W_f(a,b)=|a|^{-s/2}\left(\sum_{(k,n)\in\Omega_a}\widehat{s_k}(n)\alpha_k(b)e^{2\pi inN_k\phi_k(b)}\widehat{w}\left(\left(a-nN_k\phi_k'(b)\right)|a|^{-s}\right)+O(\eps)\right),
\]
when $N$ is sufficiently large.
\end{lemma}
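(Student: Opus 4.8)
The plan is to expand each shape function into its Fourier series, thereby reducing $W_f$ to a sum of single-mode transforms, to evaluate each single-mode transform by a change of variables and a Taylor expansion about the time $b$, and finally to split the resulting double sum into a resonant part (indices in $\Omega_a$), which produces the stated main term, and a non-resonant tail, which is absorbed into the $O(\eps)$ error.

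First I would invoke the uniform convergence of the Fourier series of each $s_k$ together with the Wiener-algebra bound $\sum_n|\widehat{s_k}(n)|\le M$ from Definition \ref{def:GSF} to interchange summation and integration, writing
\[
W_f(a,b)=\sum_{k=1}^K\sum_{n=-\infty}^{\infty}\widehat{s_k}(n)\,W_{g_{k,n}}(a,b),\qquad g_{k,n}(t)=\alpha_k(t)e^{2\pi i nN_k\phi_k(t)}.
\]
For a single mode, substituting $u=|a|^s(t-b)$ into \eqref{eq:WAT} gives
\[
W_{g_{k,n}}(a,b)=|a|^{-s/2}\int\overline{w(u)}\,e^{-2\pi i a|a|^{-s}u}\,\alpha_k(b+|a|^{-s}u)\,e^{2\pi i nN_k\phi_k(b+|a|^{-s}u)}\,du.
\]
Taylor-expanding $\alpha_k$ to first order and $\phi_k$ to second order about $b$, the leading contribution (keeping $\alpha_k(b)$ and the linear phase $nN_k\phi_k'(b)|a|^{-s}u$, and using that $\widehat w$ is real-valued) is precisely $|a|^{-s/2}\widehat{s_k}(n)\alpha_k(b)e^{2\pi i nN_k\phi_k(b)}\widehat{w}\big((a-nN_k\phi_k'(b))|a|^{-s}\big)$, the argument of $\widehat w$ matching the stated factor.

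Next I would control the two sources of error. For resonant indices $(k,n)\in\Omega_a$ I would estimate the Taylor remainders on the effective support of $w$: since $w$ is Schwartz, the integral is essentially supported on $|u|\lesssim 1$, the amplitude remainder is $O(M|a|^{-s})$, and, most importantly, the quadratic phase remainder is bounded by $\pi nN_k|\phi_k''|\,|a|^{-2s}u^2\lesssim(nN_k)^{1-2s}$ because $|a|\gtrsim nN_k$ on $\Omega_a$ and $|\phi_k''|\le M$. Here the hypothesis $s\in(1/2,1)$ is essential: $1-2s<0$ forces $(nN_k)^{1-2s}\to0$ as $N\to\infty$, so choosing $N_0$ large makes each resonant error at most $\eps|\widehat{s_k}(n)|$ up to constants. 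For the non-resonant tail $(k,n)\notin\Omega_a$, the phase $\theta(u)=nN_k\phi_k(b+|a|^{-s}u)-a|a|^{-s}u$ has $|\theta'(u)|=|a|^{-s}\,|nN_k\phi_k'(b+|a|^{-s}u)-a|\gtrsim|a|^{-s}\max(a,nN_k)$ (using $1/M\le\phi_k'\le M$ to bound the frequency gap from below), and $N>N_0$ makes this lower bound grow like a positive power of $N$; repeated integration by parts against the Schwartz function $w$ then yields super-polynomial decay, uniformly over the tail.

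Finally I would sum. The only surviving main terms are those indexed by $\Omega_a$, since off $\Omega_a$ one has $|a-nN_k\phi_k'(b)|\,|a|^{-s}\ge d$ (for $N>N_0$) while $\widehat w$ is supported in $(-d,d)$. All error contributions are bounded by a constant multiple of $\eps\sum_{k,n}|\widehat{s_k}(n)|\le\eps KM=O(\eps)$, which completes the estimate. I expect the main obstacle to be the non-resonant tail: one must obtain a single-mode decay bound that is uniform in the Fourier index $n$ and then sum it against $\sum_n|\widehat{s_k}(n)|\le M$, carefully tracking how the non-stationary-phase lower bound interacts with the scaling $|a|^{-s}$ and the constraint $N_k\ge N$; the resonant curvature estimate, where $s>1/2$ enters, is conceptually the other crux but is computationally lighter.
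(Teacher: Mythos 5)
Your proposal is correct and takes essentially the same route as the paper's own proof: Fourier expansion of the shape functions justified by uniform convergence and the Wiener-algebra bound $\sum_n|\widehat{s_k}(n)|\le M$, the change of variables $x=|a|^{s}(t-b)$, repeated integration by parts (non-stationary phase, with $|g'|\gtrsim (nN_k)^{1-s}$) to absorb the non-resonant tail $(k,n)\notin\Omega_a$ into $O(\eps)$, and first/second-order Taylor expansion of $\alpha_k$ and $\phi_k$ on $\Omega_a$ with amplitude error $O(|a|^{-s})$ and curvature error $O\left((nN_k)^{1-2s}\right)$, where $s>1/2$ enters exactly as you indicate. The only differences are cosmetic: the paper records the explicit thresholds $N\gtrsim\max\{\eps^{-1/((1-s)r)},\eps^{-1/(2s-1)}\}$ where you defer to a sufficiently large $N_0$.
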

\begin{proof}
Without loss of generality, we can simply assume $N_k=N$ for all $k$ and only prove the case for $a>1$. Because $w(t)$ decays rapidly, the wave packet transform $W_f(a,b)$ is well defined. By the uniform convergence of the Fourier series of $s_k(t)$ and the change of variables, we have
\begin{eqnarray*}
W_f(a,b)&=&\int_{\R}\sum_{k=1}^K\alpha_k(t)s_k(2\pi N\phi_k(t))a^{s/2}w(a^{s}(t-b))e^{-2\pi i(t-b)a}dt\\
&=& a^{-s/2}\sum_{k=1}^K\sum_{|n|\geq 1}\widehat{s_k}(n)\int_{\R}\alpha_k(a^{-s}x+b)w(x)e^{2\pi i(nN\phi_k(a^{-s}x+b)-a^{1-s}x)}dx.
\end{eqnarray*}
Let us estimate $I_{kn}=\widehat{s_k}(n)\int_{\R}\alpha_k(a^{-s}x+b)w(x)e^{2\pi i(nN\phi_k(a^{-s}x+b)-a^{1-s}x)}dx$. Let $h(x)=\widehat{s_k}(n)\alpha_k(a^{-s}x+b)w(x)$ and $g(x)=2\pi(nN\phi_k(a^{-s}x+b)-a^{1-s}x)$, then
\[
I_{kn}=\int_{\R}h(x)e^{ig(x)}dx,
\]
and 
\[
g'(x)=2\pi a^{-s}(nN\phi_k'(a^{-s}x+b)-a).
\]
If $a<\frac{nN}{2M}$, then $|g'(x)|\gtrsim a^{-s}nN\gtrsim (nN)^{1-s}$. If $a>2MnN$, then $|g'(x)|\gtrsim a^{1-s}\gtrsim(nN)^{1-s}$. So, if $a\notin [\frac{nN}{2M},2MnN]$, then $|g'(x)|\gtrsim(nN)^{1-s}$. For real smooth functions $g(x)$, we define the differential operator
\[
L=\frac{1}{i}\frac{\partial_x}{g'}.
\]
Because $h(x)$ decays sufficiently fast at infinity, we perform integration by parts $r$ times to get
\[
\int_{\R}he^{ig}dx=\int_{\R}h(L^re^{ig})dx=\int_{\R}\left((L^*)^rh\right)e^{ig}dx,
\]
where $L^*$ is the adjoint of $L$. A few algebraic calculation shows that $L^*$ contributes a factor of order $\frac{1}{|g'|}\lesssim\frac{1}{(nN)^{1-s}}$ if $a\notin [\frac{nN}{2M},2MnN]$, and we therefore have 
\[
|I_{kn}|=\left|\int_{\R}e^{ig}\left((L^*)^rh\right)dx\right|\lesssim|\widehat{s_k}(n)|(nN)^{-(1-s)r}\lesssim |\widehat{s_k}(n)|\eps.
\]
Since $s<1$ and $\sum_{n=-\infty}^{\infty}|\widehat{s_k}(n)|\leq M$, if $N\gtrsim \eps^{\frac{-1}{(1-s)r}}$, then
\begin{equation}
\label{est:1}
a^{-s/2}\sum_{(k,n)\notin \Omega_a} I_{kn}\lesssim a^{-s/2}\sum_{(k,n)\notin \Omega_a}|\widehat{s_k}(n)|O(\eps)\lesssim a^{-s/2}O(\eps).
\end{equation}

Now let us estimate $I_{kn}$ when $a\in[\frac{nN}{2M},2MnN]$. Recall that
\begin{equation}
I_{kn}=\widehat{s_k}(n)\int_{\R}\alpha_k(a^{-s}x+b)w(x)e^{2\pi i(nN\phi_k(a^{-s}x+b)-a^{1-s}x)}dx.\nonumber
\end{equation}
By Taylor expansion,
\begin{equation}
\alpha_k(a^{-s}x+b)=\alpha_k(b)+\alpha_k'(b^*)a^{-s}x\nonumber
\end{equation}
and 
\begin{equation}
\phi_k(a^{-s}x+b)=\phi_k(b)+\phi_k'(b)a^{-s}x+\frac{1}{2}\phi_k''(b^{**})a^{-2s}x^2\nonumber
\end{equation}
for some $b^*$ and $b^{**}$. Notice that, if $N\gtrsim \eps^{-1/s}$, then
\begin{eqnarray}
& &|I_{kn}-\widehat{s_k}(n)\alpha_k(b)\int_{\R}w(x)e^{2\pi i(nN\phi_k(a^{-s}x+b)-a^{1-s}x)}dx|\nonumber\\
&\lesssim& |\widehat{s_k}(n)|\alpha_k'(b^*)a^{-s}\int_{\R}|x||w(x)|dx\nonumber\\
&\lesssim& |\widehat{s_k}(n)|O(\eps).\nonumber
\end{eqnarray}
This implies that
\begin{equation}
I_{kn}=\left(\widehat{s_k}(n)\alpha_k(b)\int_{\R}w(x)e^{2\pi i(nN\phi_k(a^{-s}x+b)-a^{1-s}x)}dx+|\widehat{s_k}(n)|O(\eps)\right)\nonumber
\end{equation}
for $a\in[\frac{nN}{2M},2MnN]$ and $N\gtrsim\eps^{-1/s}$.
Since $|e^{ix}-1|\leq|x|$, if $N\gtrsim \eps^{-1/(2s-1)}$, then we have 
\begin{eqnarray}
& &|I_{kn}-\widehat{s_k}(n)\alpha_k(b)\int_{\R}w(x)e^{2\pi i(nN\phi_k(b)+nN\phi_k'(b)a^{-s}x-a^{1-s}x)}dx|\nonumber\\
&\lesssim&|\widehat{s_k}(n)|\left(O(\eps)+\left|\alpha_k(b)\int_{\R}w(x)e^{2\pi i(nN\phi_k(b)+nN\phi_k'(b)a^{-s}x-a^{1-s}x)}\left(e^{2\pi inN\frac{1}{2}\phi_k''(b^{**})a^{-2s}x^2}-1\right)dx\right|\right)\nonumber\\
&\lesssim&|\widehat{s_k}(n)|\left(O(\eps)+nNa^{-2s}\int_{\R}x^2|w(x)|dx\right)\nonumber\\
&\lesssim&|\widehat{s_k}(n)|O(\eps).\nonumber
\end{eqnarray}
Hence, it holds that
\begin{eqnarray}
\label{est:2}
I_{kn}
&=&\left(\widehat{s_k}(n)\alpha_k(b)e^{2\pi inN\phi_k(b)}\widehat{w}\left((a-nN\phi_k'(b))a^{-s}\right)+|\widehat{s_k}(n)|O(\eps)\right),
\end{eqnarray}
if $a\in[\frac{nN}{2M},2MnN]$ and $N\gtrsim\max\{\eps^{-1/s},\eps^{-1/(2s-1)}\}=\eps^{-1/(2s-1)}$.

In sum, by \eqref{est:1} and \eqref{est:2}, we arrive at
\begin{eqnarray}
W_f(a,b)\nonumber
&=&a^{-s/2}\left( \sum_{(k,n)\in\Omega_a}I_{kn}+\sum_{(k,n)\notin\Omega_a}I_{kn} \right)\nonumber\\
&=&|a|^{-s/2}\left(\sum_{(k,n)\in\Omega_a}\widehat{s_k}(n)\alpha_k(b)e^{2\pi inN\phi_k(b)}\widehat{w}\left(\left(a-nN\phi_k'(b)\right)|a|^{-s}\right)+O(\eps)\right),\nonumber
\end{eqnarray}
if $N\gtrsim \max\{\eps^{\frac{-1}{(1-s)r}},\eps^{\frac{-1}{2s-1}}\}$.

Similar argument can prove the above conclusion for $a<-1$ and it is simple to generalize it for different $N_k$ to complete the proof.
\end{proof}

The next lemma is to estimate $\partial_b W_f(a,b)$ when $\Omega_a=\{(k,n):a\in[\frac{nN_k}{2M},2MnN_k]\}$ is not empty, i.e., when $W_f(a,b)$ is relevant.

\begin{lemma} 
\label{lemma2}
Suppose $\Omega_a=\{(k,n):a\in[\frac{nN_k}{2M},2MnN_k]\}$ is not empty. Under the assumption of Theorem \ref{thm:main}, for any $\eps>0$, we have
\begin{eqnarray}
& &\partial_b W_f(a,b)\nonumber\\
&=&|a|^{-s/2}\left(\sum_{(k,n)\in\Omega_a}2\pi inN_k\widehat{s_k}(n)\alpha_k(b)\phi_k'(b)e^{2\pi inN_k\phi_k(b)}\widehat{w}\left(\left(a-nN_k\phi_k'(b)\right)|a|^{-s}\right)+|a|O(\eps)\right),\nonumber
\end{eqnarray}
when $N$ is sufficiently large.
\end{lemma}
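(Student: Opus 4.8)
The plan is to mirror the proof of Lemma \ref{lemma1} almost verbatim, since the only new feature is the single $b$-derivative, which I will push inside the integral representation already established there. Starting from
\[
W_f(a,b) = a^{-s/2}\sum_{k,n}\widehat{s_k}(n)\int_\R \alpha_k(a^{-s}x+b)w(x)e^{2\pi i(nN\phi_k(a^{-s}x+b)-a^{1-s}x)}dx,
\]
the variable $b$ now enters only through $\alpha_k(a^{-s}x+b)$ and $\phi_k(a^{-s}x+b)$, so differentiating under the integral sign (justified by the rapid decay of $w$) produces, for each $(k,n)$, exactly two contributions: an \emph{amplitude piece} carrying $\alpha_k'(a^{-s}x+b)$ and a \emph{phase piece} carrying the factor $2\pi i nN\phi_k'(a^{-s}x+b)$ pulled down from the exponential. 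Equivalently, one may observe that $\overline{w_{ab}(t)}$ depends on $t$ and $b$ only through $t-b$, so $\partial_b\overline{w_{ab}}=-\partial_t\overline{w_{ab}}$ and an integration by parts gives $\partial_b W_f(a,b)=W_{f'}(a,b)$; expanding $f'$ reproduces the same two families of terms. I will organize the estimate around the $\Omega_a$ / non-$\Omega_a$ dichotomy of Lemma \ref{lemma1}.

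For the off-support terms $(k,n)\notin\Omega_a$ I would rerun the integration-by-parts argument of Lemma \ref{lemma1} verbatim. The only change is the extra factor $2\pi i nN\phi_k'$ in the phase piece, which is of size $O(nN)$; since the $r$-fold integration by parts already supplies a gain of $(nN)^{-(1-s)r}$, the product is $\lesssim|\widehat{s_k}(n)|(nN)^{1-(1-s)r}$, and choosing $r$ with $(1-s)r>1$ together with $N$ large forces this below $|\widehat{s_k}(n)|\eps$. Summing against $\sum_n|\widehat{s_k}(n)|\le M$ and restoring the $a^{-s/2}$ prefactor keeps these contributions within the allowed error $|a|O(\eps)$ (using $a\ge1$, and $a\gtrsim N$ in the case $a>2MnN$).

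For the relevant terms $(k,n)\in\Omega_a$ I would Taylor-expand $\alpha_k$ and $\phi_k$ about $b$ exactly as before. In the phase piece, replacing the slowly varying factor $\alpha_k(a^{-s}x+b)\phi_k'(a^{-s}x+b)$ by its value $\alpha_k(b)\phi_k'(b)$ costs $O(a^{-s}|x|)$, and linearizing the phase (dropping the quadratic term $\tfrac12\phi_k''a^{-2s}x^2$) costs $O(nNa^{-2s})$ via $|e^{iy}-1|\le|y|$; after multiplying by the overall $nN$ these become $O(a^{1-s})$ and $O(a^{2-2s})$ respectively. This is precisely where $s\in(1/2,1)$ is essential: both exponents are strictly below $1$, so on $\Omega_a$ (where $nN\sim a\gtrsim N$) they are bounded by $|a|O(\eps)$ once $N\gtrsim\max\{\eps^{-1/s},\eps^{-1/(2s-1)}\}$. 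What survives is the claimed leading term $2\pi i nN\widehat{s_k}(n)\alpha_k(b)\phi_k'(b)e^{2\pi i nN\phi_k(b)}\widehat{w}((a-nN\phi_k'(b))a^{-s})$.

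Finally I must dispose of the amplitude piece, which has no counterpart in the statement. Its relevant-term contribution is $\lesssim|\widehat{s_k}(n)|$ since $|\alpha_k'|\le M$ and $\widehat{w}$ is bounded; but on $\Omega_a$ one has $a\gtrsim N/(2M)$, so $|\widehat{s_k}(n)|\lesssim|a|N^{-1}|\widehat{s_k}(n)|\lesssim|a|\eps|\widehat{s_k}(n)|$ for $N\gtrsim\eps^{-1}$, and its off-support part is controlled by the same integration by parts (now without the harmful $nN$ factor). Summing all pieces and reassembling gives the stated formula. The main obstacle is bookkeeping: every error term of Lemma \ref{lemma1} is inflated by the extra factor $nN\sim a$, so I must recheck that each still fits inside $|a|O(\eps)$ rather than $O(\eps)$ — the quadratic-phase error $a^{2-2s}$ is the tightest, and it is exactly the condition $s>1/2$ that saves it.
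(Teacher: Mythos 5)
Your argument is correct and follows essentially the same route as the paper: the paper differentiates the window and the modulation in the $t$-variable first (its terms $T_1$ and $T_2$) and then integrates by parts, whereupon the modulation term cancels and exactly your amplitude and phase pieces emerge, so your differentiation after the change of variables (equivalently, $\partial_b W_f=W_{f'}$) is the same computation in a slightly cleaner order. All the subsequent estimates---the $r$-fold non-stationary-phase bound off $\Omega_a$, the two Taylor errors $O(a^{1-s})$ and $O(a^{2-2s})$ on $\Omega_a$ with $s>1/2$ saving the latter, and the absorption of the $\alpha_k'$ contribution into $|a|O(\eps)$ using $a\gtrsim N$---coincide with the paper's proof.
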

\begin{proof}
Similar to the proof of Lemma \ref{lemma1}, we can assume $N_k=N$ for all $k$ and only need to prove the case when $a>1$. By the definition of the wave packet transform, we have
\begin{eqnarray*}
\partial_bW_f(a,b)
&=& \sum_{k=1}^K2\pi ia^{1+s/2}\int_{\R}\alpha_k(t)s_k(2\pi N\phi_k(t))w(a^s(t-b))e^{-2\pi i(t-b)a}dt\\
& &-\sum_{k=1}^Ka^{3s/2}\int_{\R}\alpha_k(t)s_k(2\pi N\phi_k(t))w'(a^s(t-b))e^{-2\pi i(t-b)a}dt.
\end{eqnarray*}
Denote the first term by $T_1$ and the second term by $T_2$. By a similar discussion in the proof of Lemma \ref{lemma1}, we have the following asymptotic estimates when $N$ is sufficiently large.
\begin{eqnarray}
T_2\nonumber
&=&-a^{s/2}\sum_{k=1}^K\sum_{|n|\geq 1}\widehat{s_k}(n)\int_{\R}\alpha_k(a^{-s}x+b)w'(x)e^{2\pi i(nN\phi_k(a^{-s}x+b)-a^{1-s}x)}dx\nonumber\\
&=&-a^{s/2}\sum_{(k,n)\in\Omega_a}\int_{\R}\widehat{s_k}(n)\alpha_k(a^{-s}x+b)w'(x)e^{2\pi i(nN\phi_k(a^{-s}x+b)-a^{1-s}x)}dx+a^{s/2}O(\eps)\nonumber\\
&=&a^{s/2}\sum_{(k,n)\in \Omega_a}\int_{\R}\widehat{s_k}(n)w(x)\alpha_k(a^{-s}x+b)e^{2\pi i(nN\phi_k(a^{-s}x+b)-a^{1-s}x)}\nonumber\\
& &\left(2\pi inN\phi_k'(a^{-s}x+b)a^{-s}-2\pi i a^{1-s}\right)dx+a^{-s/2}\sum_{(k,n)\in \Omega_a}\int_{\R}\widehat{s_k}(n)w(x)\nonumber\\
& &\alpha_k'(a^{-s}x+b)e^{2\pi i(nN\phi_k(a^{-s}x+b)-a^{1-s})}dx+a^{s/2}O(\eps)\nonumber\\
&=&a^{-s/2}\sum_{(k,n)\in \Omega_a}2\pi inN\int_{\R}\widehat{s_k}(n)\phi_k'(a^{-s}x+b)\alpha_k(a^{-s}x+b)w(x)e^{2\pi i(nN\phi_k(a^{-s}x+b)-a^{1-s}x)}dx\nonumber\\
& &-a^{1-s/2}\sum_{(k,n)\in \Omega_a}2\pi i\int_{\R}\widehat{s_k}(n)w(x)\alpha_k(a^{-s}x+b)e^{2\pi i(nN\phi_k(a^{-s}x+b)-a^{1-s}x)}dx\nonumber\\
& &+a^{-s/2}O(1)+a^{s/2}O(\eps)\nonumber\\
&=&a^{-s/2}\sum_{(k,n)\in \Omega_a}2\pi inN\left(\widehat{s_k}(n)\phi_k'(b)\alpha_k(b)e^{2\pi inN\phi_k(b)}\widehat{w}(a^{-s}(a-nN\phi_k'(b)))+|\widehat{s_k}(n)|O(\eps)\right)\nonumber\\
& &-a^{1+s/2}\sum_{(k,n)\in \Omega_a}\widehat{s_k}(n)2\pi i\int_{\R}\alpha_k(t)w(a^s(t-b))e^{2\pi i(nN\phi_k(t)-(t-b)a)}dt\nonumber\\
& &+a^{-s/2}O(1)+a^{s/2}O(\eps),\nonumber
\end{eqnarray}
if $N\gtrsim\max\{\eps^{\frac{-1}{(1-s)r}},\eps^{\frac{-1}{2s-1}}\}$. The third equality holds by integration by parts and the last equality holds by changing variables. Notice that
\begin{eqnarray}
T_1\nonumber
&=&a^{1+s/2}\sum_{(k,n)\in \Omega_a}\widehat{s_k}(n)2\pi i\int_{\R}\alpha_k(t)w(a^s(t-b))e^{2\pi i(nN\phi_k(t)-(t-b)a)}dt\nonumber\\
& &+\sum_{(k,n)\notin \Omega_a}2\pi ia^{1-s/2}\widehat{s_k}(n)\int_{\R}\alpha_k(a^{-s}x+b)w(x)e^{2\pi i(nN\phi_k(a^{-s}x+b)-a^{1-s}x)}dx\nonumber\\
&=&a^{1+s/2}\sum_{(k,n)\in \Omega_a}\widehat{s_k}(n)2\pi i\int_{\R}\alpha_k(t)w(a^s(t-b))e^{2\pi i(nN\phi_k(t)-(t-b)a)}dt+a^{1-s/2}O(\eps),\nonumber
\end{eqnarray}
if $N\gtrsim \eps^{\frac{-1}{(1-s)r}}$ for any $r>=1$. Hence $T_1+T_2$ results in 
\begin{eqnarray}
& &\partial_bW_f(a,b)\nonumber\\
&=&a^{-s/2}\sum_{(k,n)\in \Omega_a}2\pi inN\left(\widehat{s_k}(n)\phi_k'(b)\alpha_k(b)e^{2\pi inN\phi_k(b)}\widehat{w}(a^{-s}(a-nN\phi_k'(b)))+|\widehat{s_k}(n)|O(\eps)\right)\nonumber\\
& &+a^{-s/2}O(1)+a^{s/2}O(\eps)+a^{1-s/2}O(\eps)\nonumber\\
&=&|a|^{-s/2}\left(\sum_{(k,n)\in\Omega_a}2\pi inN\widehat{s_k}(n)\alpha_k(b)\phi_k'(b)e^{2\pi inN\phi_k(b)}\widehat{w}\left(\left(a-nN\phi_k'(b)\right)|a|^{-s}\right)+|a|O(\eps)\right),\nonumber
\end{eqnarray}
if $N$ is sufficiently large. So, the Lemma \ref{lemma2} is proved.
\end{proof}
We are now ready to prove Theorem \ref{thm:main} with Lemma \ref{lemma1} and Lemma \ref{lemma2}.
\begin{proof}
Let us first consider $(i)$. The well-separation condition implies that $\{Z_{n,k}:1\leq k\leq K, \widehat{s_k}(n)\neq 0\}$ are disjoint. Let $(a,b)$ be a point in $R_{\eps}$, then $|W_f(a,b)|\geq a^{-s/2}\sqrt{\eps}$, which means that $\Omega_a$ is not empty and $\exists (k,n)\in\Omega_a$ such that $\widehat{w}((a-nN_k\phi_k'(b))a^{-s})\neq 0$. Because the support of $\widehat{w}(\xi)$ is $(-d,d)$, we know $|a-nN_k\phi_k'(b)|\leq a^sd$, i.e., $(a,b)\in Z_{n,k}$. Hence, $R_\eps\subset \bigcup_{1\le k \le K} \bigcup_{ \widehat{s_k}(n)\neq 0} Z_{n,k}$.

To show $(ii)$, let us recall that $v_f(a,b)$ is defined as
\[
v_f(a,b)=\frac{\partial_bW_f(a,b)}{2\pi iW_f(a,b)},
\]
for $W_f(a,b)\neq 0$. If $(a,b)\in R_\eps\bigcap Z_{n,k}$, then by Lemma \ref{lemma1}
\begin{eqnarray}
W_f(a,b)&=&|a|^{-s/2}\left(\sum_{(k,n)\in\Omega_a}\widehat{s_k}(n)\alpha_k(b)e^{2\pi inN_k\phi_k(b)}\widehat{w}\left(\left(a-nN_k\phi_k'(b)\right)|a|^{-s}\right)+O(\eps)\right)\nonumber\\
&=&|a|^{-s/2}\left(\widehat{s_k}(n)\alpha_k(b)e^{2\pi inN_k\phi_k(b)}\widehat{w}\left(\left(a-nN_k\phi_k'(b)\right)|a|^{-s}\right)+O(\eps)\right),\nonumber
\end{eqnarray}
as the other terms drop out, since $\{Z_{n,k}\}$ are disjoint.
Similarly, by Lemma \ref{lemma2}
\begin{eqnarray}
& &\partial_b W_f(a,b)\nonumber\\
&=&|a|^{-s/2}\left(2\pi inN_k\widehat{s_k}(n)\alpha_k(b)\phi_k'(b)e^{2\pi inN_k\phi_k(b)}\widehat{w}\left(\left(a-nN_k\phi_k'(b)\right)|a|^{-s}\right)+|a|O(\eps)\right).\nonumber
\end{eqnarray}
Let $g$ denote the term $\widehat{s_k}(n)\alpha_k(b)e^{2\pi inN_k\phi_k(b)}\widehat{w}\left(\left(a-nN_k\phi_k'(b)\right)|a|^{-s}\right)$, then 
\begin{eqnarray}
v_f(a,b)&=&\frac{nN_k\phi_k'(b)g+|a|O(\eps)}{g+O(\eps)}\nonumber\\
&=&\frac{nN_k\phi_k'(b)(g+O(\eps))}{g+O(\eps)},\nonumber
\end{eqnarray}
since $a\in[\frac{nN_k}{2M},2MnN_k]$. Because $|W_f(a,b)|\geq a^{-s/2}\sqrt{\eps}$ for $(a,b)\in R_\eps$, then $|g|\gtrsim \sqrt{\eps}$. Therefore
\[
\frac{|v_f(a,b)-nN_k\phi_k'(b)|}{|nN_k\phi_k'(b)|}\lesssim\left|\frac{O(\eps)}{g+O(\eps)}\right|\lesssim\sqrt{\eps}.
\]
\end{proof}

Theorem \ref{thm:main} shows that the instantaneous frequency information function $v_f(a,b)$ can estimate $nN_k\phi_k'(t)$ accurately for a class of superpositions of general mode functions if their phases are sufficiently steep. This guarantees the well concentration of the synchrosqueezed energy distribution $T_f(v,b)$ around $nN_k\phi_k'(t)$. Hence, $\psi_{kn}(b)$ defined in the introduction of the GMDWP method is an accurate estimate of $nN_k\phi_k'(b)$. Next, a curve classification method and an instantaneous frequency identification method are introduced below.

Let us reindex the functions $\{\psi_{kn}:1\leq k\leq K,|n|\geq 1\}$ by $\{\psi_j\}_{1\leq j\leq L}$. Our goal is to obtain $K$ index sets $\{\Lambda_k\}_{1\leq k\leq K}$ such that $\{\psi_{kn}\}_{|n|\geq 1}=\{\psi_j\}_{j\in \Lambda_k}$. Because $\frac{\psi_{kn_1}}{\psi_{kn_2}}\approx \frac{n_1}{n_2}$, $\{\psi_{kn}\}_{|n|\geq 1}$ can approximately be considered as a set in a one dimensional point set in a high dimensional space. Hence, the curve classification can be considered as a subspace clustering problem studied in \cite{subspace,subspace2}. This motivates the following method to classify $\{\psi_j\}_{1\leq j\leq L}$. This method is similar to the method in \cite{subspace} for subspace clustering. 

\begin{algo}
\label{alg:curve}
  Curve classification of $\{\psi_j\}_{1\leq j\leq L}$
  \begin{algorithmic}[1]
    \STATE For each pair $(k,j)$ with $k\neq j$, let $m_k=\|\psi_k\|_{L^\infty}$ and $m_j=\|\psi_j\|_{L^\infty}$. Compute the linear regression of $\frac{\psi_km_j}{\psi_jm_k}$ and its residual $r_{kj}$. Let $R$ be the $L\times L$ matrix such that $R_{kj}=r_{kj}$.

   \STATE Set up a variance parameter $\sigma^2$ and define a Gaussian function $g(x)=e^{-\frac{x^2}{2\sigma^2}}$. 

   \STATE Form the affinity graph $G$ with nodes representing the $L$ instantaneous frequencies and edge weights given by $g(R_{kj})+g(R_{jk})$.

   \STATE Compute the eighenvalues of the normalized Laplacian of $G$ and sort them in descending order $\sigma_1\geq\sigma_2\geq\cdots\geq\sigma_L$. Let \[K=L-\underset{i=1,\cdots,L-1}{\arg\max}\left(\sigma_i-\sigma_{i+1}\right).\]
    
    \STATE Apply the spectral clustering method proposed in \cite{Andrew2001} and used in \cite{SSWPT} to the affinity graph using $K$ as the number of curve class to divide the index set $\{1,\dots,L\}$ into $K$ subsets $\{\Lambda_k\}_{1\leq k\leq K}$.

    \STATE Associate each index set $\Lambda_k$ with a class of curves $\{\psi_j\}_{j\in \Lambda_k}$.
    
  \end{algorithmic}
\end{algo}

If $\{\phi_k'(t)\}_{1\leq k\leq K}$ are not very similar, then the residual of the linear regression of $\frac{\phi_k'(t)}{\phi_j'(t)}$ is large for $k\neq j$. By setting up a proper parameter $\sigma$, Algorithm \ref{alg:curve} can classify $\{\psi_{kn}:1\leq k\leq K,|n|\geq 1\}$ accurately with high probability. In the case in which instantaneous frequencies are disturbed by noise, robust subspace clustering techniques in \cite{subspace2} can be applied.

Algorithm \ref{alg:curve} results in $K$ classes of curves $\{\psi_j\}_{j\in \Lambda_k}=\{\psi_{kn}\}_{|n|\geq 1}\approx \{nN_k\phi_k'\}_{|n|\geq 1}$ for $1\leq k\leq K$. The theorems below show how to estimate the instantaneous frequency $N_k\phi_k'(t)$ of the general mode $\alpha_k(t)s_k(2\pi N_k\phi_k(t))$.

\begin{theorem}
\label{thm:inst}
Suppose $\alpha(t)s(2\pi N_0\phi(t))$ is a general intrinsic mode function of type $(M_0,N_0)$ and $s(t)$ has some nonzero Fourier coefficients $\{\widehat{s}(n_i)\}_{1\leq i\leq N}$ such that $\widehat{s}(n)=0$ for $n<|n_1|$ and $\gcd (|n_1|,\dots,|n_N|)=1$. If we know $\psi_i(t)= n_iN_0\phi'(t)$ for ${1\leq i\leq N}$ and
\begin{equation}
\label{P3}
n_0=\min \left\{n:1\leq n\leq M,\frac{n\psi_i(t)}{\psi_1(t)} \text{ is a constant integer for }2\leq i\leq N\right\},
\end{equation}
where $M=M_0\min{|\psi_1(t)|}$, then the instantaneous frequency $N_0\phi'(t)$ of $\alpha(t)s(2\pi N_0\phi(t))$ is $\frac{|\psi_1(t)|}{n_0}$.
\end{theorem}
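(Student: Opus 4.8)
The plan is to reduce the assertion to the purely number-theoretic claim that $n_0 = |n_1|$, after which the stated formula is immediate. The enabling observation is that, since $\psi_i(t) = n_i N_0\phi'(t)$ for every $i$, the ratios appearing in the definition of $n_0$ are independent of $t$: one has $\psi_i(t)/\psi_1(t) = n_i/n_1$, so $\frac{n\psi_i(t)}{\psi_1(t)} = \frac{n\,n_i}{n_1}$. Consequently the requirement that this be a constant integer for $2 \le i \le N$ is nothing but the divisibility condition $|n_1| \mid n\,|n_i|$ for all such $i$ (the factor $n_1/|n_1| = \pm 1$ being irrelevant to integrality, and the case $i = 1$ holding automatically because $\frac{n\psi_1}{\psi_1} = n \in \Z$).

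First I would verify that $n = |n_1|$ is feasible, so that $n_0 \le |n_1|$. Integrality is clear since $\frac{|n_1|\,|n_i|}{|n_1|} = |n_i| \in \Z$, and feasibility also requires $|n_1| \le M$; this follows from
\[
M = M_0 \min_t|\psi_1(t)| = M_0\,|n_1|\,N_0 \min_t|\phi'(t)| \ge |n_1|\,N_0 \ge |n_1|,
\]
using $\min_t|\phi'| \ge 1/M_0$ from Definition \ref{def:GIMTF} together with $N_0 \ge 1$.

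The crux is the reverse bound $n_0 \ge |n_1|$, i.e.\ that no $1 \le n < |n_1|$ is feasible; this is exactly where the hypothesis $\gcd(|n_1|,\dots,|n_N|) = 1$ enters. Suppose $|n_1| \mid n\,|n_i|$ for every $i$. Writing $g = \gcd(n,|n_1|)$, $n = g n'$ and $|n_1| = g m$ with $\gcd(n',m) = 1$, the divisibility $gm \mid g n'\,|n_i|$ gives $m \mid n'\,|n_i|$, whence $m \mid |n_i|$ for all $i$ by coprimality of $m$ and $n'$. Thus $m$ divides $\gcd(|n_1|,\dots,|n_N|) = 1$, forcing $m = 1$ and $|n_1| = g = \gcd(n,|n_1|)$, i.e.\ $|n_1| \mid n$; since $n \ge 1$ this yields $n \ge |n_1|$. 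Combining the two bounds gives $n_0 = |n_1|$.

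Finally, substituting $n_0 = |n_1|$ and using $|\psi_1(t)| = |n_1|\,N_0\,|\phi'(t)| = |n_1|\,N_0\,\phi'(t)$ under the standard orientation $\phi' > 0$ (legitimate since $\phi'$ is continuous and bounded away from $0$), one obtains $\frac{|\psi_1(t)|}{n_0} = N_0\phi'(t)$, as claimed. The only genuinely nontrivial step is the coprimality argument pinning down $n_0 = |n_1|$; the rest is bookkeeping with positivity conventions and the feasibility bound. I would be careful to record the orientation convention $\phi' > 0$ explicitly and to note that the hypothesis $\widehat{s}(n) = 0$ for $n < |n_1|$ makes $|n_1|$ the least index magnitude carrying nonzero energy, so that the $n_i$ may be treated consistently as positive integers in the gcd computation.
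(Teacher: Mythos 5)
Your proposal is correct and follows essentially the same route as the paper: both reduce the theorem to showing $n_0=|n_1|$ via the divisibility argument that $\frac{|n_1|}{\gcd(n,|n_1|)}$ divides every $|n_i|$ and hence divides $\gcd(|n_1|,\dots,|n_N|)=1$, forcing $|n_1|\mid n$ (your $m$ is exactly the paper's $\frac{n_1}{\gcd(n,n_1)}$). Your additional checks — the feasibility bound $|n_1|\leq M$ via $\min_t|\phi'|\geq 1/M_0$ and the orientation convention $\phi'>0$ — are left implicit in the paper, so they are welcome but not a departure in method.
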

\begin{proof}
\begin{eqnarray}
& &\frac{n\psi_i(t)}{\psi_1(t)} \text{ is a constant integer for }2\leq i\leq N \Rightarrow \frac{nn_i}{n_1} \text{ is an integer for }2\leq i\leq N\nonumber\\
&\Rightarrow& \frac{n_1}{\gcd (n,n_1)}\big| n_i \text{ for }2\leq i\leq N \Rightarrow \frac{n_1}{\gcd (n,n_1)}\big| \gcd (n_2,\dots,n_N)\nonumber\\
&\Rightarrow& \gcd (n,n_1)=n_1 \Rightarrow n=k|n_1| \text{ for some integer }k\geq 1.\nonumber
\end{eqnarray}
Hence, $ |n_1|=\min \left\{n:1\leq n\leq M,\frac{n\psi_i(t)}{\psi_1(t)} \text{ is a constant integer for }2\leq i\leq N\right\}=n_0$, which implies $N_0\phi'(t)=\frac{|\psi_1(t)|}{n_0}$.
\end{proof}

Determining whether $\frac{n\psi_i(t)}{\psi_1(t)}$ is a constant integer is not practical unless the instantaneous frequencies are exactly recovered.
This motivates the design of the following method.

\begin{theorem}
\label{thm:inst2}
Suppose the same condition of Theorem \ref{thm:inst} holds and $n_0$ is the solution of the following minimization problem,
\begin{equation}
\label{P4}
n_0=\min\left(  \underset{1\leq n\leq M}{\arg\min} \left\{\frac{1}{N-1}\sum_{i=2}^N \|\frac{n\psi_i(t)}{\psi_1(t)} - \lfloor \frac{n\psi_i(t)}{\psi_1(t)} +0.5\rfloor \|_{L^2}^2\right\} \right),
\end{equation}
where $M=M_0\min{|\psi_1(t)|}$. Then the instantaneous frequency $N_0\phi'(t)$ of $s(2\pi N_0\phi(t))$ is $\frac{|\psi_1(t)|}{n_0}$.
\end{theorem}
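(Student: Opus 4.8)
The plan is to reduce Theorem~\ref{thm:inst2} to Theorem~\ref{thm:inst} by showing that the relaxed $L^2$ minimization \eqref{P4} selects exactly the same integer $n_0$ as the exact integrality criterion \eqref{P3}. The whole argument rests on one structural observation, so I expect the proof to be short.

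First I would use that, by hypothesis, $\psi_i(t)=n_iN_0\phi'(t)$ for every $i$, so that each $\psi_i$ shares the common factor $N_0\phi'(t)$. Consequently the ratio $\frac{n\psi_i(t)}{\psi_1(t)}=\frac{nn_i}{n_1}$ is a \emph{constant} in $t$ for every fixed $n$. Setting $c_i(n):=\frac{nn_i}{n_1}-\lfloor\frac{nn_i}{n_1}+0.5\rfloor$ (the signed distance from this constant to its nearest integer), and recalling that the $L^2$-norm of a constant over the normalized domain $[0,1]$ equals its absolute value, the objective in \eqref{P4} collapses to the purely arithmetic function
\[
F(n)=\frac{1}{N-1}\sum_{i=2}^N c_i(n)^2 .
\]

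Next I would analyze the minimizers of $F$ over $1\le n\le M$. Each summand is nonnegative, and $c_i(n)=0$ precisely when $\frac{nn_i}{n_1}\in\Z$; hence $F(n)=0$ if and only if $\frac{nn_i}{n_1}\in\Z$ for all $2\le i\le N$, which is exactly the feasibility condition of \eqref{P3} (the ``constant'' qualifier there being automatic in the present exact setting). To turn this equivalence into a statement about $\arg\min$, I must verify that the minimum value of $F$ is genuinely $0$ and is attained inside the search range. Taking $n=|n_1|$ gives $\frac{|n_1|n_i}{n_1}=\pm n_i\in\Z$, so $F(|n_1|)=0$; and $|n_1|\le M$ because $M=M_0\min_t|\psi_1(t)|=M_0|n_1|N_0\min_t|\phi'(t)|\ge|n_1|N_0\ge|n_1|$, using $|\phi'|\ge 1/M_0$ and $N_0\ge1$. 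Therefore $\arg\min_{1\le n\le M}F=\{n:F(n)=0\}$ coincides with the feasible set of Theorem~\ref{thm:inst}.

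Finally, since the two selection rules $\min(\arg\min F)$ and $\min\{n:\frac{nn_i}{n_1}\in\Z\ \forall i\}$ act on the same set, they return the same integer, so the $n_0$ of Theorem~\ref{thm:inst2} equals the $n_0$ of Theorem~\ref{thm:inst}, namely $|n_1|$; the conclusion $N_0\phi'(t)=\frac{|\psi_1(t)|}{n_0}$ is then immediate from Theorem~\ref{thm:inst}. The only delicate point---and the step I would treat as the main obstacle---is ruling out a spurious smaller minimizer: one must confirm $F(n)>0$ strictly for every $1\le n<|n_1|$, so that $\min(\arg\min F)=|n_1|$ rather than some smaller $n$. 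This strict positivity is exactly the content of the gcd computation already carried out in Theorem~\ref{thm:inst}, which shows that $\frac{nn_i}{n_1}$ fails to be an integer for some $i$ whenever $n<|n_1|$; invoking that computation closes the argument.
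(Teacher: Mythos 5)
Your proposal is correct and follows essentially the same route as the paper's proof: both reduce the objective to the arithmetic function $f(n)=\frac{1}{N-1}\sum_{i=2}^N\bigl(\frac{nn_i}{n_1}-\lfloor\frac{nn_i}{n_1}+0.5\rfloor\bigr)^2$ using that $\frac{n\psi_i(t)}{\psi_1(t)}=\frac{nn_i}{n_1}$ is constant in $t$, observe $f(n)=0$ exactly on multiples of $|n_1|$, and invoke the gcd computation from Theorem~\ref{thm:inst} to get strict positivity for all other $n$, so $\min(\arg\min f)=|n_1|$. Your additional check that $|n_1|\leq M$ lies within the search range is a small point the paper leaves implicit, but it does not change the argument.
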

\begin{proof}
Let $f(n)=\frac{1}{N-1}\sum_{i=2}^N \|\frac{n\psi_i(t)}{\psi_1(t)} - \lfloor\frac{n\psi_i(t)}{\psi_1(t)}+0.5\rfloor \|_{L^2}^2$. Notice that 
\[
\frac{n\psi_i(t)}{\psi_1(t)} - \lfloor\frac{n\psi_i(t)}{\psi_1(t)} +0.5\rfloor=\frac{nn_i}{n_1}-\lfloor\frac{nn_i}{n_1}+0.5\rfloor. 
\]
If $n=k|n_1|$ for some integer $k\geq 1$, then $f(n)=0$.

Otherwise, then by the proof of Theorem \ref{thm:inst}, $\exists i$, $2\leq i \leq N$ such that $\frac{nn_i}{n_1}$ is not an integer. Then 
\[
\|\frac{n\psi_i(t)}{\psi_1(t)} - \lfloor \frac{n\psi_i(t)}{\psi_1(t)}+0.5\rfloor \|_{L^2}^2>0,
\]
which implies $f(n)>0$. So, $|n_1|=\min\left(\underset{1\leq n\leq M}{\arg\min}  f(n)\right)$. Therefore, $N_0\phi'(t)=\frac{|\psi_1(t)|}{n_0}$.
\end{proof}
Notice that the function $g(x)=|x- \lfloor x+0.5\rfloor |$ is absolutely continuous. If $\psi_i(t)\approx n_iN_0\phi'(t)$ is sufficiently accurate, the conclusion of Theorem \ref{thm:inst2} is still true.

For each $k$, Theorem \ref{thm:inst2} estimates the instantaneous frequency of the general mode $\alpha_k(t)s_k(2\pi N_k\phi_k(t))$ using the result of Theorem \ref{thm:main} and Algorithm \ref{alg:curve}. This completes the estimates of the instantaneous frequencies of the general modes from a superposition of the form \eqref{P2}.

\subsection{The analysis of spectral resolution}
\label{sub:RA}
In Theorem \ref{thm:main}, the lower bound $s>1/2$ ensures that the wave packets is sufficiently localized in space so that it can reflect the second order properties of the phase functions precisely. The upper bound $s<1$ enables the SSWPT to detect a more general class of shape functions compared to the wave shape functions defined in \cite{Hau-Tieng2013}. An intuitive reason for this more general result is that the supports of wave packets in the Fourier domain are more localized than those of wavelets, resulting in a better resolution for mode decompositions. In what follows, the single scale resolution and the multiscale resolution of synchrosqueezed transforms will be defined and studied.

\begin{definition}
\label{def:singlescale}
The single scale resolution at a level $N$ of a synchrosqueezed transform is $\frac{1}{N\lambda_0}-\frac{1}{N}$, where $\lambda_0\in(0,1)$ is the critical number such that $\forall \lambda\in(0,\lambda_0)$, the synchrosqueezed transform is able to distinguish two modes $f_1(t)=e^{2\pi i Nt}$ and $f_2(t)=e^{2\pi i\lambda N t}$ from their superposition $f(t)=f_1(t)+f_2(t)$.
\end{definition}
The single scale resolution analysis is related to the beating phenomenon of the EMD method in \cite{EMD_ans}. In \cite{Syn_ans}, the authors have proved a conclusion which is equivalent to the fact that the single scale resolution at the level $1$ of the SSWT with a mother wavelet supported in an interval of size $2d$ is $\frac{2d}{1-d}$. As we shall prove below, the SSWPT has a higher single scale resolution than the SSWT and a smaller geometric scale parameter $s$ benefits a higher resolution. This means that the SSWPT has a better ability to distinguish two harmonics with close frequencies. 

Recall that the wave packet transform is controlled by the geometric parameter $s$ and the parameter $d$ for the size of the support of the mother wave packet in the Fourier domain. Consider two complex harmonics $f_1(t)=e^{2\pi i Nt}$, $f_2(t)=e^{2\pi i\lambda N t}$, and their superposition $f(t)=f_1(t)+f_2(t)$. Then the wave packet transform of $f$ is
\begin{equation}
\label{rel:1}
W_{f}(a,b)=|a|^{-s/2}e^{2\pi iNb}\widehat{w}(|a|^{-s}(a-N))+|a|^{-s/2}e^{2\pi iN\lambda b}\widehat{w}(|a|^{-s}(a-N\lambda)),\nonumber
\end{equation}
and the instantaneous frequency information function is
\[
v_f(a,b)=\frac{N\left(e^{2\pi iNb}\widehat{w}(|a|^{-s}(a-N))+\lambda e^{2\pi iN\lambda b}\widehat{w}(|a|^{-s}(a-N\lambda ))\right)}{e^{2\pi iNb}\widehat{w}(|a|^{-s}(a-N))+ e^{2\pi iN\lambda b}\widehat{w}(|a|^{-s}(a-N\lambda ))}.
\]
The necessary and sufficient condition of an exact decomposition by the SSWPT is $v_f(a,b)=N$ in $Z_{f_1}=\{(a,b):W_{f_1}(a,b)\neq 0\}$ and $v_f(a,b)=N\lambda $ in $Z_{f_2}=\{(a,b):W_{f_2}(a,b)\neq 0\}$. This is equivalent to say $Z_{f_1}$ and $Z_{f_2}$ are disjoint. Since 
\[
W_{f_1}(a,b)=|a|^{-s/2}e^{2\pi iNb}\widehat{w}(|a|^{-s}(a-N)),
\]
\[
W_{f_2}(a,b)=|a|^{-s/2}e^{2\pi iN\lambda b}\widehat{w}(|a|^{-s}(a-N\lambda )),
\]
and the support of $\widehat{w}(\xi)$ is $(-d,d)$, the condition for $Z_{f_1}$ and $Z_{f_2}$ being disjoint is that the supports of the wave packets $\widehat{w_{N\lambda  b}}$ and $\widehat{w_{Nb}}$ are disjoint for all $b$, i.e., $\widehat{w}(|N\lambda |^{-s}(x-N\lambda ))$ and $\widehat{w}(|N|^{-s}(x-N))$ have non-overlapping supports. So, the critical number $\lambda_0$ of the SSWPT with a geometric scaling parameter $s$ is the solution of the following equations:
\begin{equation}
\left\{
\begin{array}{rl}
N-a_1&=d|a_1|^s, \nonumber\\ 
a_2-N\lambda _0&=d|a_2|^s,\nonumber\\
a_1&=a_2,\nonumber
\end{array}
\right.
\end{equation}
where $N$ and $d$ are known. When $s=1$, the solution is $\lambda _0=\frac{1-d}{1+d}$, which accords with the result in \cite{Syn_ans}. Let $a_1=a_2=a$, then we have $\lambda _0=\frac{2a-N}{N}$. Notice that $\lambda _0$ is increasing, when $s$ is decreasing (see Figure \ref{fig:resolution}). Therefore, a smaller $s$ benefits a higher single scale resolution, especially for high frequency signals.

\begin{figure}[ht!]
  \begin{center}
    \begin{tabular}{ccc}
      \includegraphics[height=1.6in]{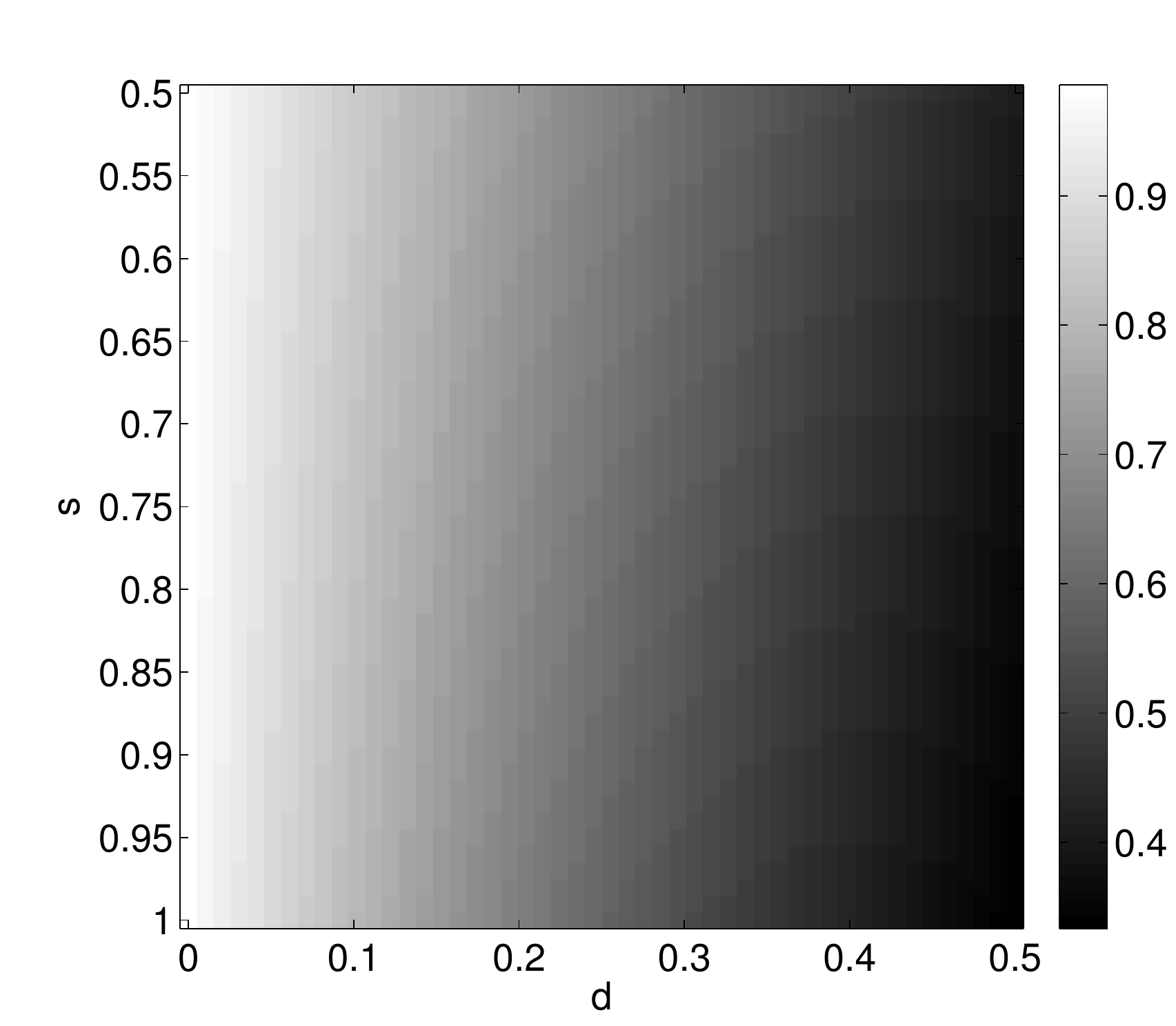} & \includegraphics[height=1.6in]{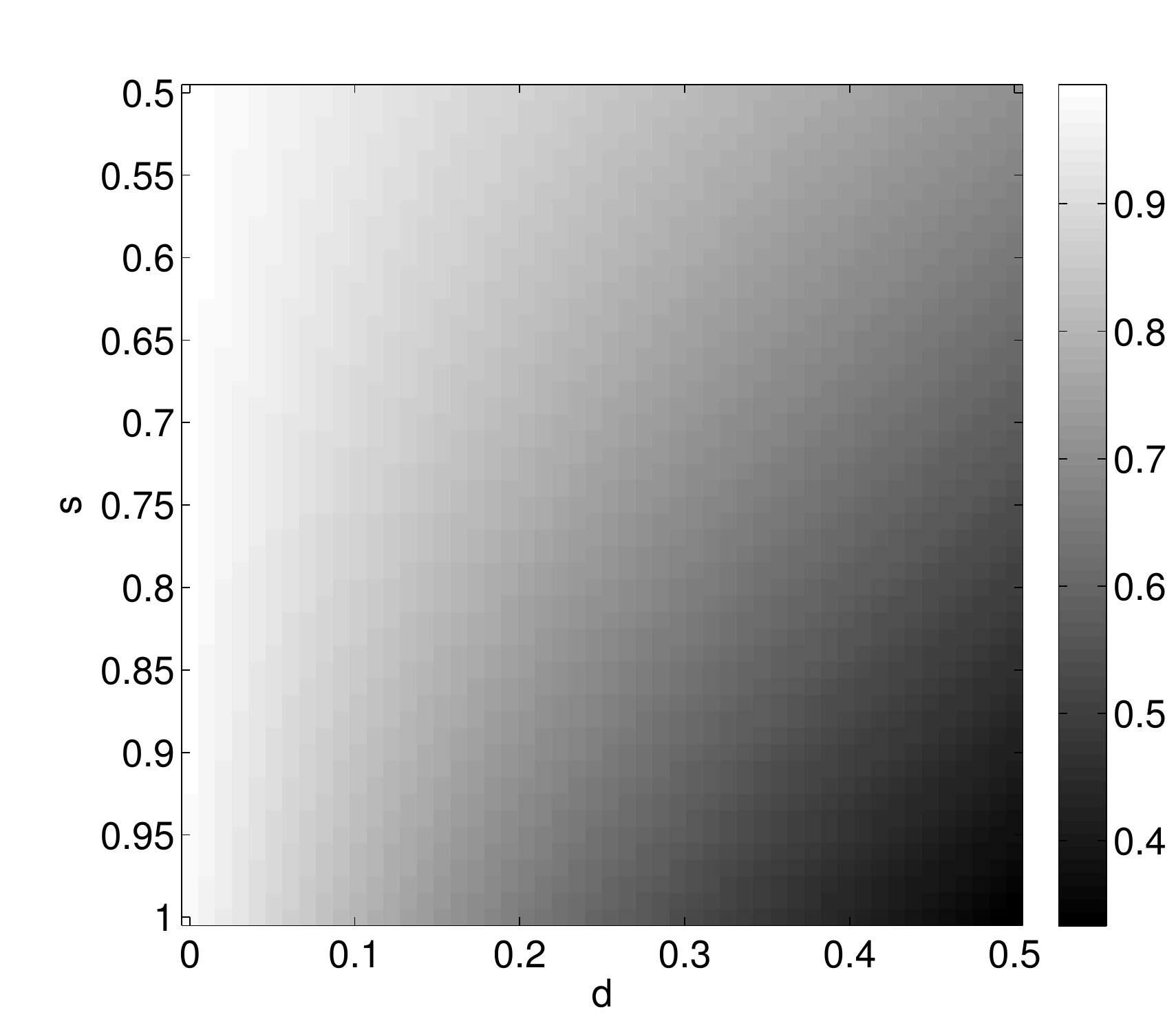} & \includegraphics[height=1.6in]{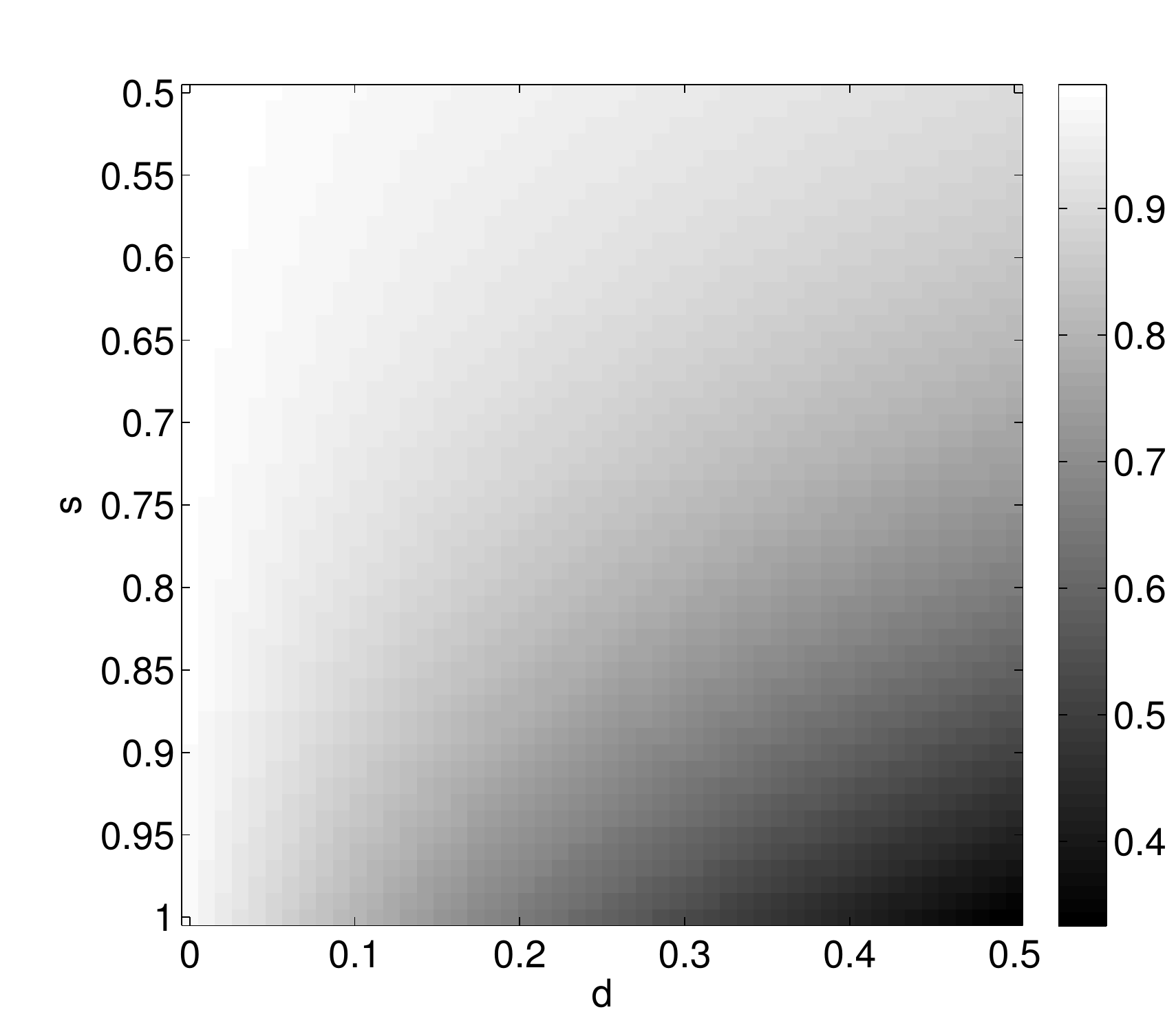}\\
      \includegraphics[height=1.6in]{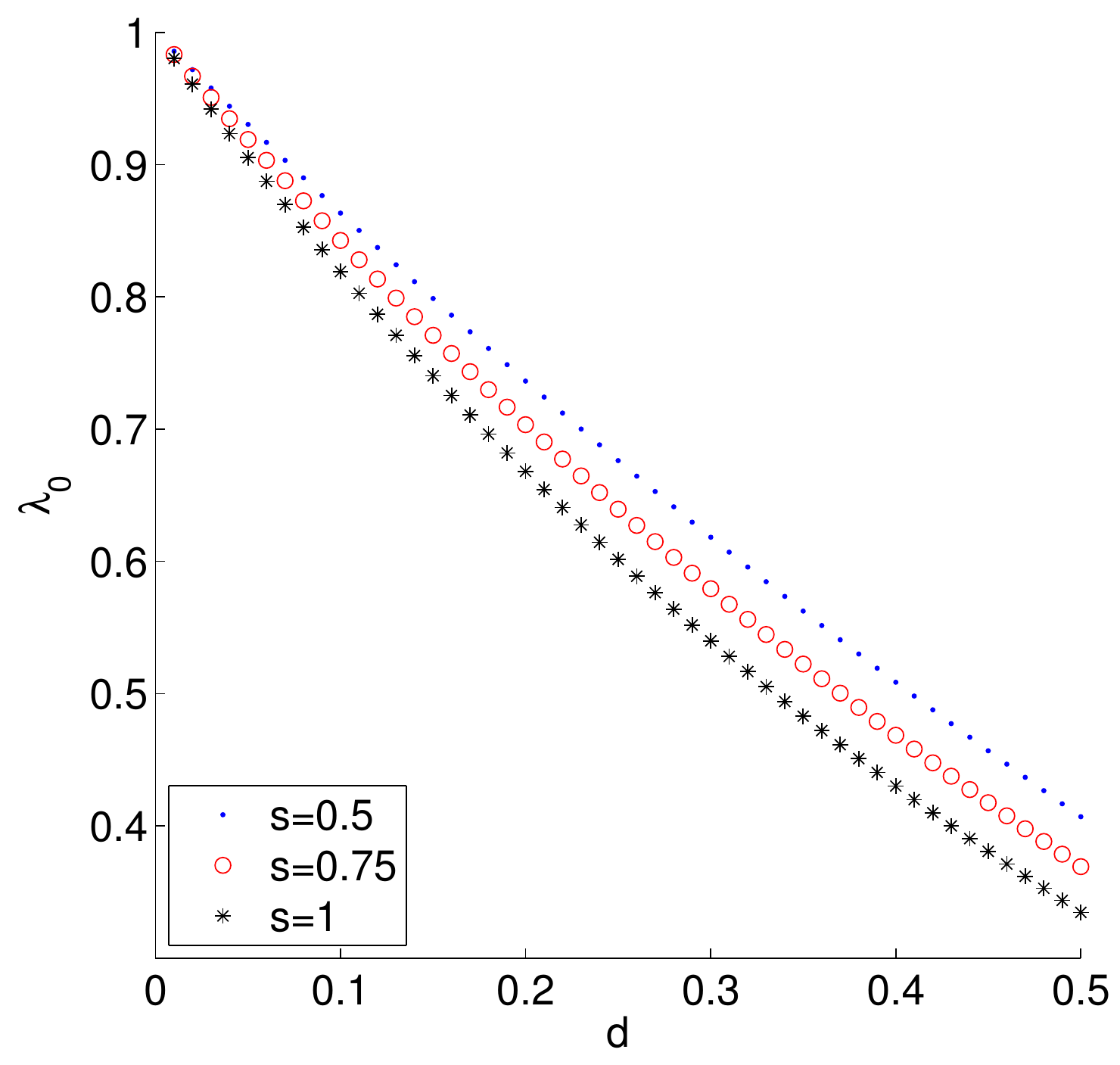} & \includegraphics[height=1.6in]{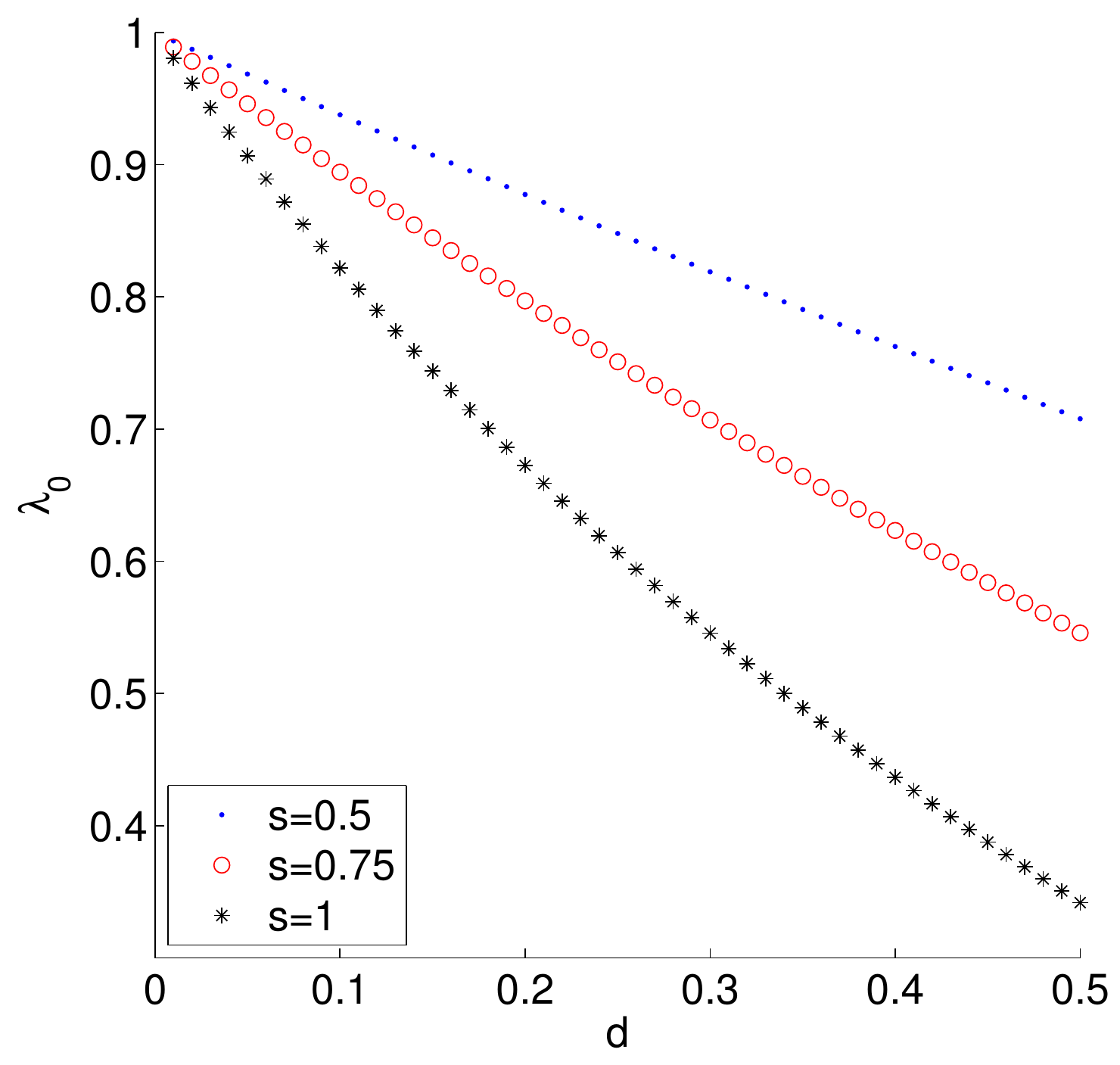} & \includegraphics[height=1.6in]{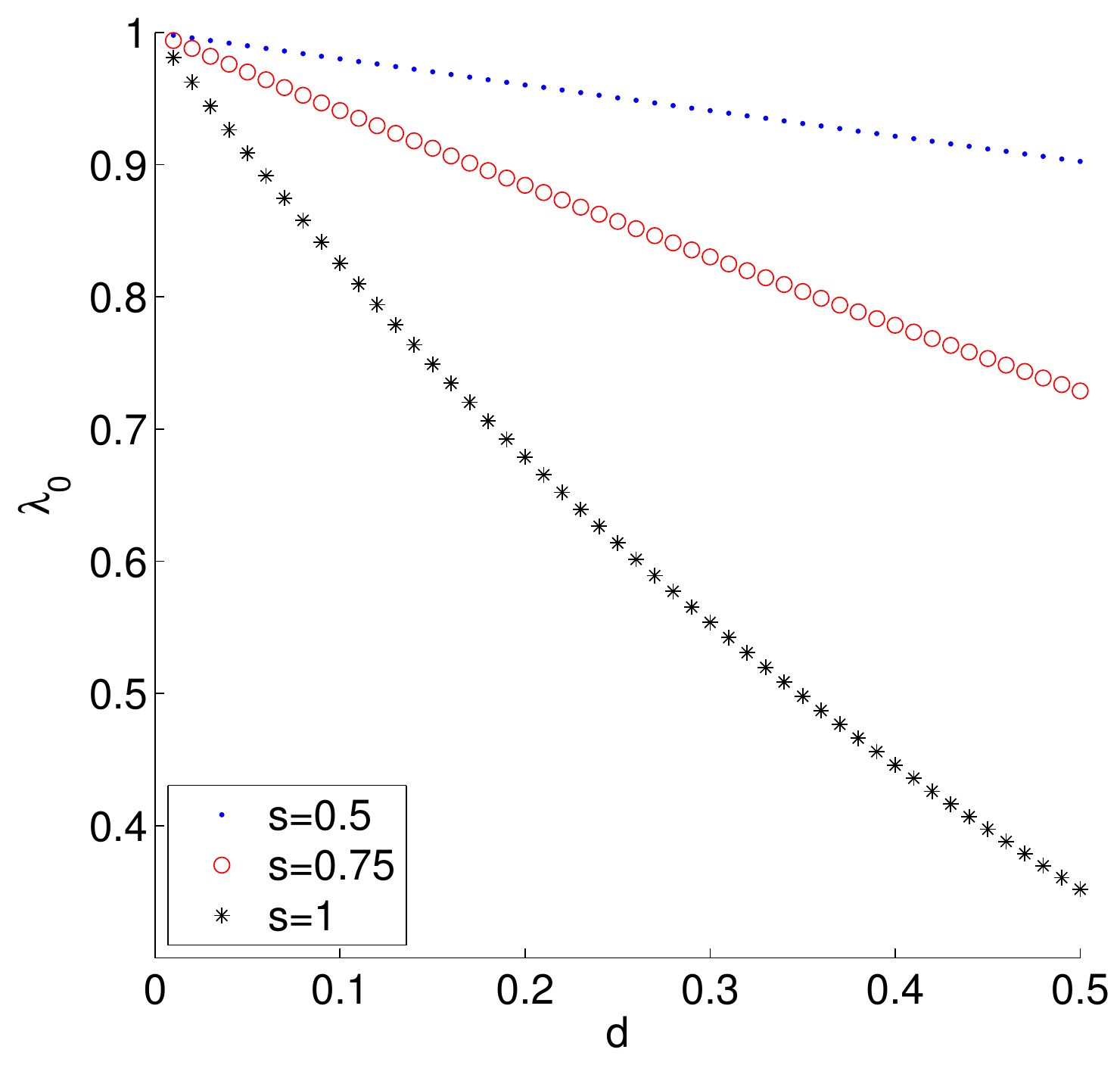}\\
    \end{tabular}
  \end{center}
  \caption{Top row: The critical number $\lambda_0$ when $s\in[0.5,1]$ and $d\in[0,0.5]$. Bottom row: The critical number $\lambda_0$ for fixed $s=0.5$, $0.75$, and $1$. Left column: $N=2$. Middle column: $N=10$. Right column: $N=100$. The critical number $\lambda_0$ for $s\approx 0.5$ is much larger than that for $s=1$ when the frequency level $N$ is large and the difference becomes more significant when the frequency is higher.}
\label{fig:resolution}
\end{figure}

For the mode decomposition problems of the form \eqref{P1}, the single scale resolution is enough to quantify the resolution of a certain synchrosqueezed transform. However, for the general mode decomposition problems of the form \eqref{P2}, each general mode $\alpha_k(t)s_k(2\pi N_k\phi_k(t))$ would result in multiple instantaneous frequencies $nN_k\phi_k'(t)$, i.e., a superposition of infinitely many terms $\widehat{s_k}(n)\alpha_k(t)e^{2\pi i nN_k\phi_k(t)}$. It is important to know how many multiple instantaneous frequencies can be identified by the synchrosqueezed transform.

\begin{definition}
The multiscale resolution at a level $N$ of a certain synchrosqueezed transform is $\frac{1}{Nk}-\frac{1}{N(k+1)}$, where $k$ is the largest number of the components such that the synchrosqueezed transform can distinguish all the components in 
\begin{equation*}
f(t)=\sum_{n=1}^{k} e^{2\pi inNt}.
\end{equation*}
\end{definition}

By definition, a synchrosqueezed transform with a smaller multiscale resolution can distinguish more Fourier expansion terms $\widehat{s_k}(n)\alpha_k(t)e^{2\pi i nN_k\phi_k(t)}$, so that one would obtain a better recovery of $\alpha_k(t)s_k(2\pi N_k\phi_k(t))$ by combining these recovered Fourier expansion terms.

For a superposition 
\begin{equation}
\label{multi}
f(t)=\sum_{n=1}^{K} e^{2\pi inNt}
\end{equation}
with $K$ arbitrarily large, the wave packet transform is
\[
W_f(a,b)=\sum_{n=1}^{K}|a|^{-s/2}e^{2\pi inNb}\widehat{w}\left(|a|^{-2}(a-nN)\right)
\]
and the instantaneous frequency information function is 
\[
v_f(a,b)=\frac{N\sum_{n=1}^{K}ne^{2\pi inNb}\widehat{w}\left(|a|^{-s}(a-nN)\right)}{\sum_{n=1}^{K}e^{2\pi inNb}\widehat{w}\left(|a|^{-s}(a-nN)\right)}.
\]
Each term $\widehat{w}\left(|a|^{-s}(a-nN)\right)$ is supported in $Z_{n}=\{a:|a-nN|<d|a|^{s}\}$ centered at $a=nN$. Similar to previous discussions, an exact recovery of the $n$th term is equivalent to $Z_{n-1}\bigcap Z_{n}=\emptyset$ and $Z_{n}\bigcap Z_{n+1}=\emptyset$. Since the size of the interval $Z_n$ is monotonously increasing as $n$ increases and the space between their centers is fixed, one should identify $n_0$, the greatest n such that $Z_n\bigcap Z_{n+1}= \emptyset$, i.e., $n_0=\max\{n\in \Z^+:nN<a_0\}$, where $a_0=\left(\frac{N}{2d}\right)^{1/s}$ is a solution of the following equations,
\begin{equation}
\left\{
\begin{array}{rl}
(n+1)N-a&=d|a|^s, \nonumber\\ 
a-nN&=d|a|^s.\nonumber
\end{array}
\right.
\end{equation}
Hence, $n_0=\lfloor  \frac{N^{\frac{1}{s}-1}}{(2d)^{1/s}} \rfloor$. Therefore, the multiscale resolution at the level $N$ of the SSWPT with a scaling parameter $s$ and a mother wave packet supported in $(-d,d)$ is 
\[
\frac{1}{(n_0-1)N}-\frac{1}{n_0N}=\frac{1}{N(\lfloor  \frac{N^{\frac{1}{s}-1}}{(2d)^{1/s}} \rfloor-1) \lfloor  \frac{N^{\frac{1}{s}-1}}{(2d)^{1/s}} \rfloor}  \approx O(N^{1-2/s}).
\] 

Notice that the synchrosqueezed wavelet transform (i.e., $s=1$) can only distinguish $O(1)$ terms in \eqref{multi}. This limits its application to general mode decompositions of the form \eqref{P2}. However, the SSWPT is able to identify $O(N^{\frac{1}{s}-1})$ terms exactly in \eqref{multi}. This motivates its application to the general mode decomposition problems of the form \eqref{P2}.  Concrete examples will be presented to support this argument in Section \ref{sec:results}.

\section{Theory for the diffeomorphism based spectral analysis (DSA)}
\label{sec:diff}
The analysis of the DSA method essentially consists of two main theorems. Theorem \ref{thm:main2} proves that the 1D SSWPT is able to provide accurate input instantaneous frequency estimates if the weak well-separation condition defined below holds. Then Theorem \ref{thm:main3} proves that Step $4$, the key idea of the DSA method, can provide precise spectral analysis for general shape functions, if their corresponding phase functions are well-different and steep enough. We omit the proof of the other steps in the DSA method to save space.

\begin{definition}
 A function $f(t)$ is a weak well-separated general superposition of type
  $(M,N,K,s)$ if
  \[
  f(t)=\sum_{k=1}^K f_k(t)
  \] 
  where each $f_k(t)=\alpha_k(t)s_k(2\pi N_k \phi_k(t))$ is a GIMT of type $(M,N_k)$ such that $N_k\geq N$ and the phase functions satisfy the following weak well-separation conditions. 
\begin{enumerate}
\item Suppose 
\[
Z_{nk}=\left\{(a,b):|a-nN_k\phi_k'(b)|\leq d|a|^{s}\right\}.
\]
For each $k$, there exists $n_k$ such that $\widehat{s_k}(n_k)\neq 0$ and $Z_{n_kk}\cap Z_{nj}=\emptyset$ for all pairs $(n,j)\neq (n_k,k)$ and $\widehat{s_j}(n)\neq 0$.
\item $\exists K_0<\infty$ such that $\forall a\in\R$ and $\forall b\in \R$ there exists at most $K_0$ pairs of $(n,k)$ such that $(a,b)\in Z_{nk}$.
\end{enumerate}
We denote by $wGF(M,N,K_0,K,s)$ the set of all such functions. Note that $wGF(M,N,1,K,s)=GF(M,N,K,s)$.
\end{definition}
In what follows, when we write $O(\cdot)$, $\lesssim$, or $\gtrsim$, the implicit constants may depend on $M$, $K$ and $K_0$.
\begin{theorem}
  \label{thm:main2}
  For a function $f(t)$ and $\eps>0$, we define
  \[
  R_{\eps} = \{(a,b): |W_f(a,b)|\geq |a|^{-s/2}\sqrt \eps\}
  \]
  and 
  \[
  Z_{n,k} = \{(a,b): |a-nN_k\phi_k'(b)|\leq d|a|^s \}
  \]
  for $1\le k\le K$ and $|n|\geq 1$. For fixed $M$, $K_0$, $K$ and $\forall \eps>0$, there
  exists a constant $N_0(M,K_0,K,s,\eps)>0$ such that
  $\forall N>N_0$ and $f(t)\in wGF(M,N,K_0,K,s)$ the following statements
  hold.
  \begin{enumerate}[(i)]
  \item For each $j$, there exists $n_j$ such that $\widehat{s_j}(n_j)\neq 0$ and $Z_{n_jj}\cap Z_{nk}=\emptyset$ for all pairs $(n,k)\neq (n_j,j)$ and $\widehat{s_k}(n)\neq 0$;
  \item For any $(a,b) \in R_{\eps} \cap Z_{n_j,j}$, 
    \[
    \frac{|v_f(a,b)-n_jN_j\phi_j'(b)|}{ |n_jN_j \phi_j'(b)|}\lesssim\sqrt \eps.
    \]
\item For each $j$, let 
\[
l_{n_j}(b)=\min \left\{a:(a,b)\in R_\epsilon\cap Z_{n_jj}\right\},\quad u_{n_j}(b)=\max\left\{a:(a,b)\in R_\epsilon\cup Z_{n_jj}\right\}.
\]
Suppose $v_f(a,b)\neq \infty$. If $a\leq l_{n_j}(b)$, then $v_f(a,b)\leq l_{n_j}(b)(1+O(\sqrt{\epsilon}))$. If $a\geq u_{n_j}(b)$, then $v_f(a,b)\geq u_{n_j}(b)(1-O(\sqrt{\epsilon}))$.
  \end{enumerate}
\end{theorem}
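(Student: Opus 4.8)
The plan is to treat the three claims in order. Part (i) carries no analytic content: it is exactly the first weak well-separation condition in the definition of $wGF(M,N,K_0,K,s)$, which for each index $j$ postulates an isolated band $Z_{n_j,j}$ with $\widehat{s_j}(n_j)\neq0$ disjoint from every other $Z_{n,k}$ with $\widehat{s_k}(n)\neq0$, so (i) is recorded directly from the hypothesis. Part (ii) I would reduce to the single-mode estimate already proved in Theorem \ref{thm:main}. The point is that on $R_\eps\cap Z_{n_j,j}$ the isolation of $Z_{n_j,j}$ collapses the sums in Lemma \ref{lemma1} and Lemma \ref{lemma2} to the single pair $(n_j,j)$: every other amplitude carries a factor $\widehat{w}\big((a-nN_k\phi_k'(b))|a|^{-s}\big)$ that vanishes because $(a,b)\notin Z_{n,k}$ and $\widehat w$ is supported in $(-d,d)$. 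Writing $g$ for the surviving term, $v_f=\frac{n_jN_j\phi_j'(b)\,g+|a|O(\eps)}{g+O(\eps)}$, and since $(a,b)\in R_\eps$ forces $|g|\gtrsim\sqrt\eps$ while $a\in[\tfrac{n_jN_j}{2M},2Mn_jN_j]$, the identical algebra as in Theorem \ref{thm:main} gives the relative error $\lesssim\sqrt\eps$.

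Part (iii) is the new statement and where the work lies. I would again start from Lemmas \ref{lemma1} and \ref{lemma2}, writing, for any admissible $(a,b)$ with $W_f(a,b)\neq0$, $v_f(a,b)=a+\frac{\sum(c_{n,k}-a)\,g_{n,k}+|a|O(\eps)}{\sum g_{n,k}+O(\eps)}$, where $c_{n,k}:=nN_k\phi_k'(b)$, $g_{n,k}:=\widehat{s_k}(n)\alpha_k(b)e^{2\pi i nN_k\phi_k(b)}\widehat w\big((a-c_{n,k})|a|^{-s}\big)$, and the sum runs over the at most $K_0$ pairs with $(a,b)\in Z_{n,k}$ (finite by the second weak-separation condition). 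The geometric heart is that a nonzero $g_{n,k}$ forces $c_{n,k}\in[a-d|a|^s,a+d|a|^s]$; hence for $a\le l_{n_j}(b)$ every contributing center satisfies $c_{n,k}\le a+d|a|^s\le l_{n_j}(b)+d|l_{n_j}(b)|^s$, and since contributing scales obey $a\gtrsim N$, the correction $d|a|^{s-1}=O(N^{s-1})$ is absorbed into $O(\sqrt\eps)$ once $N>N_0$; thus all contributing centers lie below $l_{n_j}(b)(1+O(\sqrt\eps))$. The symmetric statement $c_{n,k}\ge u_{n_j}(b)(1-O(\sqrt\eps))$ holds when $a\ge u_{n_j}(b)$.

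With the centers confined, the clean regime is $(a,b)\in R_\eps$: there $|\sum g_{n,k}+O(\eps)|=|a|^{s/2}|W_f(a,b)|\ge\sqrt\eps$, while $\sum|g_{n,k}|\lesssim1$ over the $\le K_0$ terms, so $|v_f(a,b)-a|\le\frac{d|a|^s+|a|O(\eps)}{\sqrt\eps}\lesssim|a|\sqrt\eps$ using $d|a|^{s-1}\le\eps$ for $N$ large; combined with $a\le l_{n_j}(b)$ this yields $v_f(a,b)\le l_{n_j}(b)(1+O(\sqrt\eps))$, and the reversed inequalities give the $u_{n_j}$ bound.

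The hard part will be the low-energy points where $0<|W_f(a,b)|<|a|^{-s/2}\sqrt\eps$: there the denominator $\sum g_{n,k}+O(\eps)$ is no longer bounded below, and since the $g_{n,k}$ carry the oscillatory phases $e^{2\pi i nN_k\phi_k(b)}$ they can nearly cancel while $\sum|g_{n,k}|$ remains of order one, so $v_f$ becomes a genuinely non-convex combination of the confined centers and need not respect the bound dictated by the centers alone. On the isolated band $Z_{n_j,j}$ this is harmless, since isolation leaves a single term and $v_f\approx c_{n_j,j}$ stays within $d|a|^s$ of $a$. The delicate case is the overlapping region strictly below $l_{n_j}(b)$; there I would use the finite-multiplicity bound $K_0$ together with the confinement $|c_{n,k}-a|\le d|a|^s$ to try to match any near-cancellation in the denominator by a correspondingly small deviation numerator, and I expect this balancing between denominator degeneration and numerator smallness — or, failing a clean pointwise bound, restricting attention to the points that actually carry weight in $T_f(v,b)$ — to be the main obstacle of the proof.
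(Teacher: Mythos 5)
Your proposal follows the paper's proof essentially step for step: (i) is read off directly from the weak well-separation condition, (ii) is delegated to the argument of Theorem \ref{thm:main}(ii) after Lemmas \ref{lemma1} and \ref{lemma2} collapse to the single surviving term on $Z_{n_j,j}$, and for (iii) the paper performs exactly your computation, writing $\frac{v_f(a,b)-a}{a}=\frac{\sum_{(k,n)\in\Omega_a}(nN_k\phi_k'(b)-a)g_{n,k}+|a|O(\eps)}{a|a|^{s/2}W_f(a,b)}$, bounding the numerator by $K_0 d|a|^s+|a|O(\eps)$ via the support of $\widehat{w}$ and the multiplicity bound $K_0$, bounding the denominator below by $|a|\sqrt{\eps}$ via $(a,b)\in R_\eps$, and absorbing $d|a|^{s-1}$ into $O(\sqrt{\eps})$ for $|a|\gtrsim N\gtrsim \eps^{-1/(1-s)}$, so that $v_f(a,b)=a(1+O(\sqrt{\eps}))$ and the two monotone conclusions follow.

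The ``hard part'' you flag at the end --- points with $0<|W_f(a,b)|<|a|^{-s/2}\sqrt{\eps}$, where near-cancellation among overlapping $g_{n,k}$ can make $v_f$ deviate wildly from the confined centers --- is in fact not treated in the paper at all: the paper's proof of (iii) invokes the lower bound $|W_f(a,b)|\geq |a|^{-s/2}\sqrt{\eps}$ without comment, i.e.\ it silently restricts to $(a,b)\in R_\eps$, even though the statement only assumes $v_f(a,b)\neq\infty$. Your two-term cancellation scenario (take $g_1\approx 1$, $g_2\approx -1+\delta$ with distinct centers, so the quotient blows up as $\delta\to 0$) shows that no pointwise bound can hold off $R_\eps$, so what you identified is an imprecision in the theorem statement rather than a step you were missing; it is harmless for the intended application because points outside $R_\eps$ carry only $O(\eps)$ weight in $T_f(v,b)$, which is presumably why the restriction is left implicit. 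In short, your proof is complete relative to what the paper actually proves, and your unresolved obstacle is a correct diagnosis of the gap between the statement as written and its proof.
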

\begin{proof}
The weak well-separation condition implies $(i)$. $(ii)$ is true by the same argument of Theorem \ref{thm:main} $(ii)$. We only need to prove $(iii)$. Let us recall that \[\Omega_a=\{(k,n):a\in[\frac{nN_k}{2M},2MnN_k]\}.\] 
By Lemma \ref{lemma1}
\begin{eqnarray}
W_f(a,b)&=&|a|^{-s/2}\left(\sum_{(k,n)\in\Omega_a}\widehat{s_k}(n)\alpha_k(b)e^{2\pi inN_k\phi_k(b)}\widehat{w}\left(\left(a-nN_k\phi_k'(b)\right)|a|^{-s}\right)+O(\eps)\right),\nonumber\nonumber
\end{eqnarray}
as the other terms drop out.
Similarly, by Lemma \ref{lemma2}
\begin{eqnarray}
& &\partial_b W_f(a,b)\nonumber\\
&=&|a|^{-s/2}\left(\sum_{(k,n)\in\Omega_a }2\pi inN_k\widehat{s_k}(n)\alpha_k(b)\phi_k'(b)e^{2\pi inN_k\phi_k(b)}\widehat{w}\left(\left(a-nN_k\phi_k'(b)\right)|a|^{-s}\right)+|a|O(\eps)\right).\nonumber
\end{eqnarray}
Let $g_{n,k}$ denote the term $\widehat{s_k}(n)\alpha_k(b)e^{2\pi inN_k\phi_k(b)}\widehat{w}\left(\left(a-nN_k\phi_k'(b)\right)|a|^{-s}\right)$, then 
\begin{eqnarray}
\frac{v_f(a,b)-a}{a}&=&\frac{\partial_bW_f(a,b)-2\pi iaW_f(a,b)}{2\pi iaW_f(a,b)}\nonumber\\
&=&\frac{\sum_{(k,n)\in\Omega_a }(nN_k\phi_k'(b)-a)g_{n,k}+|a|O(\eps)}{a|a|^{s/2}W_f(a,b)},\nonumber
\end{eqnarray}
since $a\in[\frac{nN_k}{2M},2MnN_k]$ for $(k,n)\in\Omega_a$. Because $f(t)\in wGF(M,N,K_0,K,s)$, the number of $g_{n,k}$ not vanishing is at most $K_0$. Because $|W_f(a,b)|\geq |a|^{-s/2}\sqrt{\eps}$ for $(a,b)\in R_\eps$, $\left|a-nN_k\phi_k'(b)\right|\leq|a|^{s}d$ for $g_{n,k}$ not vanishing, and $\left|g_{n,k}\right|\lesssim 1$, then
\[
\left| \frac{v_f(a,b)-a}{a} \right|\lesssim \frac{K_0|a|^sd+|a|O(\epsilon)}{|a|\sqrt{\epsilon}}\lesssim \sqrt{\epsilon},
\]
if $|a|\gtrsim N\gtrsim \epsilon^{\frac{-1}{1-s}}$.
Therefore, if $a\leq l_{n_j}(b)$, then
\[
v_f(a,b)\leq a(1+O(\sqrt{\epsilon}))\leq l_{n_j}(b)(1+O(\sqrt{\epsilon}))
\]
for $N$ sufficiently large.
If $a\geq u_{n_j}(b)$, then 
\[
v_f(a,b)\geq a(1-O(\sqrt{\epsilon}))\geq u_{n_j}(b)(1-O(\sqrt{\epsilon}))
\]
for $N$ large enough.
\end{proof}
Theorem \ref{thm:main2} $(ii)$ shows that the instantaneous frequency information $v_f(a,b)$ can estimate instantaneous frequencies $\{n_jN_j\phi_j'(t)\}_{j=1}^K$ of some well-separated Fourier expansion terms accurately so that the energy of $\widehat{s_j}(n_j)\alpha_j(t)e^{2\pi in_jN_j\phi_j(t)}$ is squeezed to sharpened areas around $n_jN_j\phi_j'(t)$. Theorem \ref{thm:main2} $(iii)$ implies that the synchrosqueezed energy distribution $T_f(a,b)$ has well-separated and sharp supports around $\{n_jN_j\phi_j'(t)\}_{j=1}^K$, each of which only corresponds to $\widehat{s_j}(n_j)\alpha_j(t)e^{2\pi in_jN_j\phi_j(t)}$. This guarantees the accurate estimate of $n_jN_j\phi_j'(t)$ and the precise extraction of $\widehat{s_j}(n_j)\alpha_j(t)e^{2\pi in_jN_j\phi_j(t)}$.

Next, Theorem \ref{thm:main3} below shows that the DSA method with exact estimates of the instantaneous frequencies is able to provide accurate spectral analysis of the general shape functions, if the phase functions are well-different and steep sufficiently.

Since $f(t)$ is defined in $\R$ with non-vanishing amplitudes, we consider the following short-time Fourier transform with real-valued, non-negative and smooth window function $w_1(t)$ compactly supported in $(-1,1)$ such that $|\widehat{w_1}|$ has a sheer peak around the origin and rapidly decays elsewhere.
\begin{definition}
Given the window function $w_1(t)$ and a parameter $T>1$, the short-time Fourier transform of a function $f(t)$ with a parameter $T$ is a function
\[
\F_T(f)(a,b)=\int_{\R}f(t)w_T(t-b)e^{-2\pi iat}dt
\]
for $a,b\in\R$, where $w_T(t)=w_1(t/T)$ and $\F_T$ denote the short-time Fourier transform operator with the parameter $T$.
\end{definition}
\begin{definition}
  \label{def:WD} 
For $M>0$ and $K>0$, the phase functions $\{\phi_k(t)\}_{1\leq k\leq K}$ are well-different of type $(M,K)$ at $b\in \R$, if they satisfy the following conditions.
\begin{enumerate}
\item For any $T>0$, the number of extrema of $\phi_k\circ\phi_j^{-1}(t)$ in $(b-T,b+T)$ is at most $TM$ for $k\neq j$.
\item For any $T>0$ there exists $\eta_0>0$, $\eta_1>0$ and $N_0(M,K,T,b)$ such that $\forall a\in (\frac{1}{2M^2},2M^2)$ and $\forall N>N_0(M,K,T,b)$
\begin{align*}
\lambda^*\left( \left\{   t:\left|\partial_t\left(\phi_k(\phi_j^{-1}(t))\right)-a\right|\leq \frac{1}{N^{1-\eta_0}}\right\}\cap \left\{ t:b-T\leq t \leq b+T \right\}\right)\lesssim O(\frac{1}{N^{\eta_1}})
\end{align*}
for $k\neq j$, where $\lambda^*(\cdot)$ denotes the Lebesgue measure and $\lesssim$ means the implicit constant may depend on $M$, $K$, $T$ and $b$.
\end{enumerate}
\end{definition}
The definition of well-different phase functions is crucial to general mode decompositions. The difference of phase functions is the key feature for grouping the Fourier expansion terms of the general modes. If two phase functions are similar, their corresponding general modes would have similar evolution patterns. It is reasonable to combine them as one general mode. On the other hand, the well-difference of phase functions guarantees that the key idea of the DSA method can provide accurate spectral information of general shape functions, as proved in the following theorem.
\begin{theorem}
  \label{thm:main3}
Suppose $f(t)=\sum_{k=1}^Kf_k(t)$, where $f_k(t)=\alpha_k(t)s_k(2\pi N_k\phi_k(t))$ is a GIMT of type $(M,N_k)$ with $N_k\geq N$ and the phase functions $\{\phi_k(t)\}_{1\leq k\leq K}$ are well-different of type $(M,K)$ at $b$. Let $s_0=\max\limits_{(k,n)}\left|\widehat{s_k}(n)\right|$. Define 
\[
h_k(t)=\frac{f\circ \phi_k^{-1}(t)}{\alpha_k\circ \phi_k^{-1}(t)}
\]
for $1\leq k\leq K$. For fixed $M$, $K$, $b$, $s_0$ and $\delta>0$, $\exists T_0(M,K,s_0,\delta,b)$, $\forall T>T_0$, $\exists N_0(M,K,s_0,T,b)>0$ such that $\forall N>N_0$ the solution of the following optimization problem
\[
(a_0,k_0)=\underset{(a,k)}|{\arg\max} \F_T(h_k)(a,b)|
\]
satisfies $|a_0-nN_{k_0}|<\delta$ for some $n$ such that $\widehat{s_{k_0}}(n)\neq 0$.
\end{theorem}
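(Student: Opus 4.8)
The plan is to split each $h_k$ into its diagonal term and its cross terms, show that the short-time Fourier transform of the diagonal term is a sum of sharp peaks located exactly at the frequencies $nN_k$, and then show that the cross terms are negligible for \emph{every} $a$ by a non-stationary phase argument powered by the well-different hypothesis. Since $\phi_k\circ\phi_k^{-1}=\mathrm{id}$, the $j=k$ summand of $h_k$ collapses to the clean periodic profile $s_k(2\pi N_k t)=\sum_n\widehat{s_k}(n)e^{2\pi inN_k t}$, while for $j\neq k$ the summand is $\frac{\alpha_j\circ\phi_k^{-1}}{\alpha_k\circ\phi_k^{-1}}\,s_j(2\pi N_j\,\phi_j\circ\phi_k^{-1})$. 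Writing $\F_T(h_k)=D_k+C_k$ accordingly, and using $\widehat{w_T}(\xi)=T\widehat{w_1}(T\xi)$ together with the uniform convergence of the Fourier series, the diagonal part is
\[
D_k(a,b)=\sum_n\widehat{s_k}(n)\,e^{2\pi i(nN_k-a)b}\,T\,\widehat{w_1}\!\big(T(a-nN_k)\big).
\]

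First I would analyze $D_k$. Because $|\widehat{w_1}|$ is sharply peaked at the origin and decays faster than any polynomial, and because consecutive peak locations $nN_k$ are spaced by $N_k\ge N$, at the globally best peak $a=n^\ast N_{k^\ast}$ (where $|\widehat{s_{k^\ast}}(n^\ast)|=s_0$) the off-diagonal contributions from $n\neq n^\ast$ are controlled by $\widehat{w_1}$ evaluated at arguments of size $\gtrsim TN$, hence $|D_{k^\ast}(a,b)|\ge s_0 T\widehat{w_1}(0)-o(1)$. Conversely, for any $a$ with $|a-nN_k|\ge\delta$ for all $n$ with $\widehat{s_k}(n)\neq0$, the same rapid decay gives $|D_k(a,b)|\lesssim s_0\,T\,|\widehat{w_1}(T\delta)|$, and the key point is that $T|\widehat{w_1}(T\delta)|\to0$ as $T\to\infty$ for fixed $\delta$.

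The hard part is the uniform estimate of the cross terms $C_k(a,b)$. Each cross summand is an oscillatory integral $\widehat{s_j}(n)\int \frac{\alpha_j}{\alpha_k}(\phi_k^{-1}(t))\,e^{2\pi i\Psi(t)}w_T(t-b)\,dt$ with $\Psi(t)=nN_j\,\phi_j(\phi_k^{-1}(t))-at$, so that $\Psi'(t)=nN_j\,\partial_t(\phi_j\circ\phi_k^{-1})(t)-a$. Since the GIMT bounds force $1/M^2\le|\partial_t(\phi_j\circ\phi_k^{-1})|\le M^2$, the modes with $|n|N_j$ far from $a$ are non-stationary throughout the window and are killed by repeated integration by parts; the only surviving modes are the finitely many $n$ for which $a/(nN_j)\in(\tfrac{1}{2M^2},2M^2)$. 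For those I would split $[b-T,b+T]$ into the bad set $\{t:|\partial_t(\phi_j\circ\phi_k^{-1})-a/(nN_j)|\le N^{-(1-\eta_0)}\}$ and its complement. On the bad set the second well-different condition of Definition \ref{def:WD} bounds the Lebesgue measure by $O(N^{-\eta_1})$, giving a contribution $\lesssim s_0M^2N^{-\eta_1}$; on the complement $|\Psi'|\gtrsim nN_j\,N^{-(1-\eta_0)}\ge N^{\eta_0}$, and the first well-different condition guarantees this complement is a union of boundedly many intervals (boundedly many monotone pieces of $\partial_t(\phi_j\circ\phi_k^{-1})$), so integrating by parts $r$ times yields $\lesssim N^{-\eta_0 r}$. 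Summing over $n$ via $\sum_n|\widehat{s_j}(n)|\le M$ and over the indices $j\neq k$ gives $|C_k(a,b)|\lesssim C(M,K,T,b)\,N^{-\min(\eta_1,\eta_0 r)}$ uniformly in $a$ and $k$. Carrying this bound out uniformly in $a$, and merging the treatment of the near-stationary set with the infinite Fourier tail, is the main obstacle.

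Finally I would combine the three estimates. Set $P=s_0T\widehat{w_1}(0)$. Given $\delta$, choose $T_0$ so large that for $T>T_0$ the off-peak diagonal bound obeys $\sup_{|a-nN_k|\ge\delta}|D_k(a,b)|<\tfrac14P$, which is possible since $T|\widehat{w_1}(T\delta)|\to0$. With $T$ now fixed, the cross-term estimate tends to $0$ as $N\to\infty$, so choose $N_0(M,K,s_0,T,b)$ with $\sup_{a,k}|C_k(a,b)|<\tfrac18P$ for $N>N_0$ (also absorbing the diagonal tail at the peak). Then at $a=n^\ast N_{k^\ast}$ one has $|\F_T(h_{k^\ast})(a,b)|>\tfrac34P$, while for every $(a,k)$ with $a$ at distance at least $\delta$ from all peaks $|\F_T(h_k)(a,b)|<\tfrac38P$. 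Hence the maximizer $(a_0,k_0)$ cannot lie in the off-peak region, forcing $|a_0-nN_{k_0}|<\delta$ for some $n$ with $\widehat{s_{k_0}}(n)\neq0$, which is the claim.
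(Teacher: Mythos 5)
Your proposal follows essentially the same route as the paper's proof: the same split of $\F_T(h_k)$ into the diagonal peak sum $I_1$ and the cross terms $I_2$, the same non-stationary phase bound for $a/(nN_j)$ outside $(\tfrac{1}{2M^2},2M^2)$, the same decomposition of the window into the small-measure near-stationary set (condition 2 of well-difference) and its complement of boundedly many intervals (condition 1), and the same choose-$T$-then-$N$ comparison of the global maximizer against the top peak of order $Ts_0$. The step you flag as the main obstacle is handled in the paper exactly as you sketch it, summing the per-$n$ bounds $O(|\widehat{s_j}(n)|(nN_j)^{-\eta_1})+O(|\widehat{s_j}(n)|(nN_j)^{-\eta_0})$ over the full Fourier tail via $\sum_n|\widehat{s_j}(n)|\le M$, so your argument is correct and matches the paper's.
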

In what follows, when we write $O(\cdot)$, $\lesssim$, or $\gtrsim$, the implicit constants may depend on $M$, $K$, $T$ and $b$.
\begin{proof}
Notice that
\[
h_k(t)=\frac{f\circ \phi_k^{-1}(t)}{\alpha_k\circ \phi_k^{-1}(t)}=\sum_{n=-\infty}^{\infty}\widehat{s_k}(n)e^{2\pi i nN_kt}+ \sum_{j\neq k} \sum_{n=-\infty}^{\infty} \widehat{s_j}(n) \frac{\alpha_j\circ \phi_k^{-1}(t)}{\alpha_k\circ \phi_k^{-1}(t)} e^{2\pi i n N_j \phi_j\circ \phi_k^{-1}(t)},
\]
then
\begin{eqnarray}
\F_T(h_k)(a,b)&=&\sum_{n=-\infty}^{\infty}\widehat{s_k}(n)\int_{\R}w_T(t-b)e^{2\pi i (nN_k-a)t}dt\nonumber\\
& &+ \sum_{j\neq k} \sum_{n=-\infty}^{\infty}\widehat{s_j}(n)  \int_{\R}\frac{\alpha_j\circ \phi_k^{-1}(t)}{\alpha_k\circ \phi_k^{-1}(t)} w_T(t-b)e^{2\pi i (n N_j \phi_j\circ \phi_k^{-1}(t)-at)}dt\nonumber
\end{eqnarray}
by the uniform convergence of the Fourier series of $s_k(t)$. 
The first part of $\F_T(h_k)(a,b)$ is
\begin{eqnarray}
I_1(a,k)&=&\sum_{n=-\infty}^{\infty}\widehat{s_k}(n)\int_{\R}w_T(t-b)e^{2\pi i (nN_k-a)t}dt\nonumber\\
&=&\sum_{n=-\infty}^{\infty}T\widehat{s_k}(n)e^{2\pi ib(nN_k-a)}\int_{\R}w_1(x)e^{2\pi i T(nN_k-a)x}dx\nonumber\\
&=&\sum_{n=-\infty}^{\infty}T\widehat{s_k}(n)e^{2\pi ib(nN_k-a)}\widehat{w_1}\left(T(a-nN_k)\right).\nonumber
\end{eqnarray}
Hence, $\exists T_0(M,K,s_0,\delta,b)$ such that, if $T>T_0$, then $\left|I_1(a,k)\right|$ has well-separated sheer energy peaks at $a=nN_k$ of order $T\left|\widehat{s_k(n)}\right|$ and  $\left|I_1(a,k)\right|<\frac{Ts_0}{3}$ if $|a-nN_k|\geq \delta$ for all $n$. The estimate of the second part 
\[
I_2(a,k)=\sum_{j\neq k} \sum_{n=-\infty}^{\infty}\widehat{s_j}(n)  \int_{\R}\frac{\alpha_j\circ \phi_k^{-1}(t)}{\alpha_k\circ \phi_k^{-1}(t)} w_T(t-b)e^{2\pi i (n N_j \phi_j\circ \phi_k^{-1}(t)-at)}dt
\]
relies on the estimate of each term 
\[
I_{jn}=\widehat{s_j}(n)  \int_{\R}\frac{\alpha_j\circ \phi_k^{-1}(t)}{\alpha_k\circ \phi_k^{-1}(t)} w_T(t-b)e^{2\pi i (n N_j \phi_j\circ \phi_k^{-1}(t)-at)}dt.
\]
Notice that $\frac{\alpha_j\circ \phi_k^{-1}(t)}{\alpha_k\circ \phi_k^{-1}(t)} w_T(t-b)$ and $2\pi(n N_j \phi_j\circ \phi_k^{-1}(t)-at)$ are real smooth functions and $w_T(t-b)$ has a compact support in $(b-T,b+T)$. If $\partial_t(n N_j \phi_j\circ \phi_k^{-1}(t)-at)\neq 0$ in $(b-T,b+T)$, a similar argument of the integration by parts in Lemma \ref{lemma1} shows that 
\[
|I_{jn}|\lesssim |\widehat{s_j}(n)|\frac{1}{\left|n N_j\partial_t( \phi_j\circ \phi_k^{-1})(t)-a\right|}.
\]
Therefore, the order of $|I_{jn}|$ is determined by points $t$ such that $\left|n N_j\partial_t( \phi_j\circ \phi_k^{-1})(t)-a\right|$ is vanishing or relatively small. 

If $a\notin (\frac{nN_j}{2M^2},2nN_jM^2)$, then by the fact that $\partial_t( \phi_j\circ \phi_k^{-1})(t)\in [\frac{1}{M^2},M^2]$, we have $\left|n N_j\partial_t( \phi_j\circ \phi_k^{-1})(t)-a\right|\gtrsim nN_j$, which implies 
\begin{eqnarray}
\label{eqn:I1}
|I_{jn}|\lesssim \frac{|\widehat{s_j}(n)|}{nN_j}\lesssim \frac{1}{N}.
\end{eqnarray} 

If $a\in (\frac{nN_j}{2M^2},2nN_jM^2)$, then $\frac{a}{nN_j}\in(\frac{1}{2M^2},2M^2)$. Let 
\[
A=\left\{   t:\left|\partial_t\left(\phi_j\circ \phi_k^{-1}(t)\right)-\frac{a}{nN_j}\right|\leq \frac{1}{(nN_j)^{1-\eta_0}}\right\}\cap \left\{ t:b-T\leq t \leq b+T \right\}.
\]
Because the phase functions are well-different of type $(M,K)$ at $b$, for fixed $T$ there exists $\eta_0>0$, $\eta_1>0$ and $N_1(M,K,T,b)$ such that for $\frac{a}{nN_j}\in (\frac{1}{2M^2},2M^2)$ and $nN_j>N_1(M,K,T,b)$, we have
$\lambda^*(A)\lesssim O(\frac{1}{(nN_j)^{\eta_1}})$. This gives 
\[
\left|\widehat{s_j}(n)  \int_{A}\frac{\alpha_j\circ \phi_k^{-1}(t)}{\alpha_k\circ \phi_k^{-1}(t)} w_T(t-b)e^{2\pi i (n N_j \phi_j\circ \phi_k^{-1}(t)-at)}dt\right|\lesssim O(\frac{\left|\widehat{s_j}(n)\right|}{(nN_j)^{\eta_1}}).
\]
By the definition of well-difference of type $(M,K)$, $\left(\R\setminus A\right)\cap (b-T,b+T)$ is a union of at most $O(TM)$ intervals. Hence, similar to the method of stationary phase, we have
\[
\left|\widehat{s_j}(n)  \int_{\R\setminus A}\frac{\alpha_j\circ \phi_k^{-1}(t)}{\alpha_k\circ \phi_k^{-1}(t)} w_T(t-b)e^{2\pi i (n N_j \phi_j\circ \phi_k^{-1}(t)-at)}dt\right|\lesssim O(\frac{\left|\widehat{s_j}(n)\right|}{(nN_j)^{\eta_0}}).
\]
In sum,\begin{eqnarray*}
\left|I_{jn}\right|&\leq&\left|\widehat{s_j}(n)  \int_{\R\setminus A}\frac{\alpha_j\circ \phi_k^{-1}(t)}{\alpha_k\circ \phi_k^{-1}(t)} w_T(t-b)e^{2\pi i (n N_j \phi_j\circ \phi_k^{-1}(t)-at)}dt\right|\\
& &+\left|\widehat{s_j}(n)  \int_{A}\frac{\alpha_j\circ \phi_k^{-1}(t)}{\alpha_k\circ \phi_k^{-1}(t)} w_T(t-b)e^{2\pi i (n N_j \phi_j\circ \phi_k^{-1}(t)-at)}dt\right|\\
&\lesssim& O(\frac{\left|\widehat{s_j}(n)\right|}{(nN_j)^{\eta_1}})+O(\frac{\left|\widehat{s_j}(n)\right|}{(nN_j)^{\eta_0}}).
\end{eqnarray*}
Recall that $N_k\geq N$ and $\sum \limits_{n=-\infty}^{\infty}|\widehat{s_k}(n)|\leq M$ for $1\leq k\leq K$. So, if $N>N_1(M,K,T,b)$
\begin{eqnarray}
\label{eqn:I2}
\left|I_2(a,k)\right|\lesssim \sum_{j\neq k} \sum_{n=-\infty}^{\infty}\left(O(\frac{\left|\widehat{s_j}(n)\right|}{(nN_j)^{\eta_1}})+O(\frac{\left|\widehat{s_j}(n)\right|}{(nN_j)^{\eta_0}})\right)\lesssim O(\frac{(K-1)M}{N^{\eta}})\lesssim O(\frac{1}{N^\eta}),
\end{eqnarray}
where $\eta=\min \{\eta_0,\eta_1\}$. 

By \eqref{eqn:I1} and \eqref{eqn:I2}, $\exists N_0=\max\left\{N_1(M,K,T,b),\left(\frac{3}{Ts_0}\right)^{1/\eta},\frac{3}{Ts_0}\right\}$ such that $\forall N>N_0$, we have $\left|I_2(a,k)\right|<\frac{Ts_0}{3}$.

Let $\Xi_k$ be the index set $\{n:\widehat{s_k}(n)\neq 0\}$ and $(\tilde{n},\tilde{k})=\underset{(n,k)}{\arg\max}  \left|\widehat{s_k}(n)\right|$. Now suppose $N>N_0$. Let $\left|\F_T(h_k)(a,b)\right|$ take the maximum value at the pair $(a_0,k_0)$. If there is no $n\in\Xi_{k_0}$ such that $\left| a_0-nN_{k_0}\right|<\delta$, then 
\[
\left|\F_T(h_{k_0})(a_0,b)\right|\leq \left|I_1(a_0,k_0)\right|+\left|I_2(a_0,k_0)\right|<\frac{2Ts_0}{3}.
\]
However, for the pair $(\tilde{n},\tilde{k})$, we have
\[
\left|\F_T(h_{\tilde{k}})(\tilde{n},b)\right|\geq \left|I_1(\tilde{n},\tilde{k})\right|-\left|I_2(\tilde{n},\tilde{k})\right|>Ts_0-\frac{Ts_0}{3}>\frac{2Ts_0}{3}.
\]
This conflicts with the fact that $\left|\F_T(h_k)(a,b)\right|$ takes the maximum value less than $\frac{2Ts_0}{3}$ at the pair $(a_0,k_0)$. Hence, there exists $n\in\Xi_{k_0}$ satisfying that $\left|a_0-nN_{k_0}\right|<\delta$. This completes the proof.
\end{proof}

In practice, the signal $f(t)$ is defined in a bounded interval, e.g., $[0,1]$ without loss of generality. Applying the Fourier transform on $f(t)$ in $[0,1]$ is equivalent to applying the short-time Fourier transform on $f(t)$ with a rectangle window function centered at $t=\frac{1}{2}$. In this sense, Theorem \ref{thm:main3} implies that the DSA method can accurately decompose $f(t)$ into GIMTs $\{\alpha_k(t)s_k(2\pi N_k\phi_k(t))\}_{k=1}^K$ and analyzes the spectra of general shape functions $\{\alpha_k(t)\}_{k=1}^K$ by extracting the Fourier expansion terms $\widehat{s_k}(n)\alpha_k(t)e^{2\pi inN_k\phi_k(t)}$ one by one from the one with highest energy.

\section{Numerical examples}
\label{sec:results}
In this section, some numerical examples of synthetic and real data are provided to demonstrate the proposed properties of 1D SSWPT and the efficiency of the GMDWP method and the DSA method in fruitful applications. In all of these examples, the 1D SSWPT is implemented using a fast algorithm similar to the one in \cite{Demanet2007,SSWPT} and the complexity is $O(L\log(L))$, where $L$ is the number of sample points of a given signal. The mother wave packet $w(t)$ is constructed using the same method in \cite{Demanet2007} with a support parameter $d=1$. The threshold parameter in the main theorems is $\epsilon=10^{-6}$ and the scaling parameter $s$ is equal to $2/3$. For the purpose of convenience, the synthetic data is defined in $[0,1]$ and the number of samples is  between $2^{13}$ and $2^{15}$.

\subsection{The comparison of multiscale resolutions}
Let us start by repeating the comparison of the resolutions of the SSWPT and the SSWT, since it is a fundamental issue in general mode decomposition problems. As we shall see in the following two examples, the SSWT would mix up high frequency terms and would provide misleading high frequency information. However, the SSWPT can relieve much of this trouble. In the following two examples, the mother wavelet and the mother wave packet have the same size of supports $d=1$ in the Fourier domain.

\textbf{Example $2$:} Let us consider
\[
f(t)=e^{200\pi i(x+5x^2)}+\sum_{n=1}^{20}e^{2\pi i nN(x+0.005*\sin(2\pi x))}
\]
where $N=100$. Figure \ref{fig:comp} left shows the real instantaneous frequencies of all the oscillatory terms in $f(t)$. The SSWPT of $f(t)$ shown in Figure \ref{fig:comp} middle agrees with Theorem \ref{thm:main} and \ref{thm:main2} that the essential support of the synchrosqueezed energy distribution $T_f(v,b)$ is concentrating around isolated instantaneous frequencies, even with crossover frequencies on the scene. Recall that the SSWPT is able to identify $O(N^{1/s-1})=O(10)$ components when $s=2/3$. The number of clearly identified components is $20$ according with the multiscale resolution analysis of the SSWPT.  However, the SSWT can only identify $O(1)$ components as shown in Subsection \ref{sub:RA}. This explains why the SSWT of $f(t)$ is misleading as shown in Figure \ref{fig:comp} right. The SSWT mixes up the high frequency terms and cannot reflect the true instantaneous information of signals. Nevertheless, the high frequency information is crucial to precise reconstructions of general shape functions.

\begin{figure}
  \begin{center}
    \begin{tabular}{ccc}
      \includegraphics[height=1.6in]{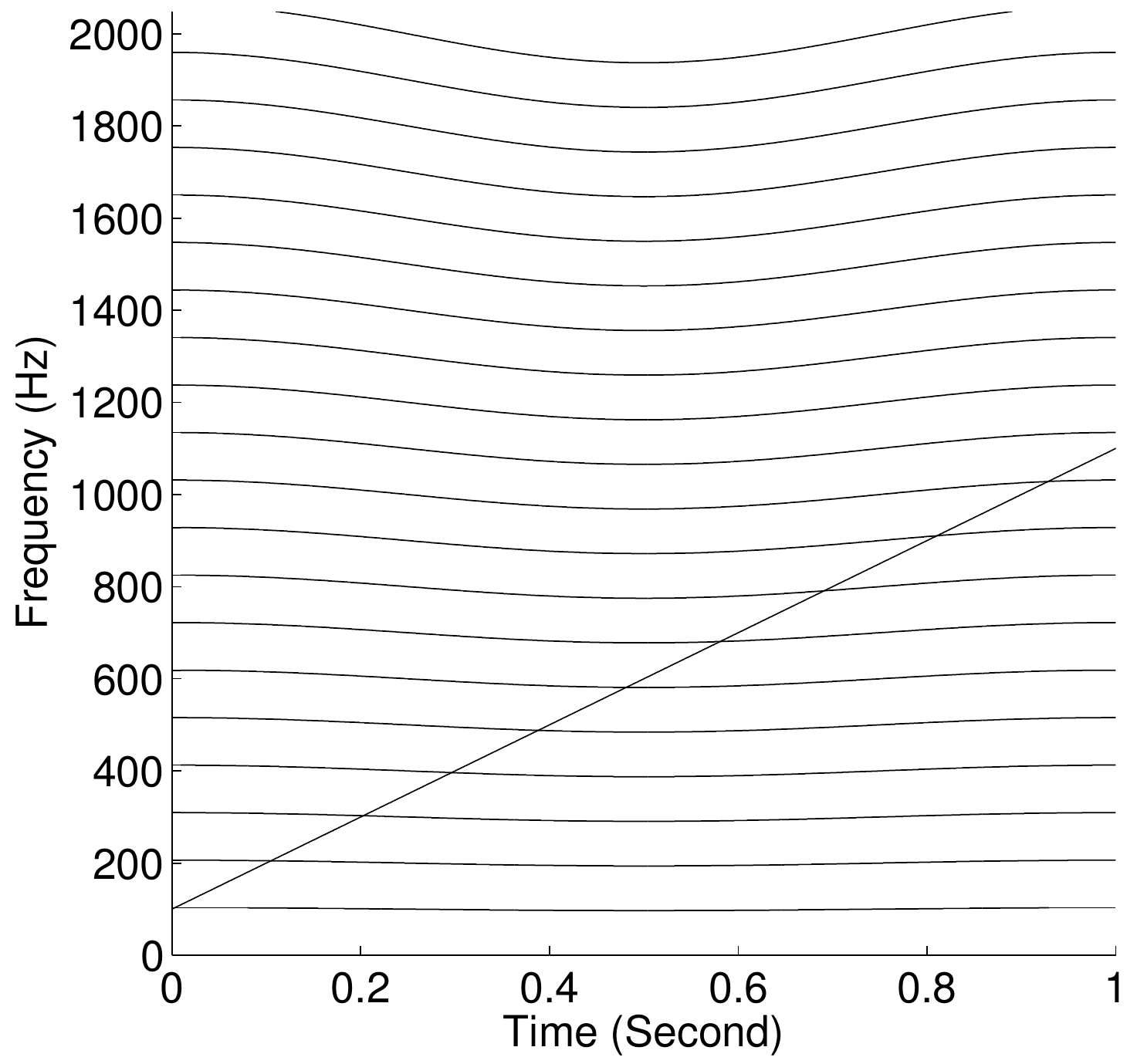}&  \includegraphics[height=1.6in]{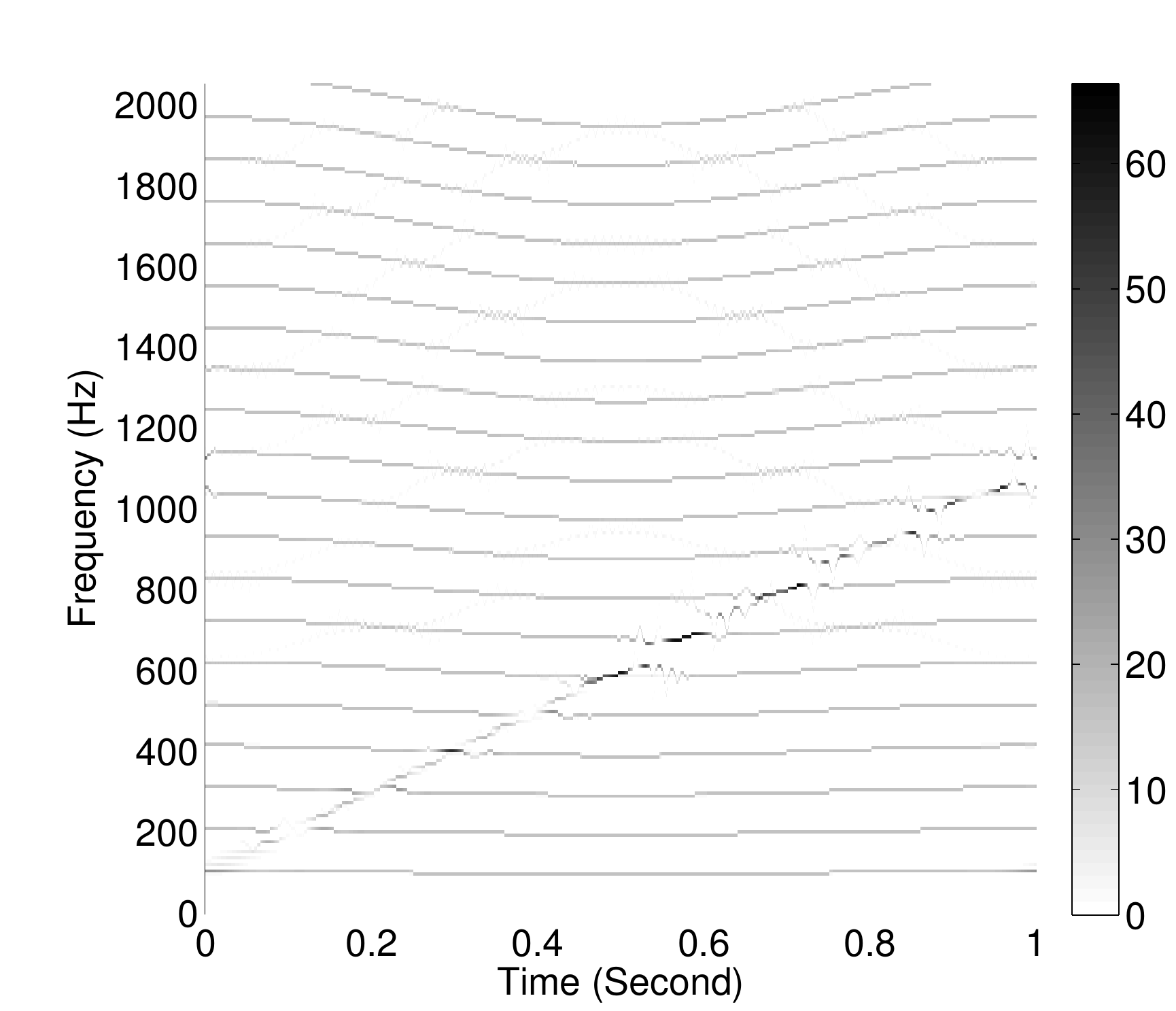} & \includegraphics[height=1.6in]{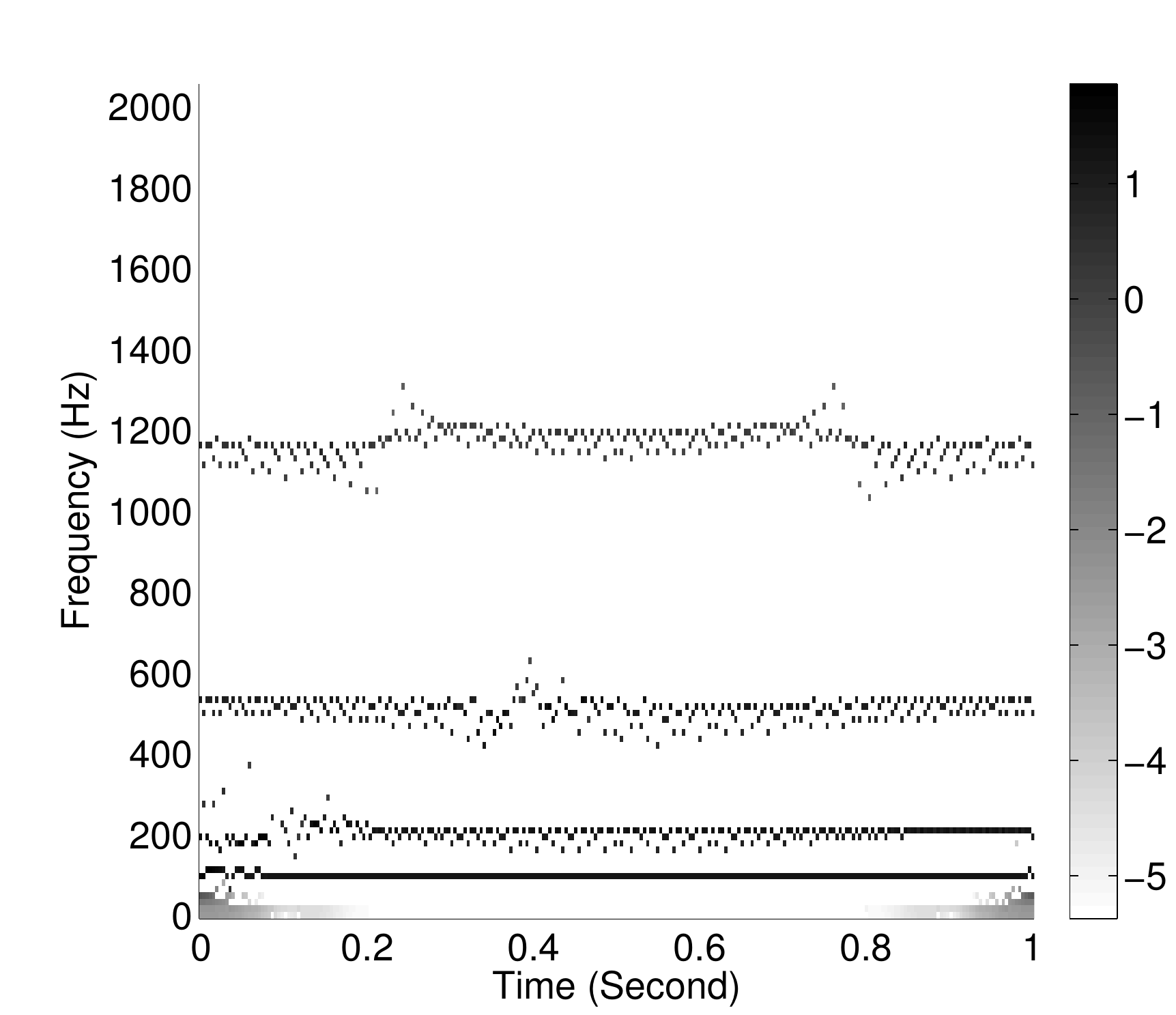}
    \end{tabular}
  \end{center}
  \caption{Left: Real instantaneous frequencies of Fourier expansion terms of $f(t)$ in Example $2$. Middle: The synchrosqueezed energy distribution of $T_f(v,b)$ provided by SSWPT.  Right: The logarithm of the synchrosqueezed energy distribution $\log_{10}(T_f(v,b))$ provided by SSWT.}
\label{fig:comp}
\end{figure}

\textbf{Example $3$:} In \cite{Daubechies2011}, the SSWT has been shown the ability to estimate the instantaneous frequencies of real ECG signals accurately.  Figure \ref{fig:multiscale} left and middle show the synchrosqueezed energy distribution of the SSWT of the real ECG signal in \cite{Daubechies2011}.  The component with the lowest frequency can reflect the instantaneous frequency of the ECG signal according with the conclusion of \cite{Daubechies2011}. However, the synchrosqueezed result in high frequency part is blurry and useless due to severe interferences between high frequency components. This limits the application of the SSWT to recover spike shape functions. Fortunately, as the Figure \ref{fig:multiscale} right shows, the SSWPT can identify $O(N^{1/s-1})=O(10)$ components, which are the Fourier expansion terms of the general mode in a spike shape. This encourages the application of the SSWPT for general mode decompositions.

\begin{figure}
  \begin{center}
    \begin{tabular}{ccc}
      \includegraphics[height=1.6in]{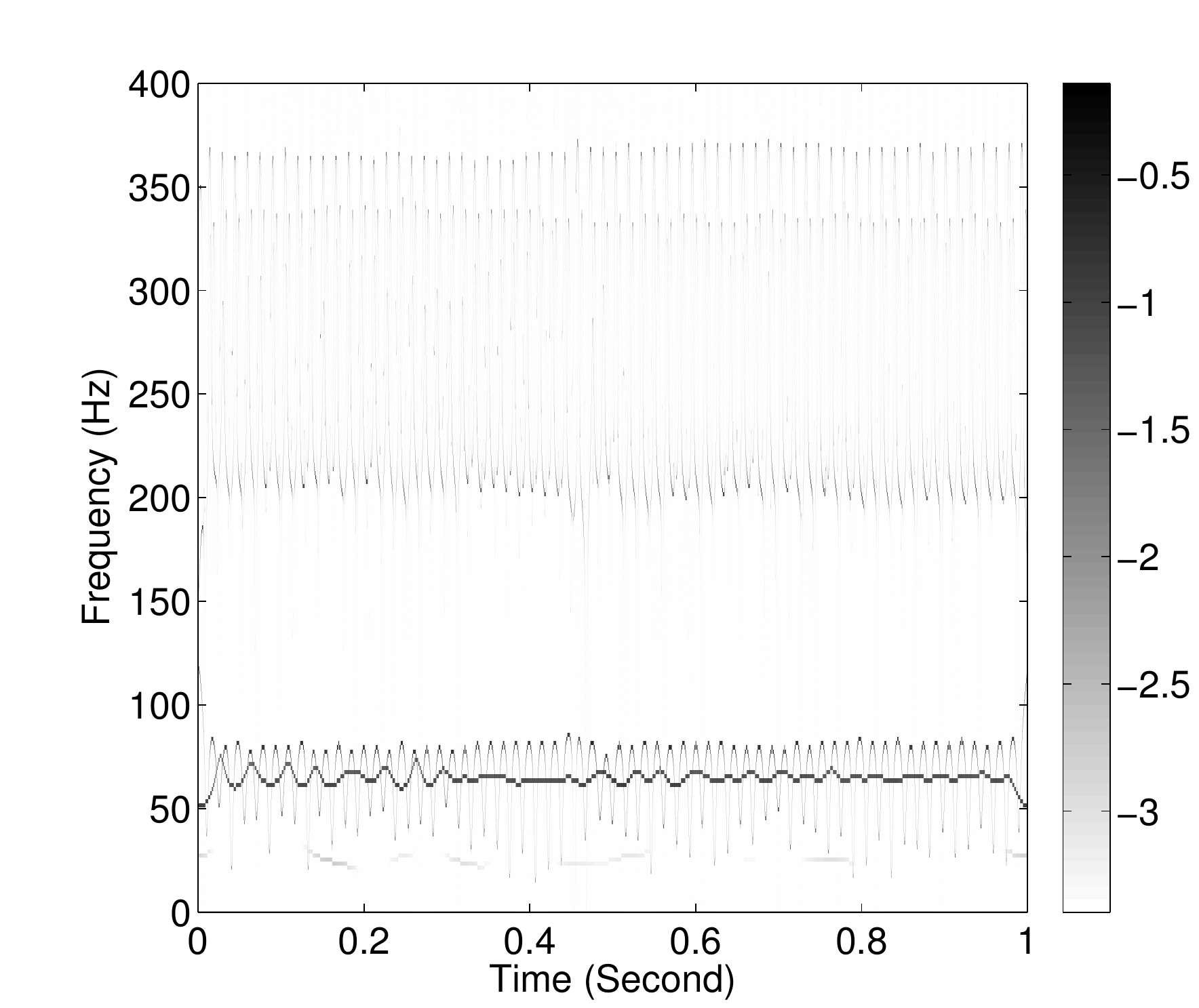} &\includegraphics[height=1.6in]{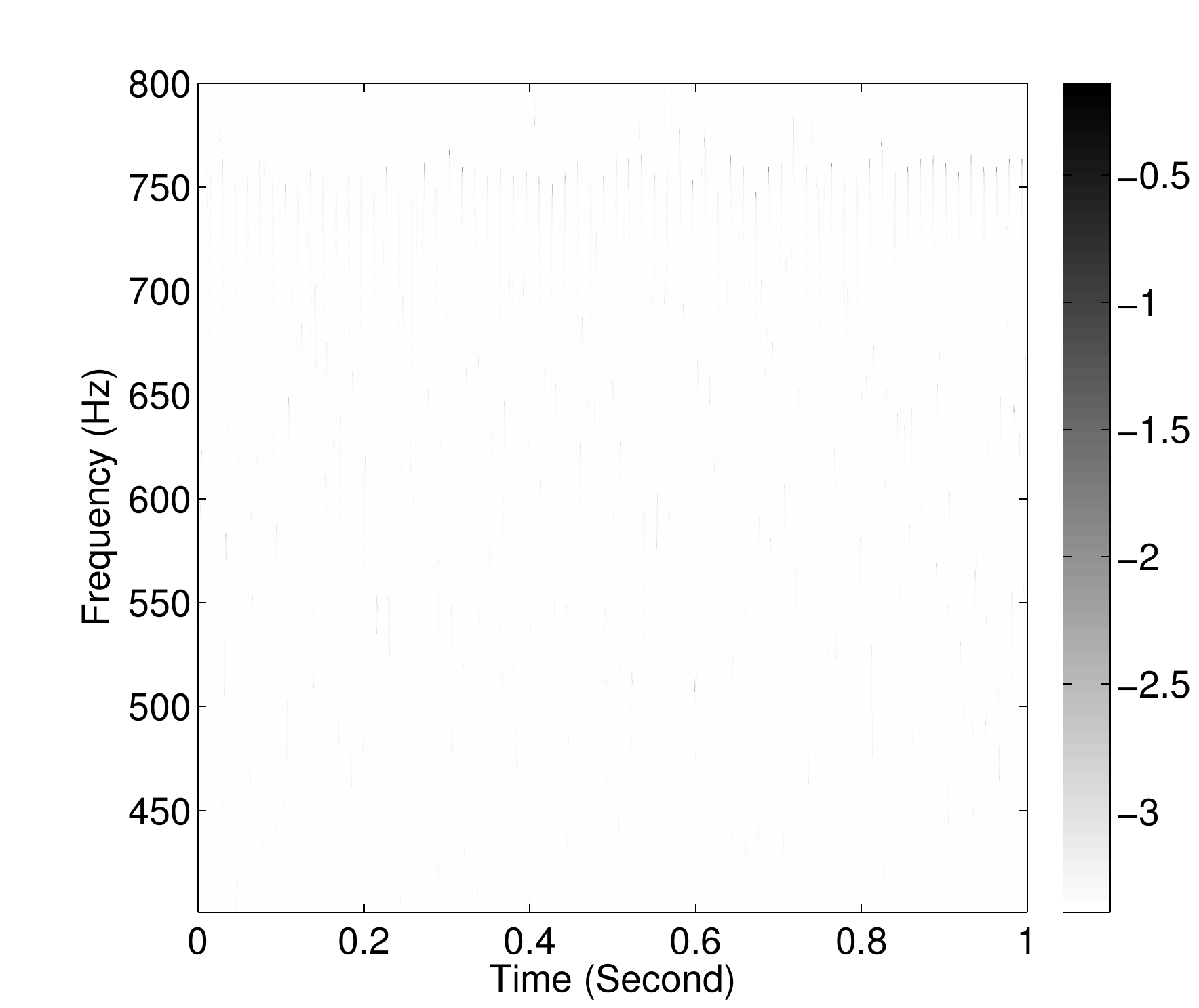} & \includegraphics[height=1.6in]{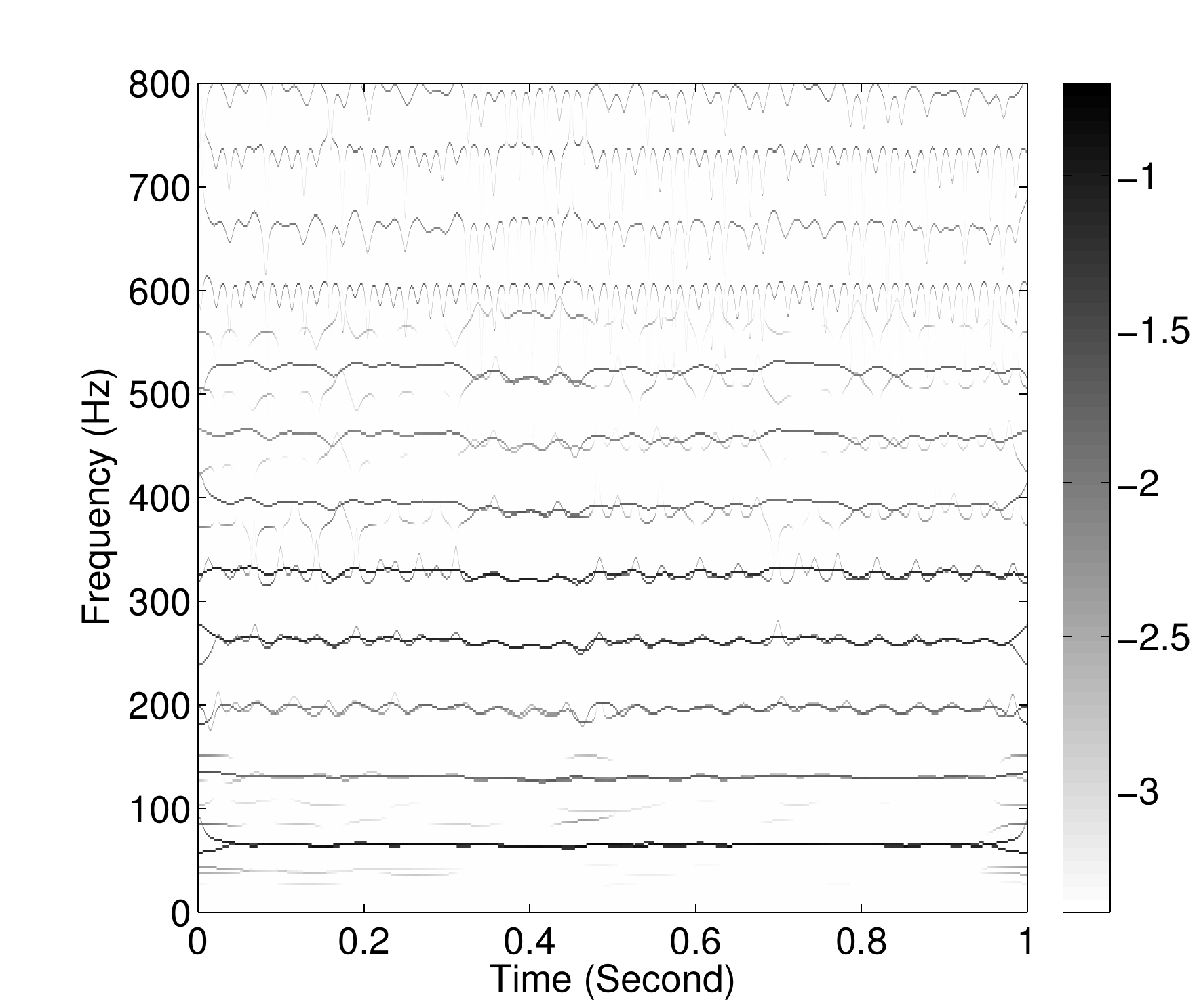} 
    \end{tabular}
  \end{center}
  \caption{Left: The logarithm of the synchrosqueezed energy distribution $\log_{10}(T_f(v,b))$ of the ECG signal in the low frequency part using the SSWT. Middle: $\log_{10}(T_f(v,b))$ of the same ECG signal in the high frequency part using the SSWT. Right: $\log_{10}(T_f(v,b))$ of the same ECG signal provided by the SSWPT. }
\label{fig:multiscale}
\end{figure}
\subsection{General mode decompositions and the robustness}
As we have seen in Example $1$, the GMDWP method and the DSA method can exactly recover general modes. In what follows, we would study the robustness against noise and present some more examples of general shape functions. The shapes of general modes are determined by all the Fourier expansion terms, including those weak energy terms which have been concealed by noise. We will show the recovered results in noisy cases and then present an example about denoising according to the feature of recovered modes. The noise used here is a Gaussian random noise $n(t)$ with zero mean and variance $\sigma^2$. To quantify the influence of the noise on each general mode, we introduce the following Signal-to-Noise Ratio ($\SNR$)
\[
\SNR [dB]=\min \left\{10\log_{10}\left(\frac{\|f_i\|_{L^2}}{\sigma^2}\right),1\leq i\leq K\right\},
\]
where $\{f_i\}_{i=1}^K$ are the general modes contained in the original signal $f(t)$. 

\textbf{Example $1$}: Let us revisit Example $1$ in Figure \ref{fig:s1s2freq} and add study its noisy case,
\begin{eqnarray*}
f(t)=\alpha_1(t)s_1(2\pi N_1\phi_1(t))+\alpha_2(t)s_2(2\pi N_2\phi_2(t))+n(t).
\end{eqnarray*}
Figure \ref{fig:EX1_ns_signal} shows three superpositions with different noise levels. As the reconstructed results show in Figure \ref{fig:EX1_ns_rec}, the instantaneous frequencies are accurately estimated, even if the signal is disturbed by severe noise. The essential feature of the general modes are recovered. When the noise is overwhelming the general modes, additional denoising procedure is application dependent, as we will show in the next example.

\begin{figure}[ht!]
  \begin{center}
    \begin{tabular}{c}
      \includegraphics[height=1.3in]{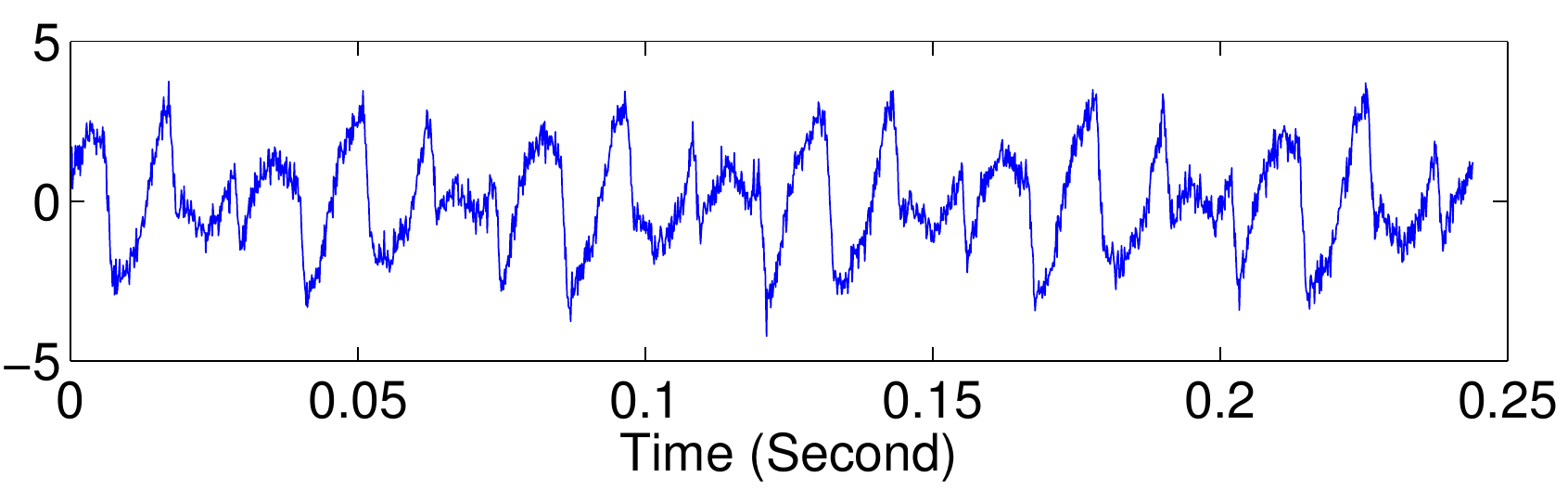}  \\
\includegraphics[height=1.3in]{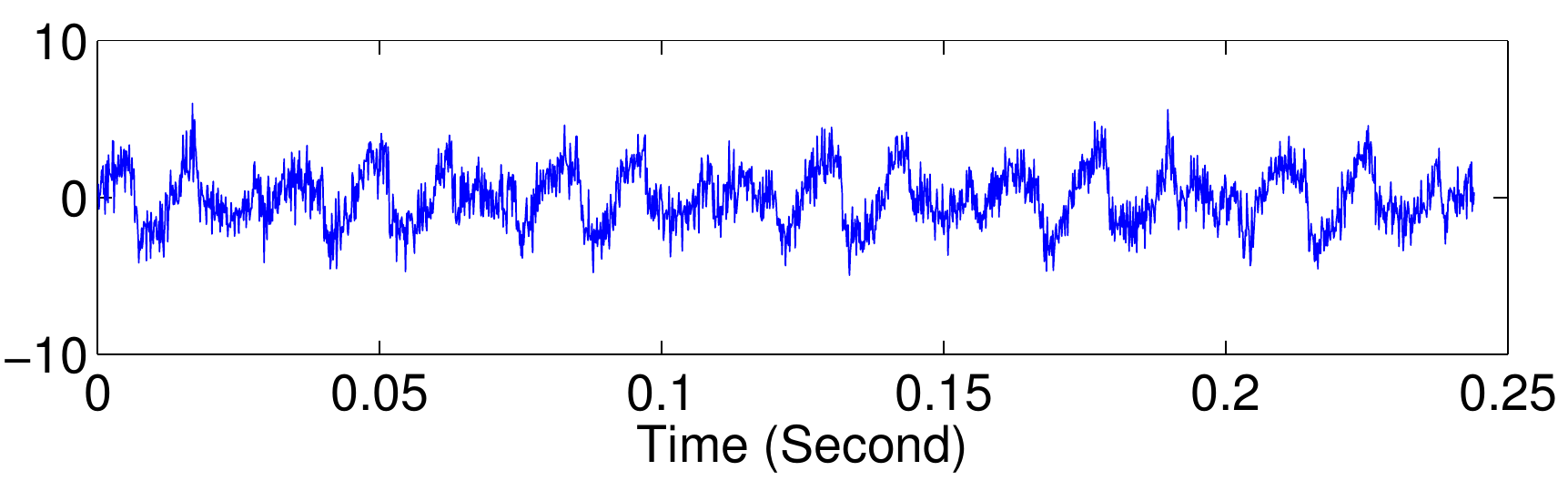}\\
\includegraphics[height=1.3in]{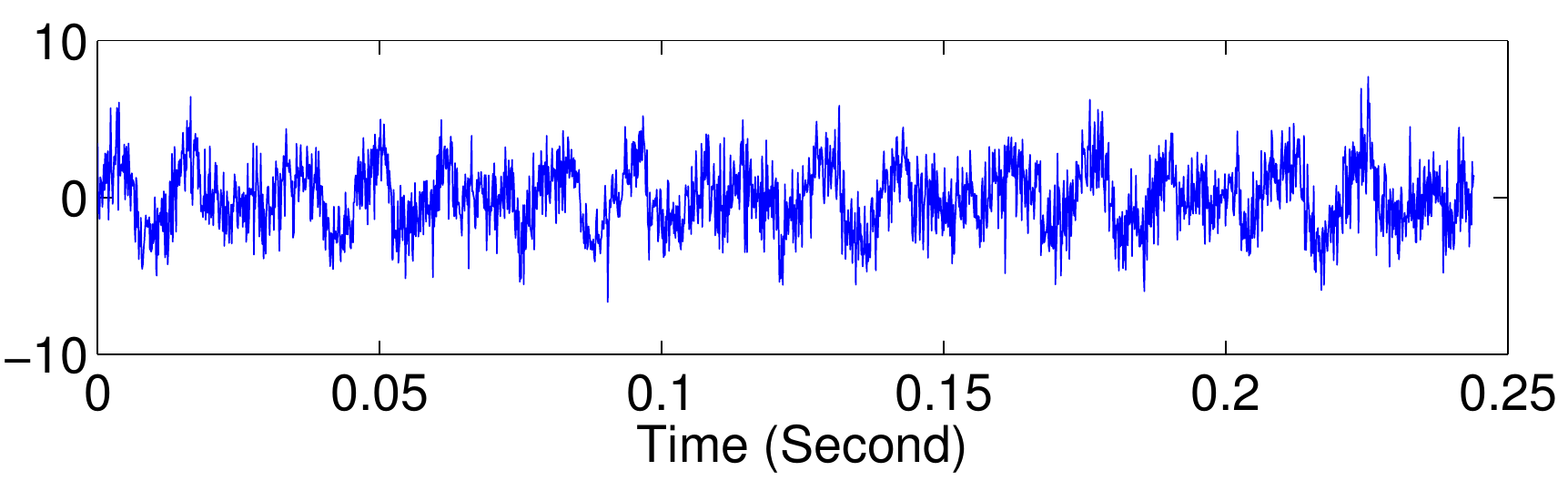}
    \end{tabular}
  \end{center}
  \caption{Noisy signals of Example $1$ and their $\SNR$s are $6$, $0$, and $-3$, respectively. }
\label{fig:EX1_ns_signal}
\end{figure}

\begin{figure}
  \begin{center}
    \begin{tabular}{cccc}
      \includegraphics[height=1.3in]{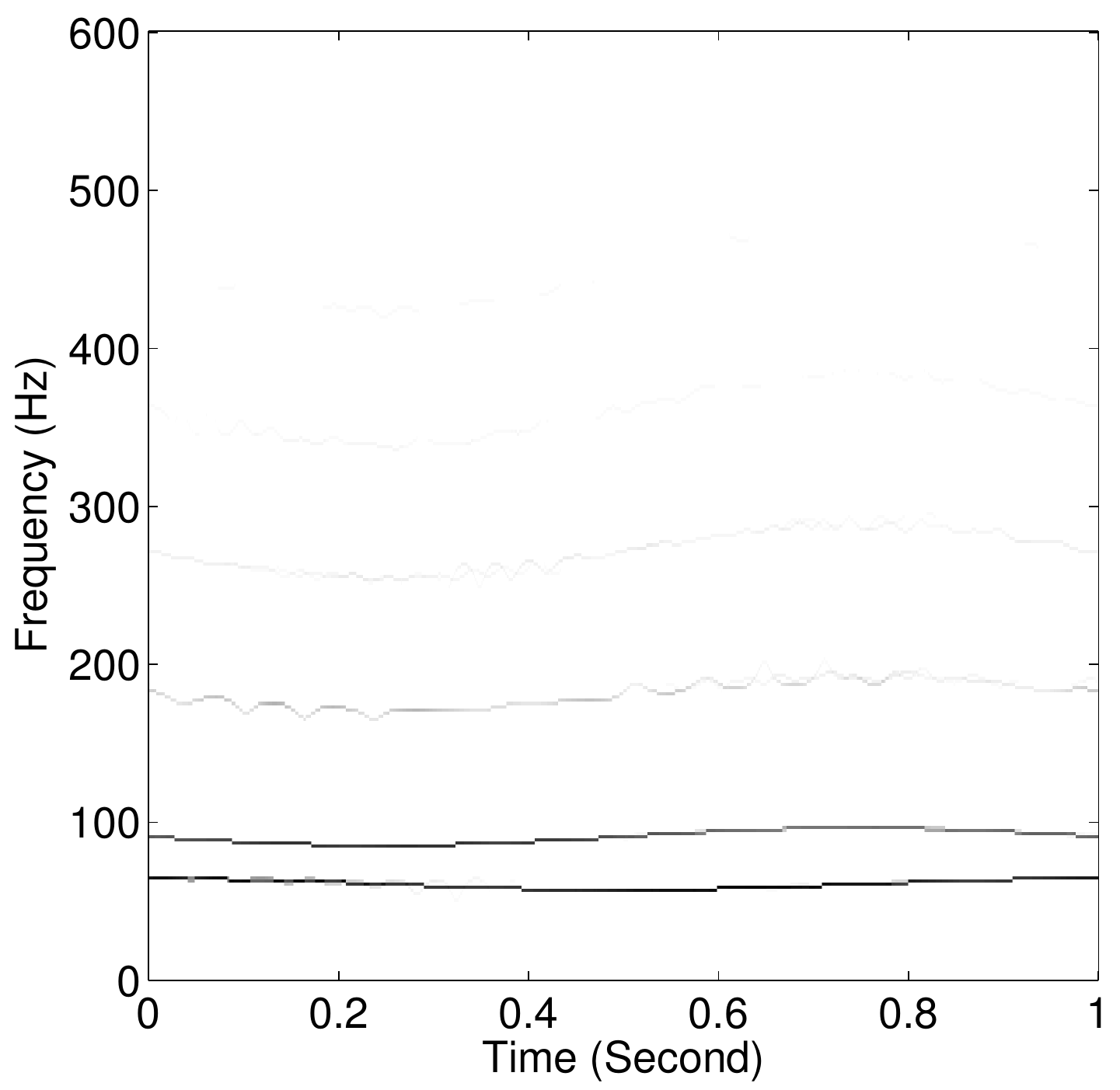}&  \includegraphics[height=1.3in]{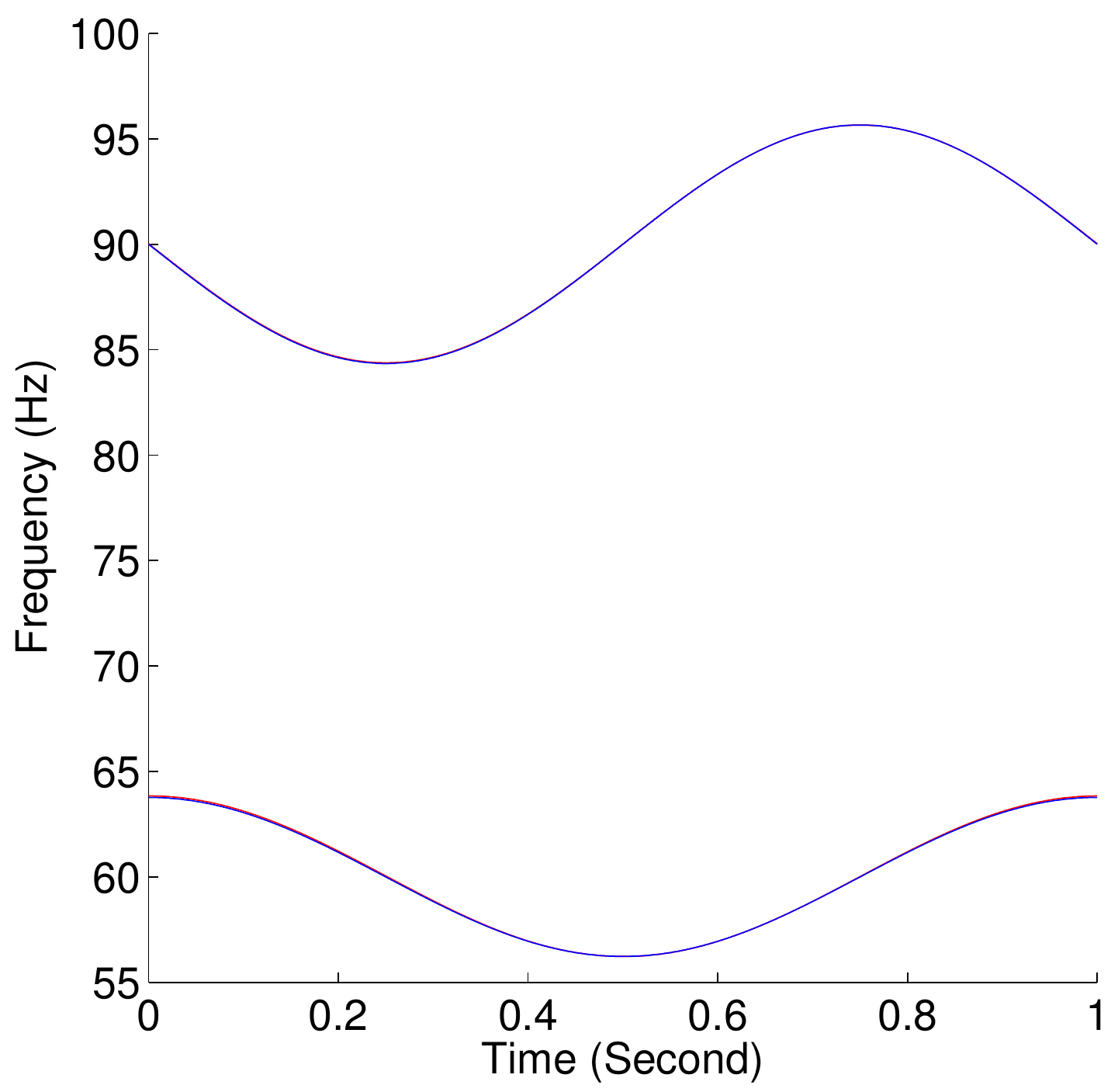} 
     & \includegraphics[height=1.3in]{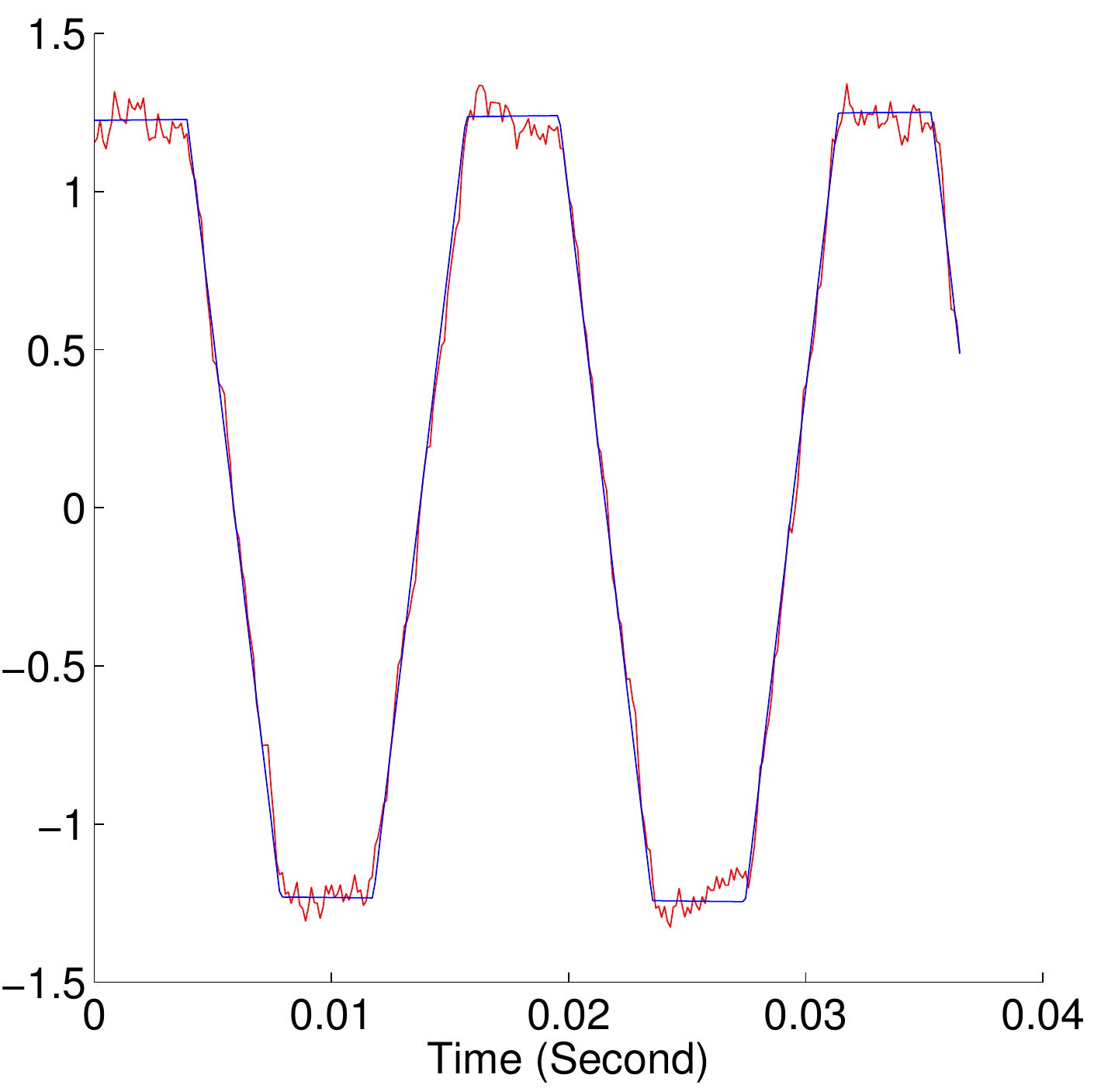} & \includegraphics[height=1.3in]{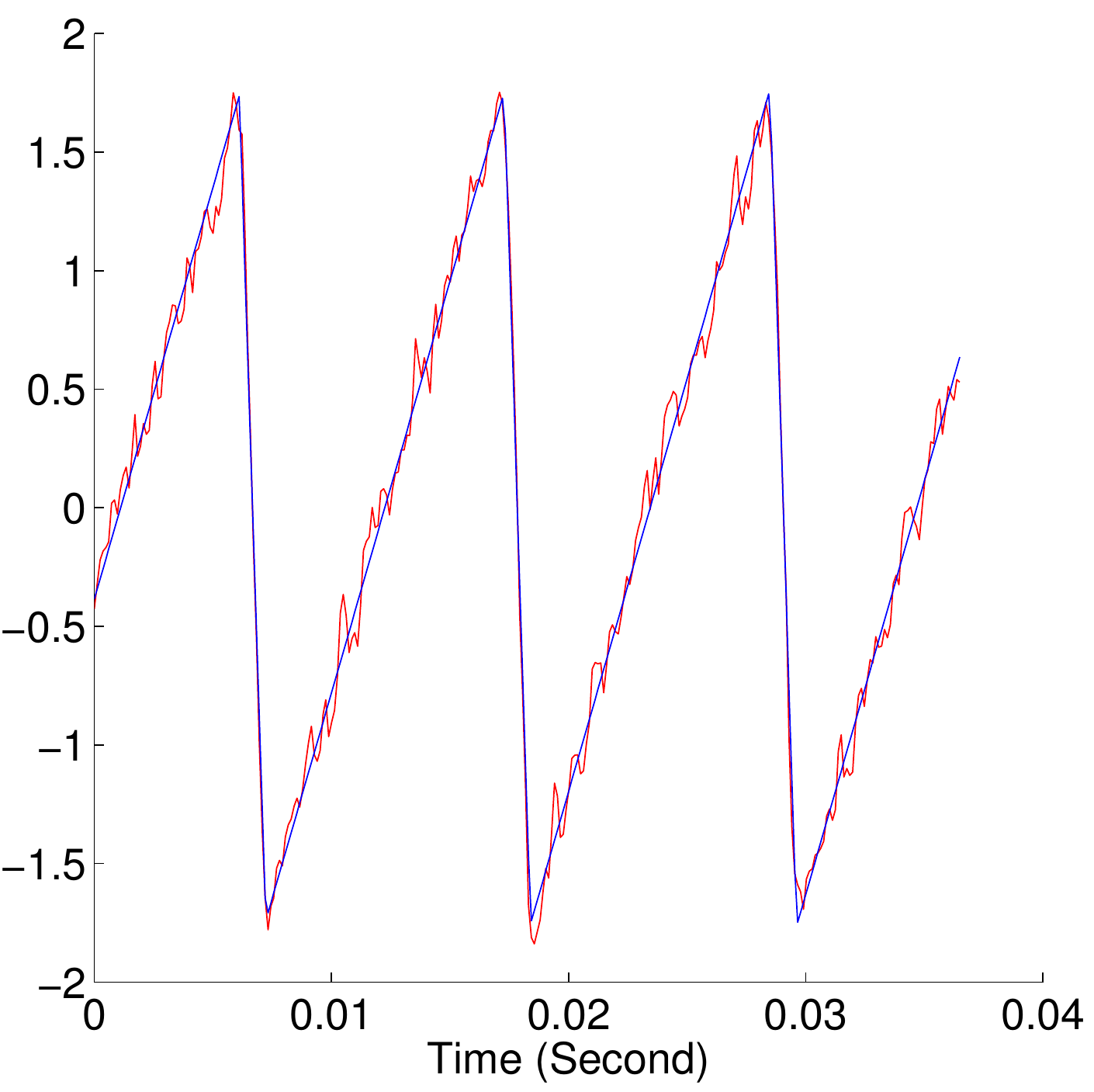}\\
      \includegraphics[height=1.3in]{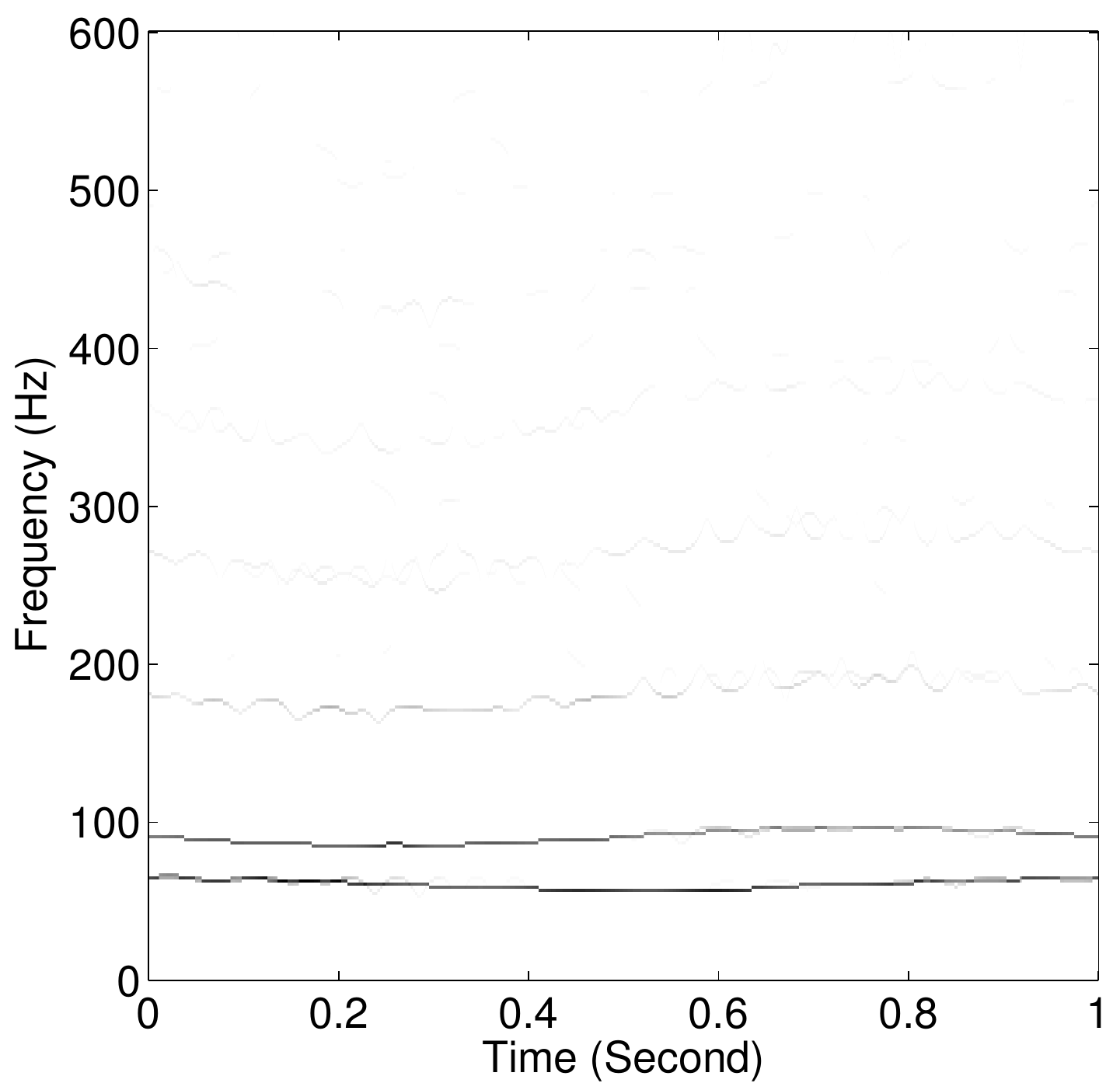}&  \includegraphics[height=1.3in]{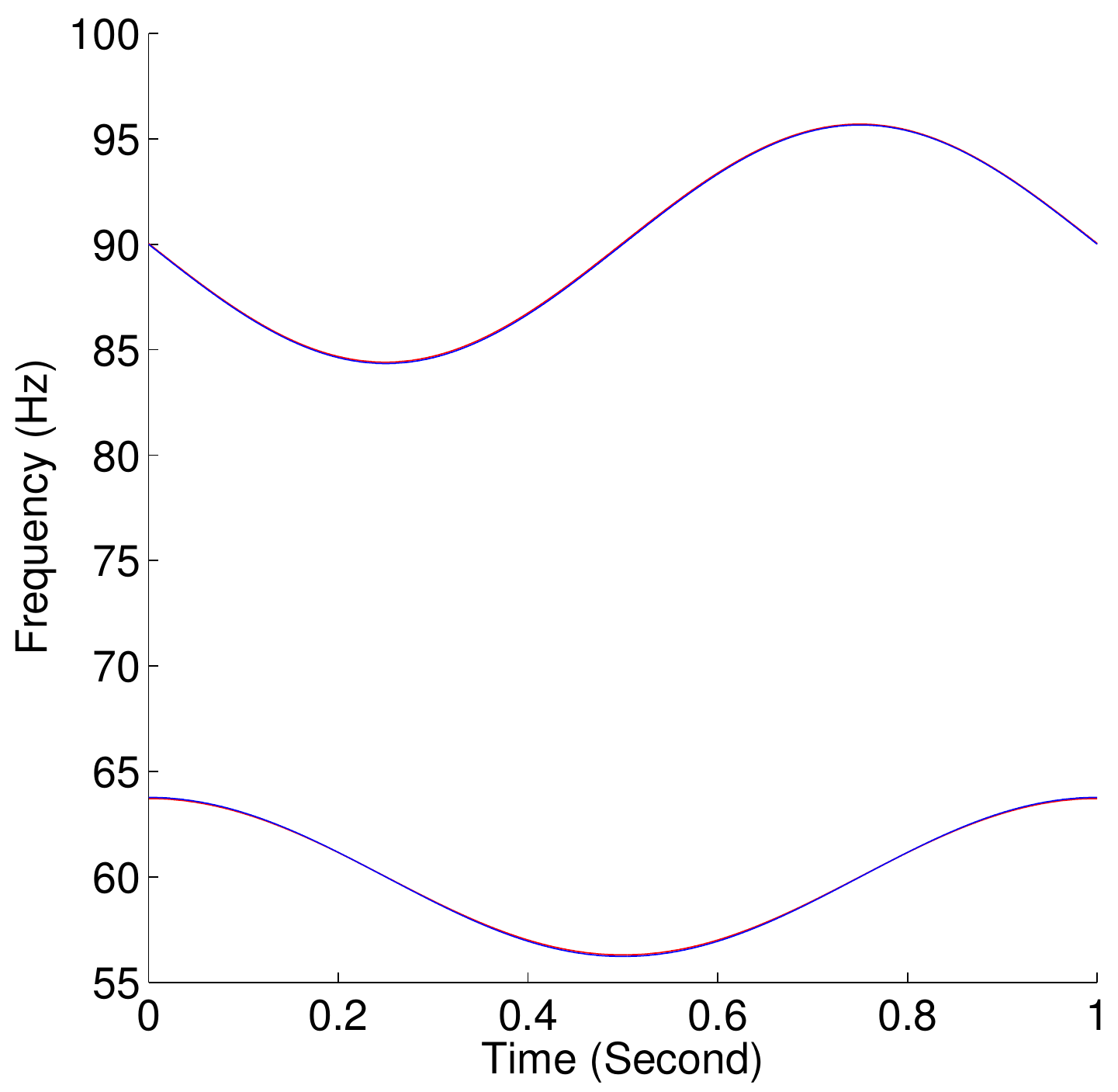} 
     & \includegraphics[height=1.3in]{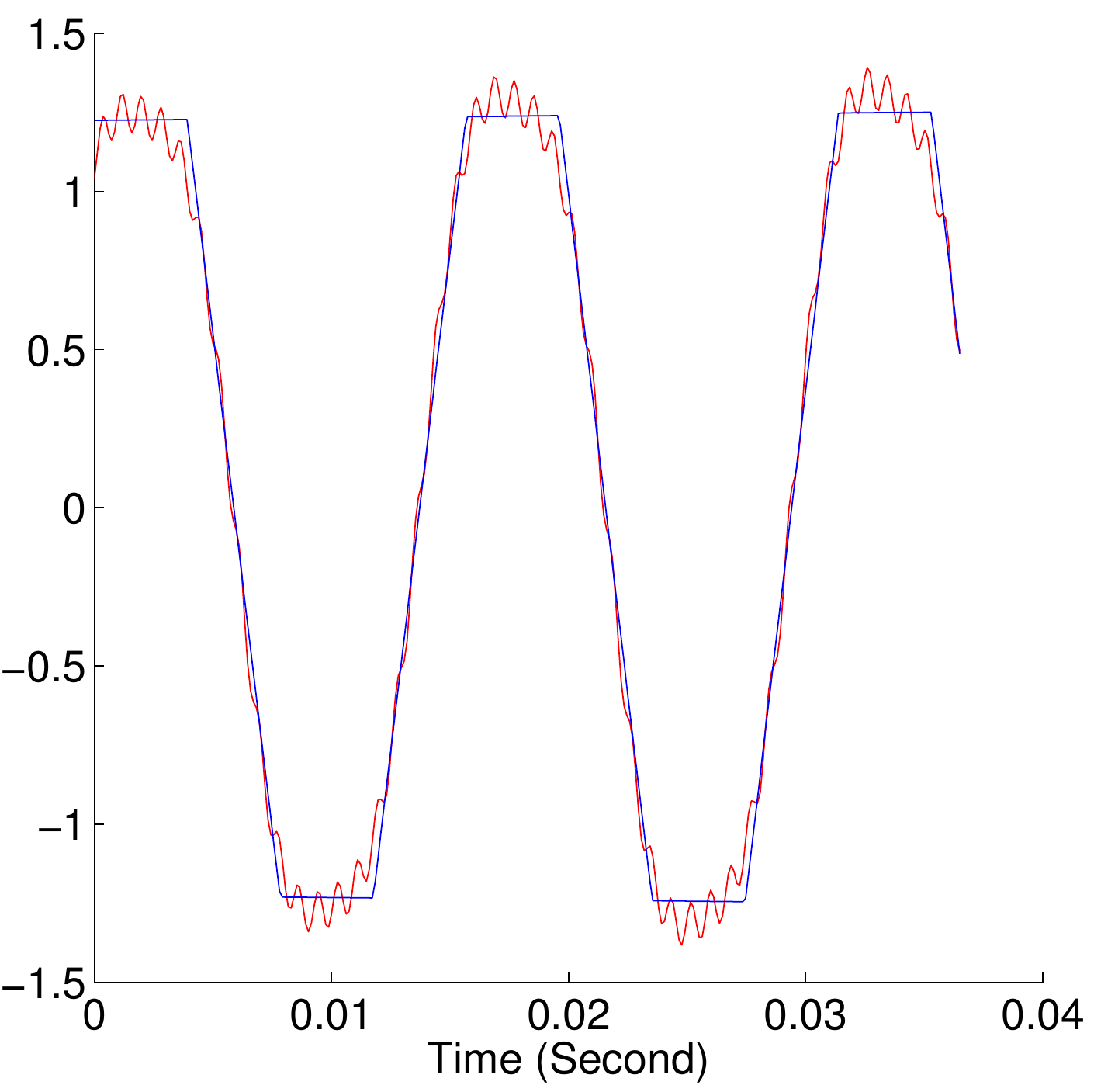} & \includegraphics[height=1.3in]{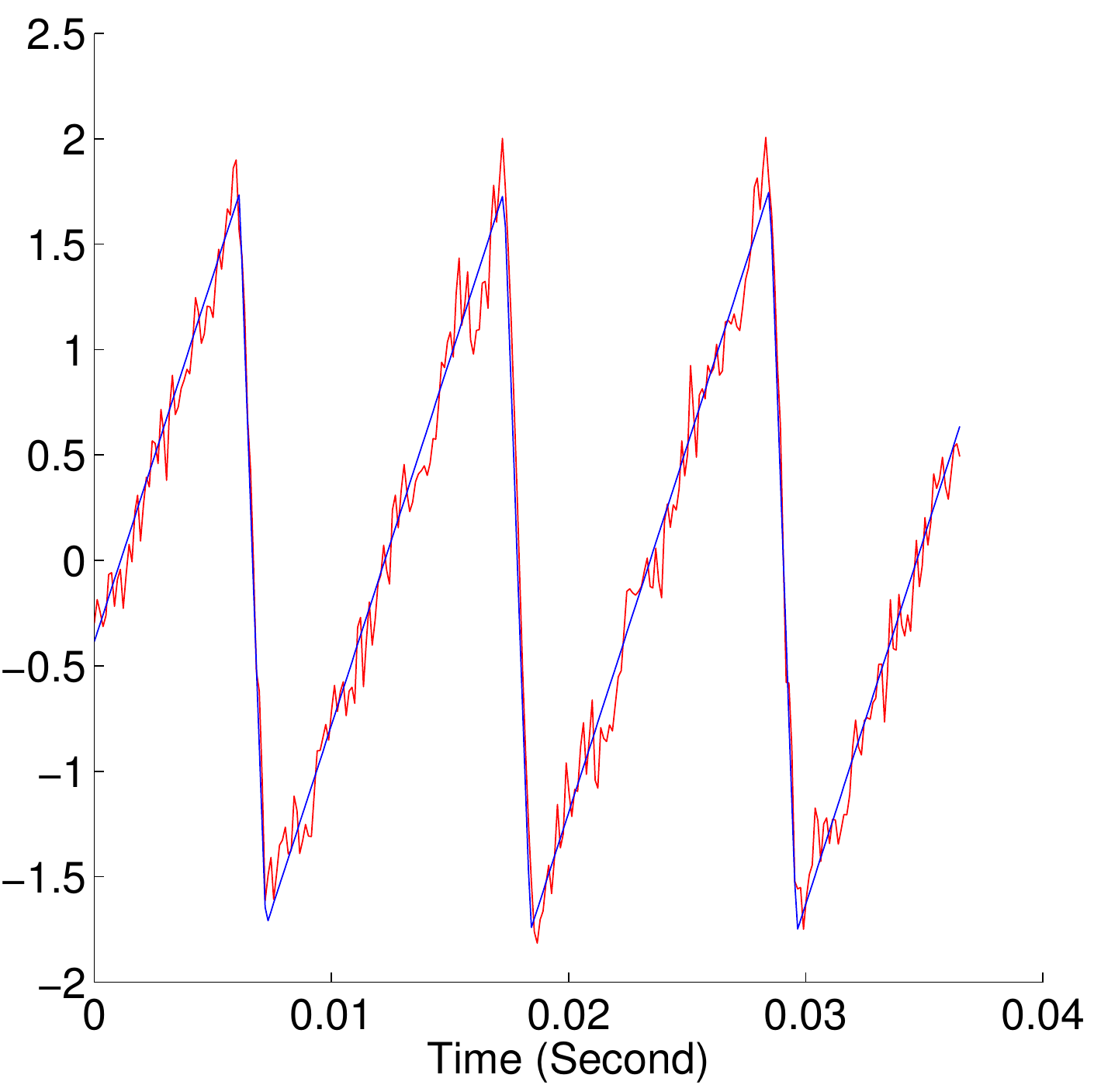}\\
 \includegraphics[height=1.3in]{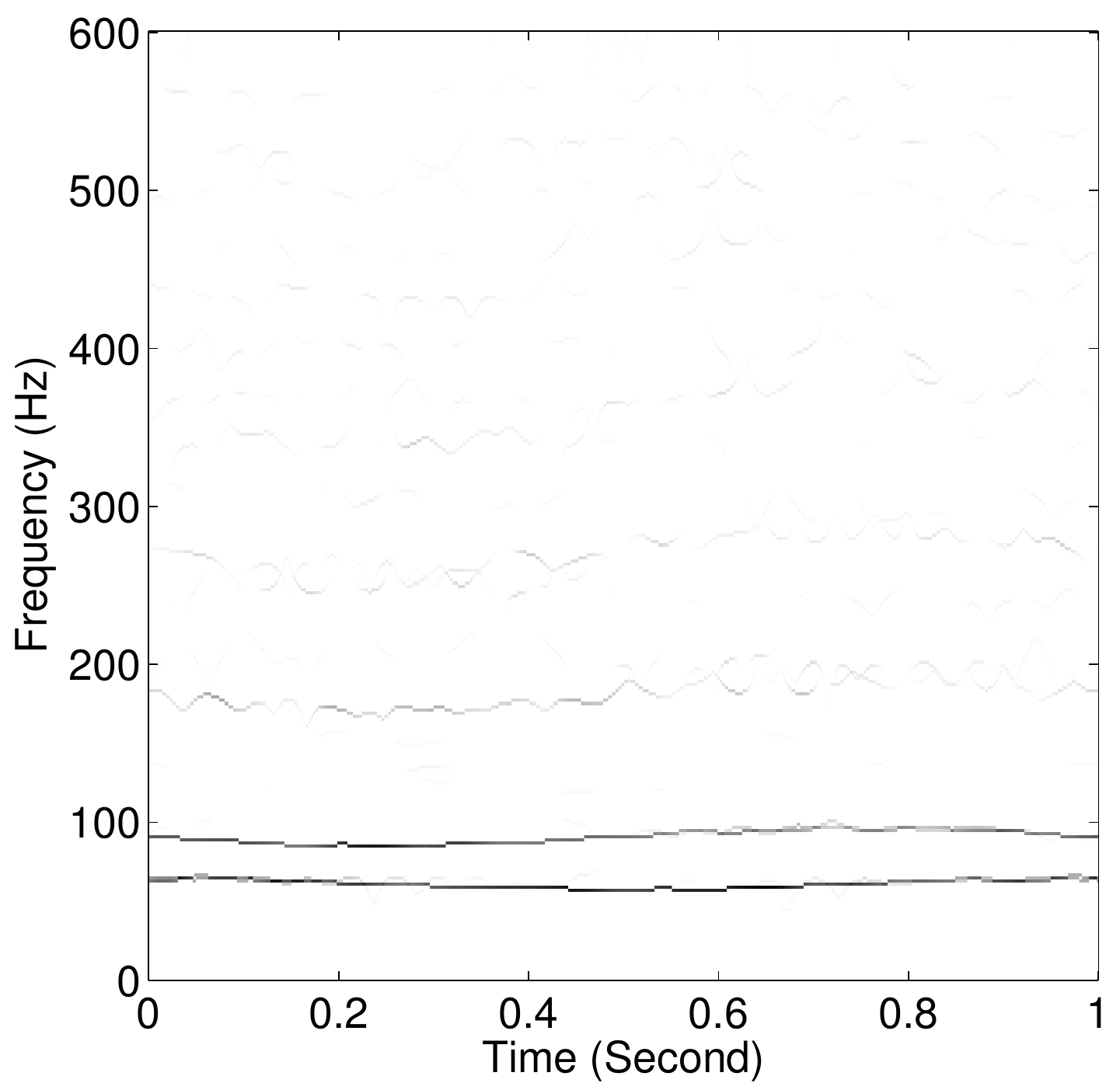}&  \includegraphics[height=1.3in]{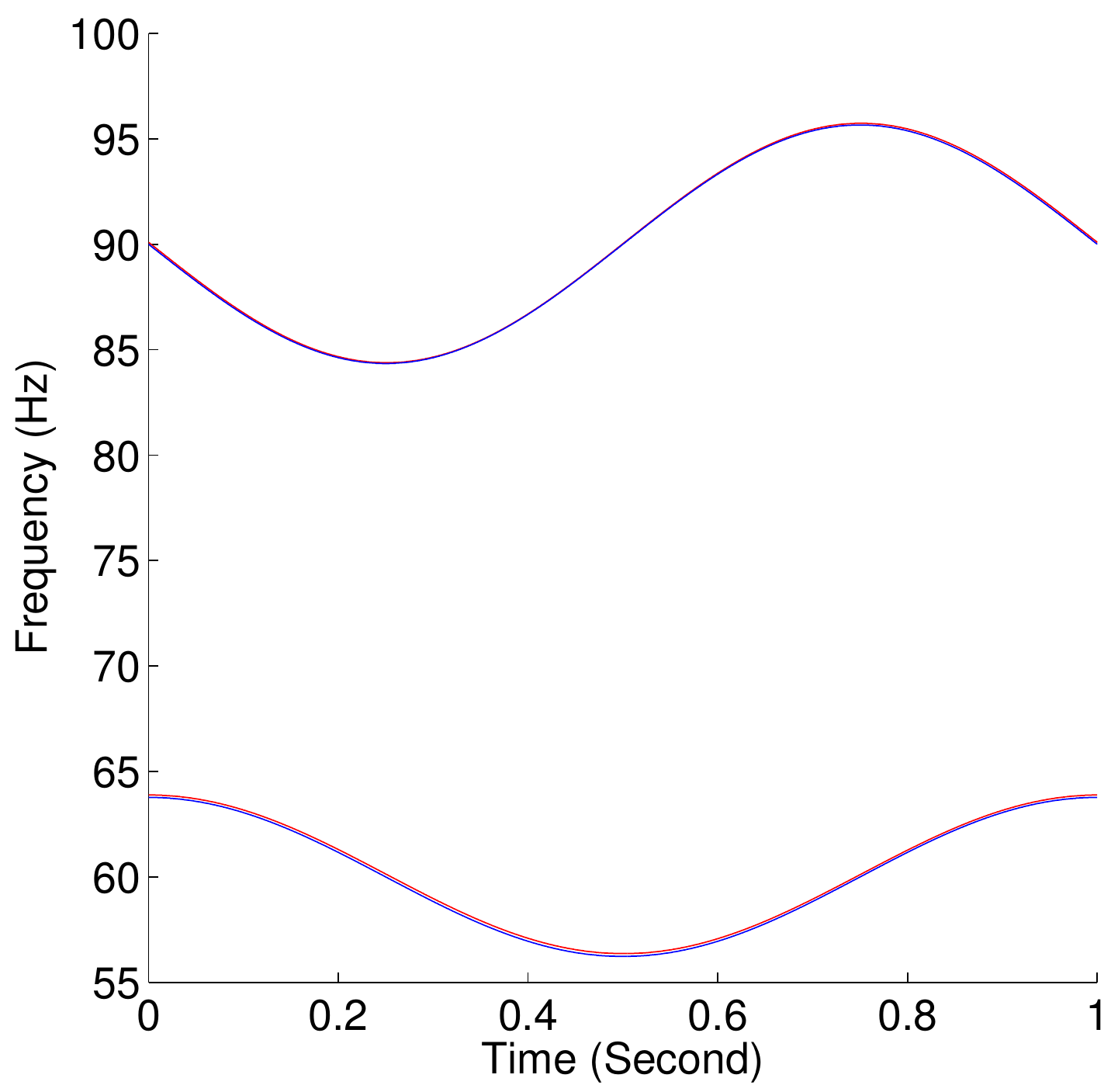} 
     & \includegraphics[height=1.3in]{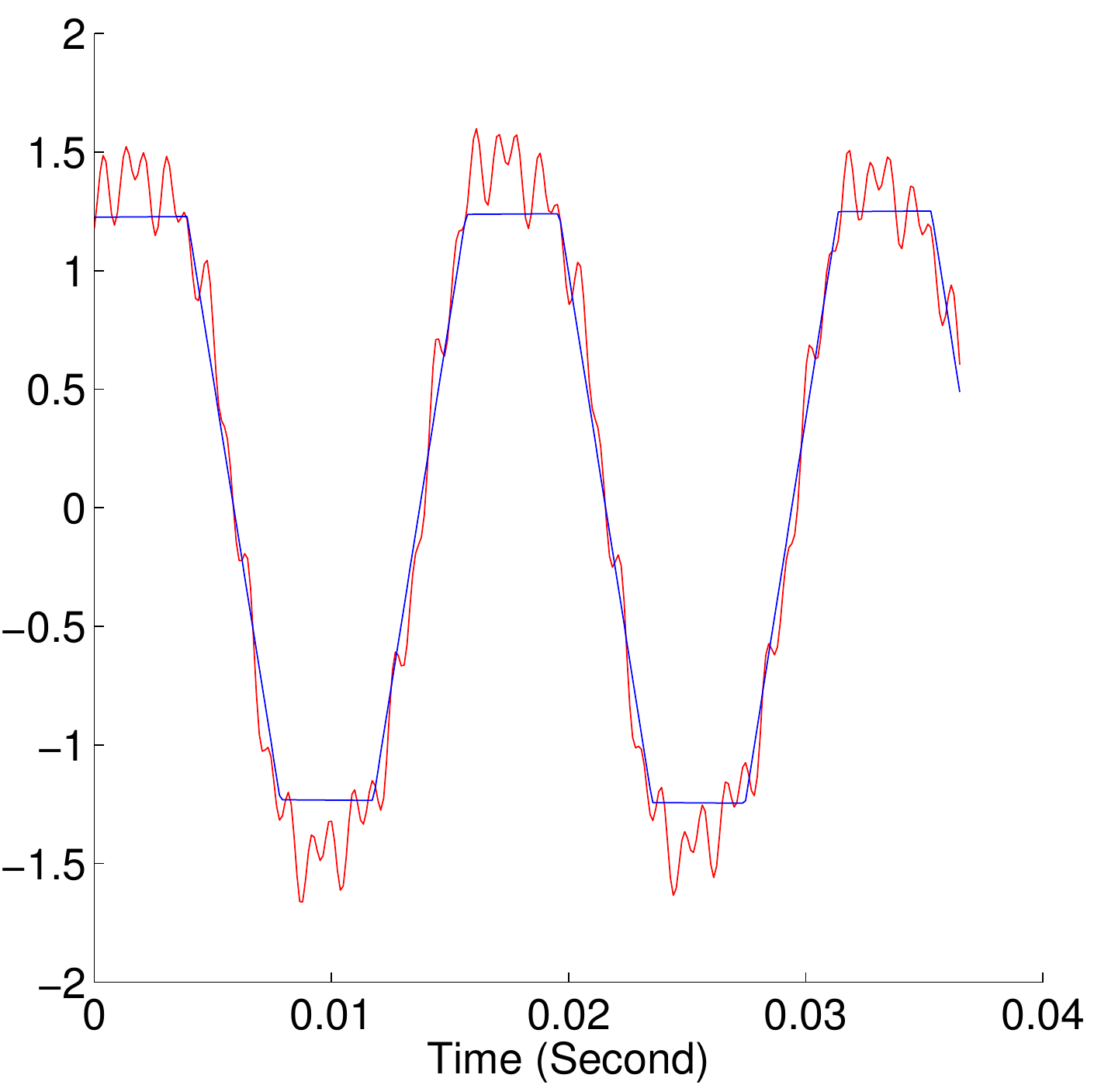} & \includegraphics[height=1.3in]{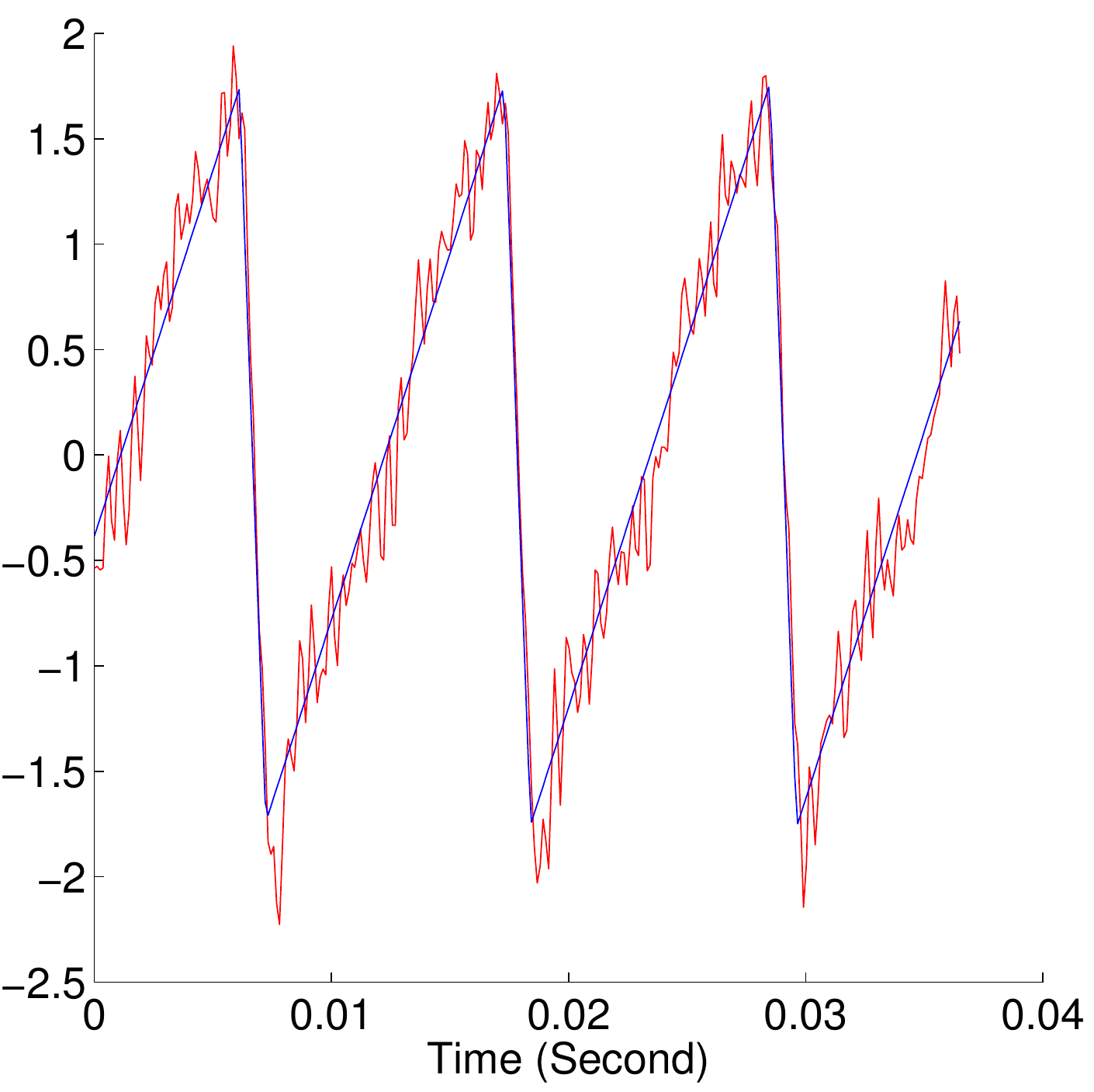}\\
    \end{tabular}
  \end{center}
  \caption{From top to bottom are the results of noisy Example $1$ with different $\SNR$s $6$, $0$, and $-3$, respectively. Left: The synchrosqueezed energy distributions of signals. Middle left: The real instantaneous frequencies (blue) and the estimated instantaneous frequencies (red). Middle right and right: Recovered general modes.}
\label{fig:EX1_ns_rec}
\end{figure}

\textbf{Example $4$:}
Combining the proposed methods with some post processing techniques can detect general shape functions in a wider class than the one defined in Definition \ref{def:GSF}.  For example, we study the superposition of two general modes with piecewise constant shape functions.  A noisy superposition of general modes is generated as follows.
\begin{eqnarray*}
f(t)=\alpha_3(t)s_3(2\pi N_3\phi_3(t))+\alpha_4(t)s_4(2\pi N_4\phi_4(t))+n(t),
\end{eqnarray*}
where $s_3(t)$ and $s_4(t)$ are defined in Figure \ref{fig:s3s4}, $\alpha_3(t) = 1+0.4\sin(4\pi t)$, $\alpha_4(t) = 1-0.3\sin(2\pi t)$, $N_3=120$, $N_4=185$, $\phi_3(t) = t+0.005\sin(2\pi t)$, and $\phi_4(t)= t+0.01\cos(4\pi t)$. In this example, the 1D SSWPT is applied to estimate the instantaneous information first and then the DSA method is applied to decompose $f(t)$ into two general modes. Finally, a TV norm minimization is applied to obtain the final results shown in Figure \ref{fig:s3s4decom}. The DSA method is able to detect the basic feature of these general modes and the post processing TV norm minimization helps to reduce the noise.

\begin{figure}
  \begin{center}
    \begin{tabular}{cccc}
       \includegraphics[height=1.3in]{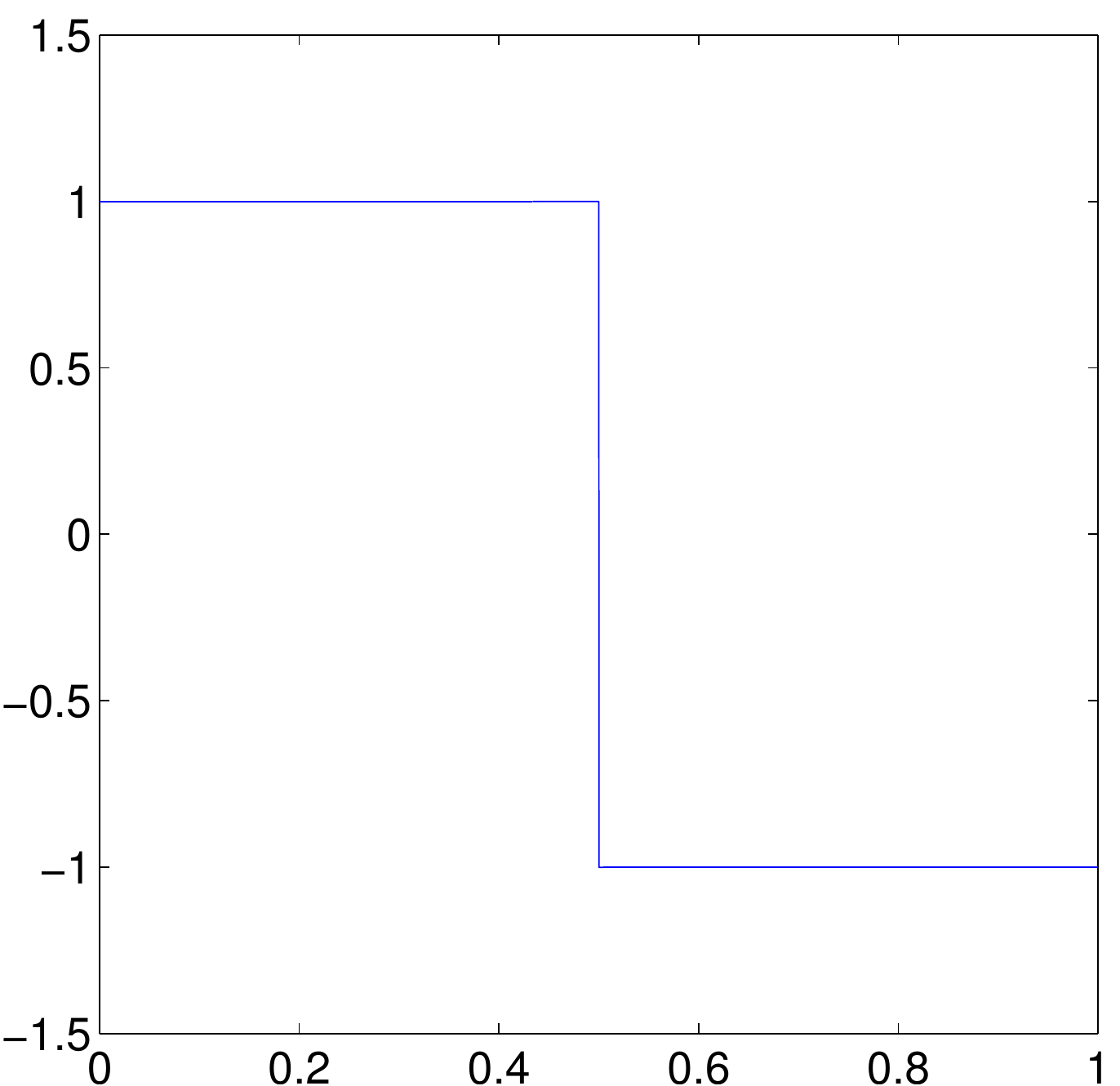}&  \includegraphics[height=1.3in]{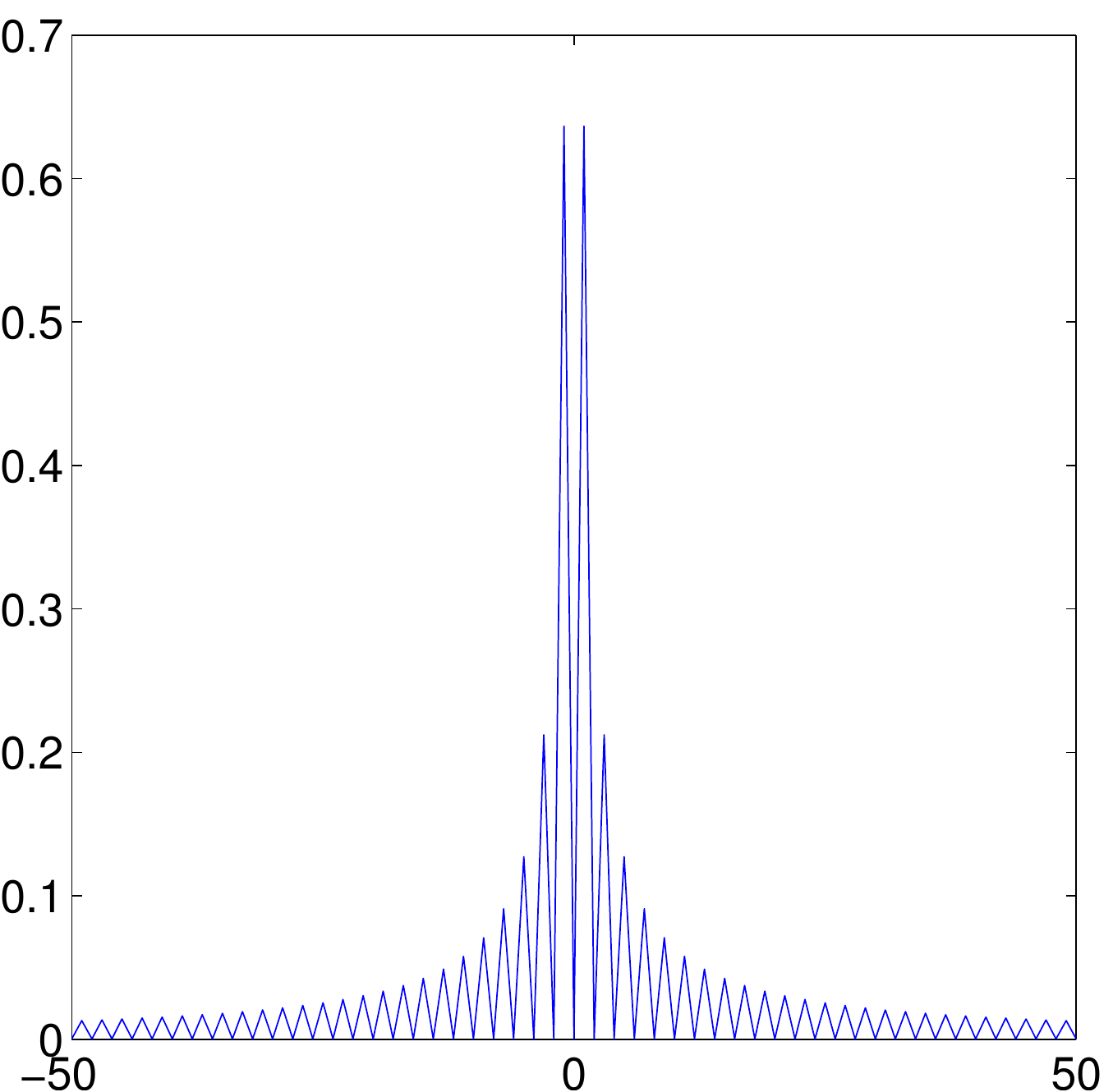}& 
\includegraphics[height=1.3in]{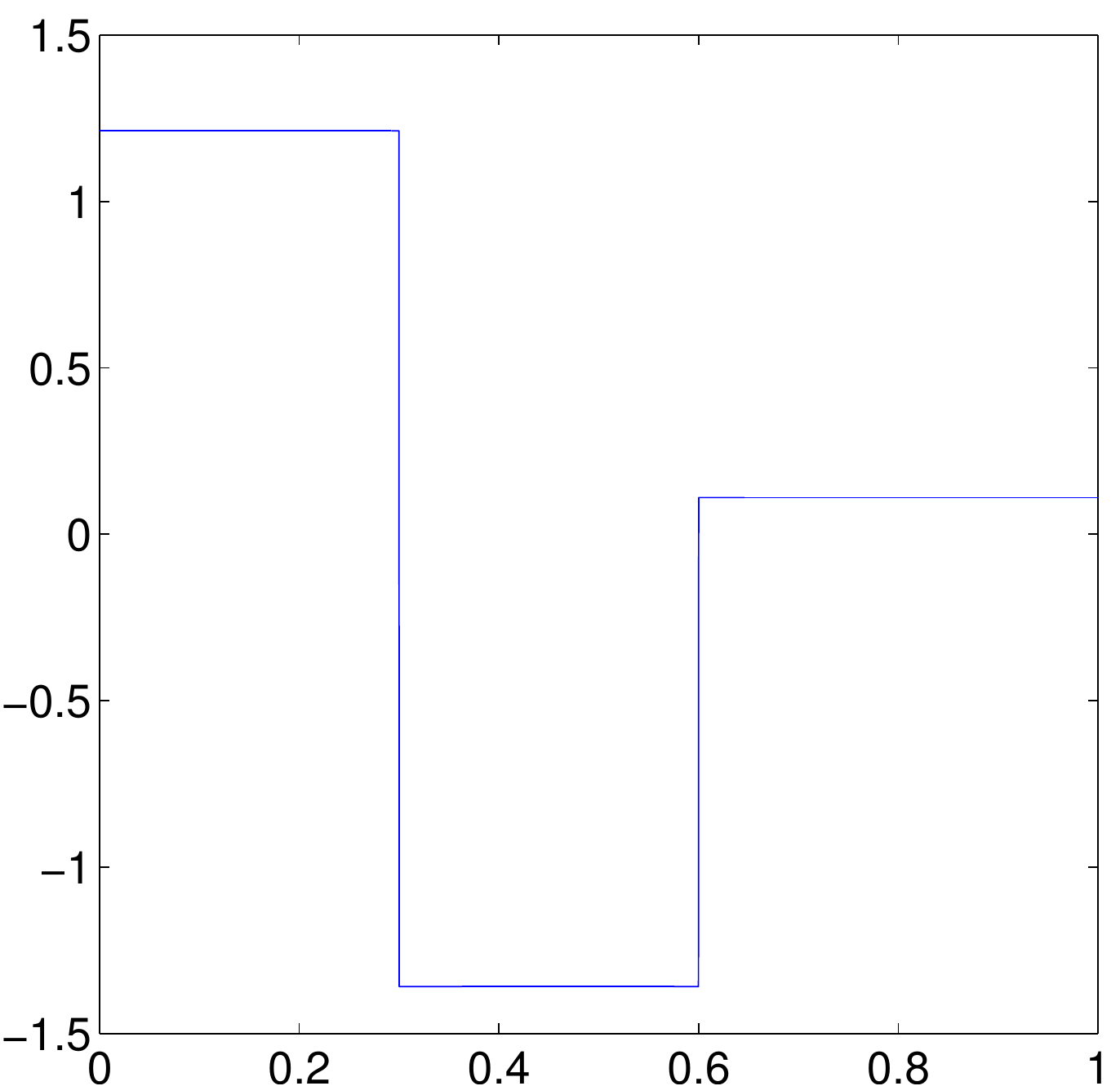} & \includegraphics[height=1.3in]{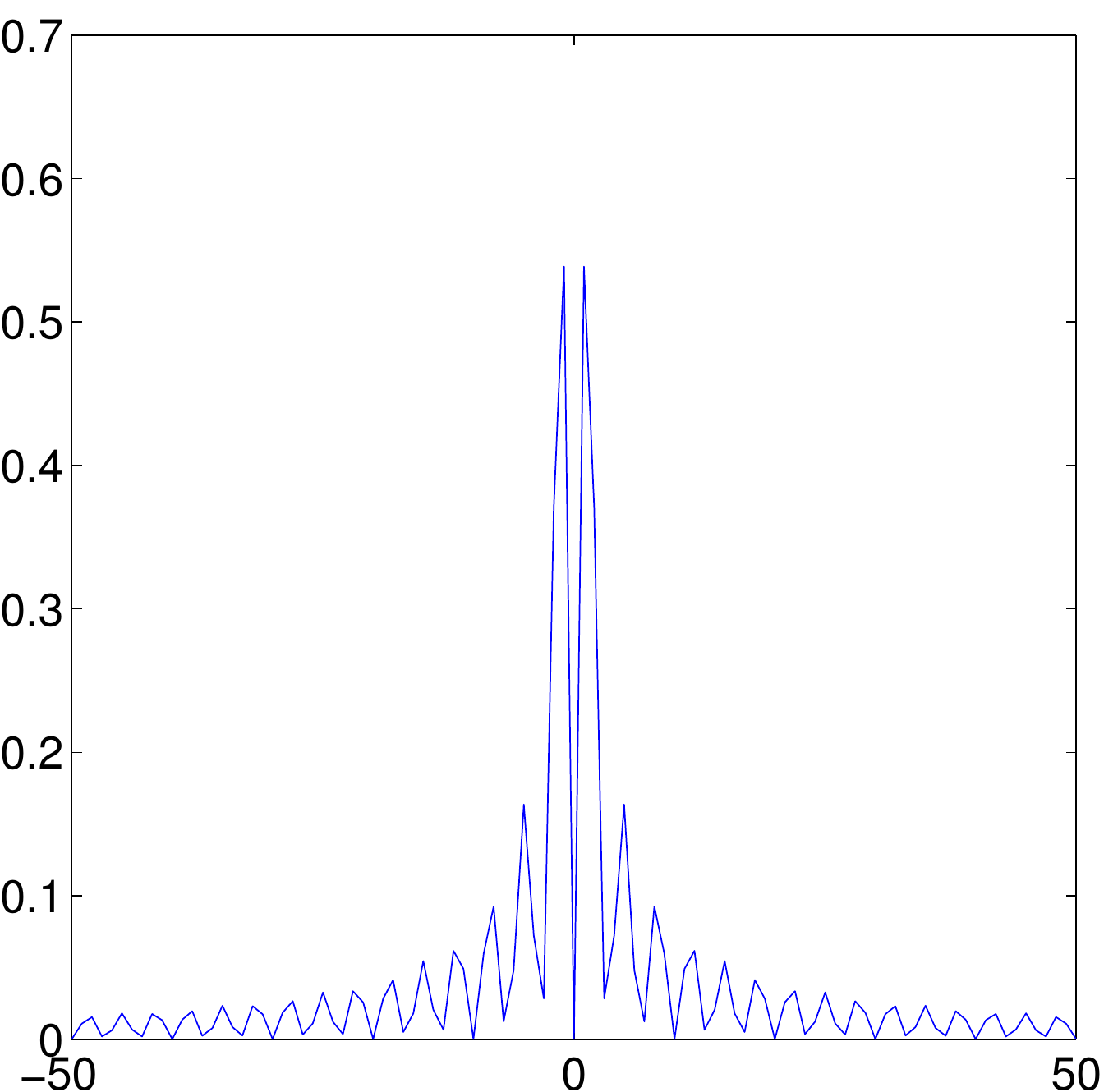} 
    \end{tabular}
  \end{center}
  \caption{Left: A piecewise constant general shape function  $s_3(t)$ and its spectral energy $|\widehat{s_3}(\xi)|$. Right: $s_4(t)$ and its spectral energy $|\widehat{s_4}(\xi)|$. }
\label{fig:s3s4}
\end{figure}

\begin{figure}
  \begin{center}
    \begin{tabular}{cc}
      \includegraphics[height=1.8in]{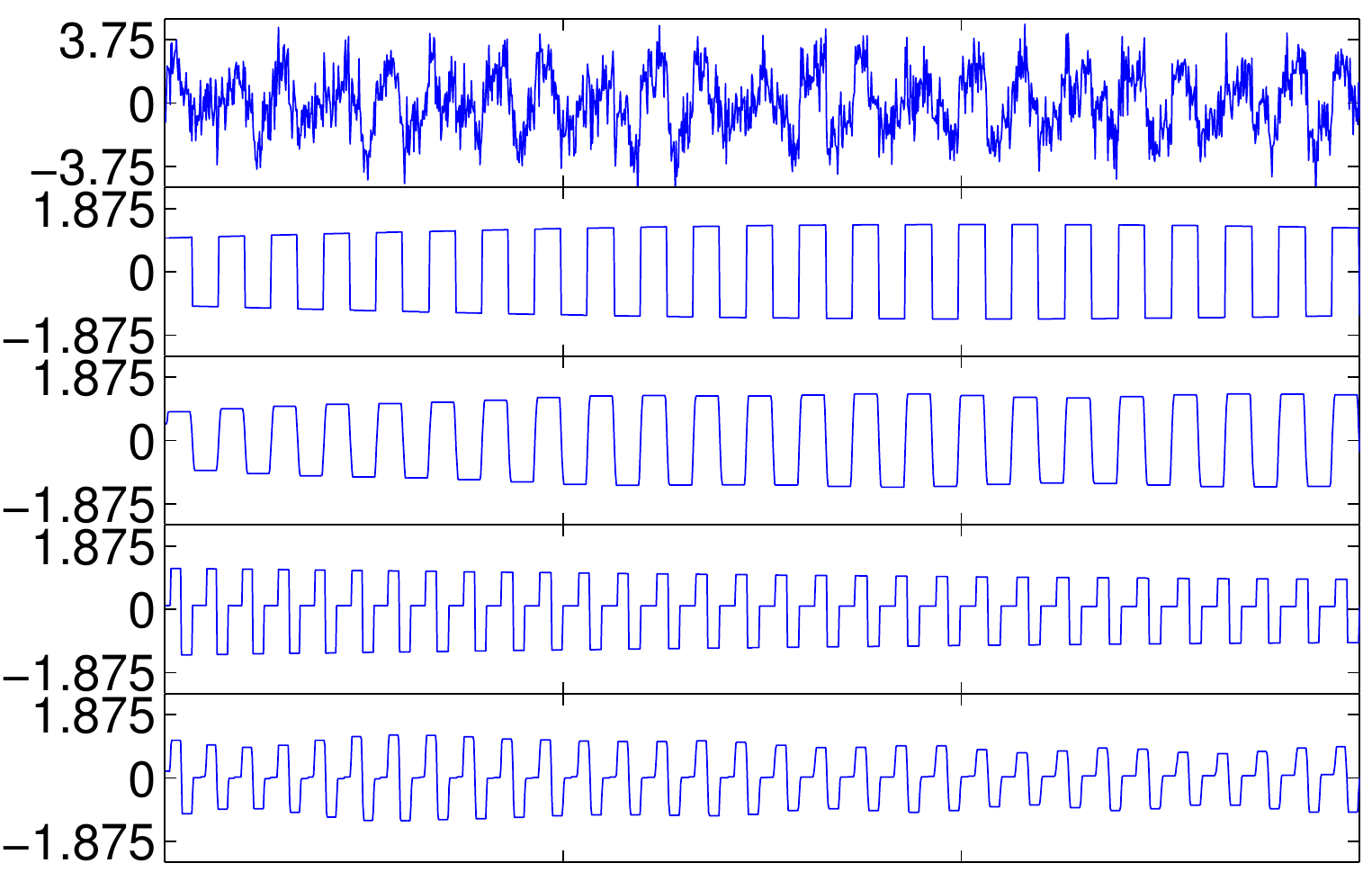}&
\includegraphics[height=1.8in]{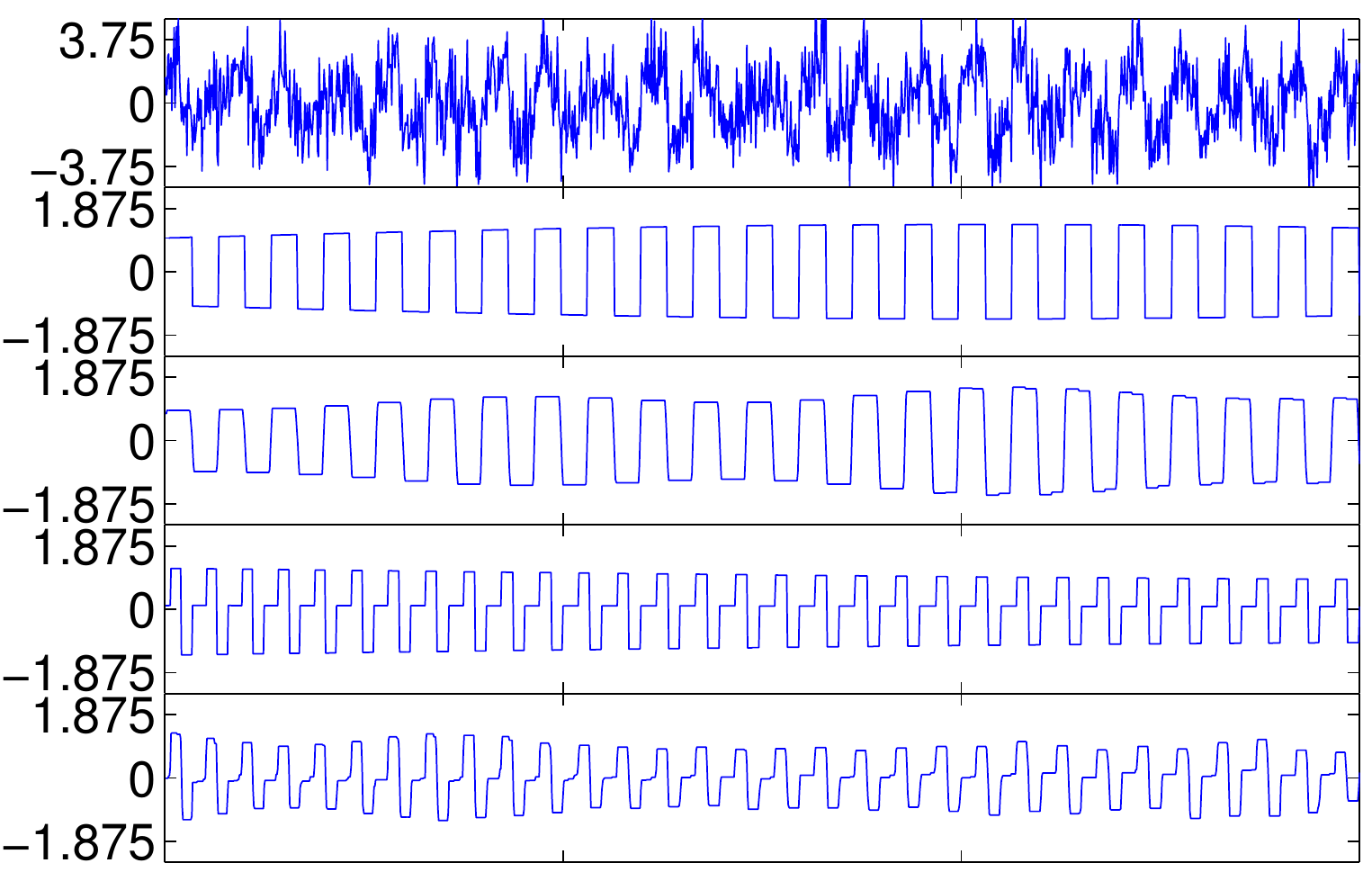}
    \end{tabular}
  \end{center}
  \caption{Left: Decomposition results of a noisy signal with $\SNR=0$. Right: Decomposition results of a noisy signal with $\SNR=-3$.  The first row: Noisy superpositions. The second and the third row: The first noiseless general mode and its recovered result.  The forth and the fifth row: The second noiseless general mode and its recovered result. }
\label{fig:s3s4decom}
\end{figure}

\subsection{Real applications}
\textbf{Example $5$:} In the first example of real applications, we study the ECG signals. Two real ECG general shape functions $s_5(t)$ and $s_6(t)$ (see Figure \ref{fig:s5s6}) are cut out from real ECG signals in \cite{PhysioNet} and \cite{Hau-Tieng2013} . A noiseless superposition is generated as follows.
\begin{eqnarray*}
f(t)=\alpha_5(t)s_5(2\pi N_5\phi_5(t))+\alpha_6(t)s_6(2\pi N_6\phi_6(t)),
\end{eqnarray*}
where $\alpha_5(t) = 1+0.05\sin(2\pi t)$, $\alpha_6(t) = 1+0.05\cos(2\pi t)$, $N_5=150$, $N_6=220$, $\phi_5(t) = t+0.006\sin(2\pi t)$, and $\phi_6(t)= t+0.006\cos(2\pi t)$. The instantaneous frequencies and the real shape functions are accurately estimated as shown in Figure \ref{fig:s5s6}. To demonstrate the robustness of the proposed methods for ECG signals, a noisy superposition is generated by adding a noise term $n(t)$. As shown in Figure \ref{fig:s5s6ns}, the synchrosqueezed energy distribution is well concentrated around the real instantaneous frequencies and the instantaneous frequencies are accurately estimated. Most importantly, the main spikes of real ECG shape functions are precisely recovered, even if the $\SNR$ is small.

\begin{figure}
  \begin{center}
    \begin{tabular}{c}
      \includegraphics[height=1.6in]{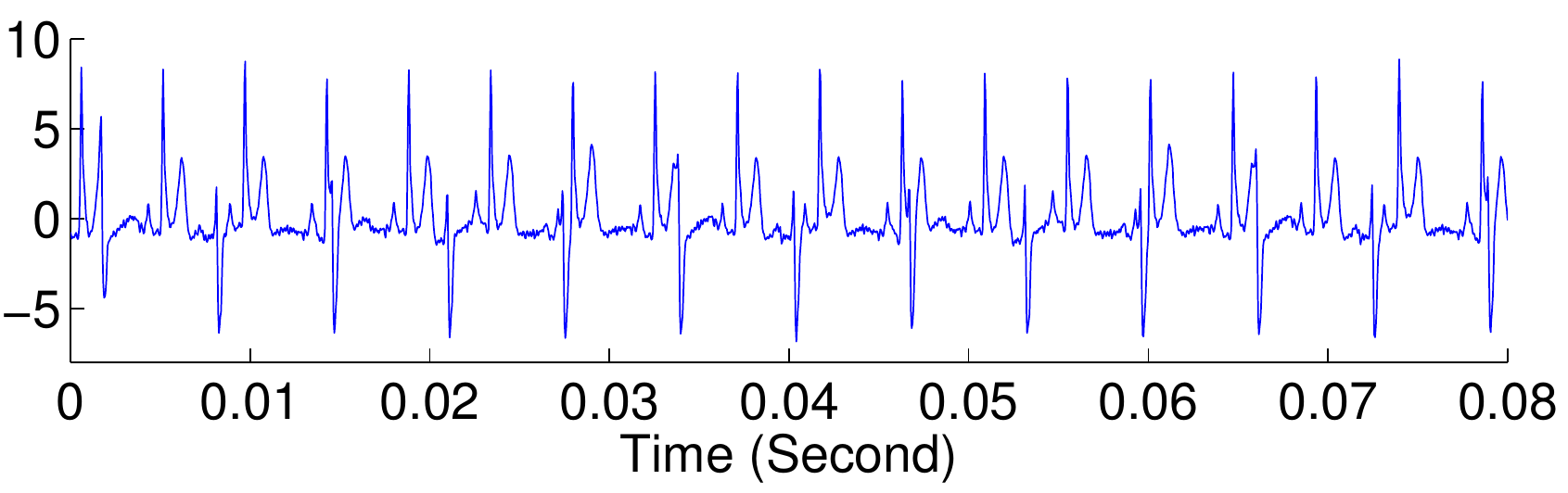}  
    \end{tabular}
    \begin{tabular}{cccc}
     \includegraphics[height=1.3in]{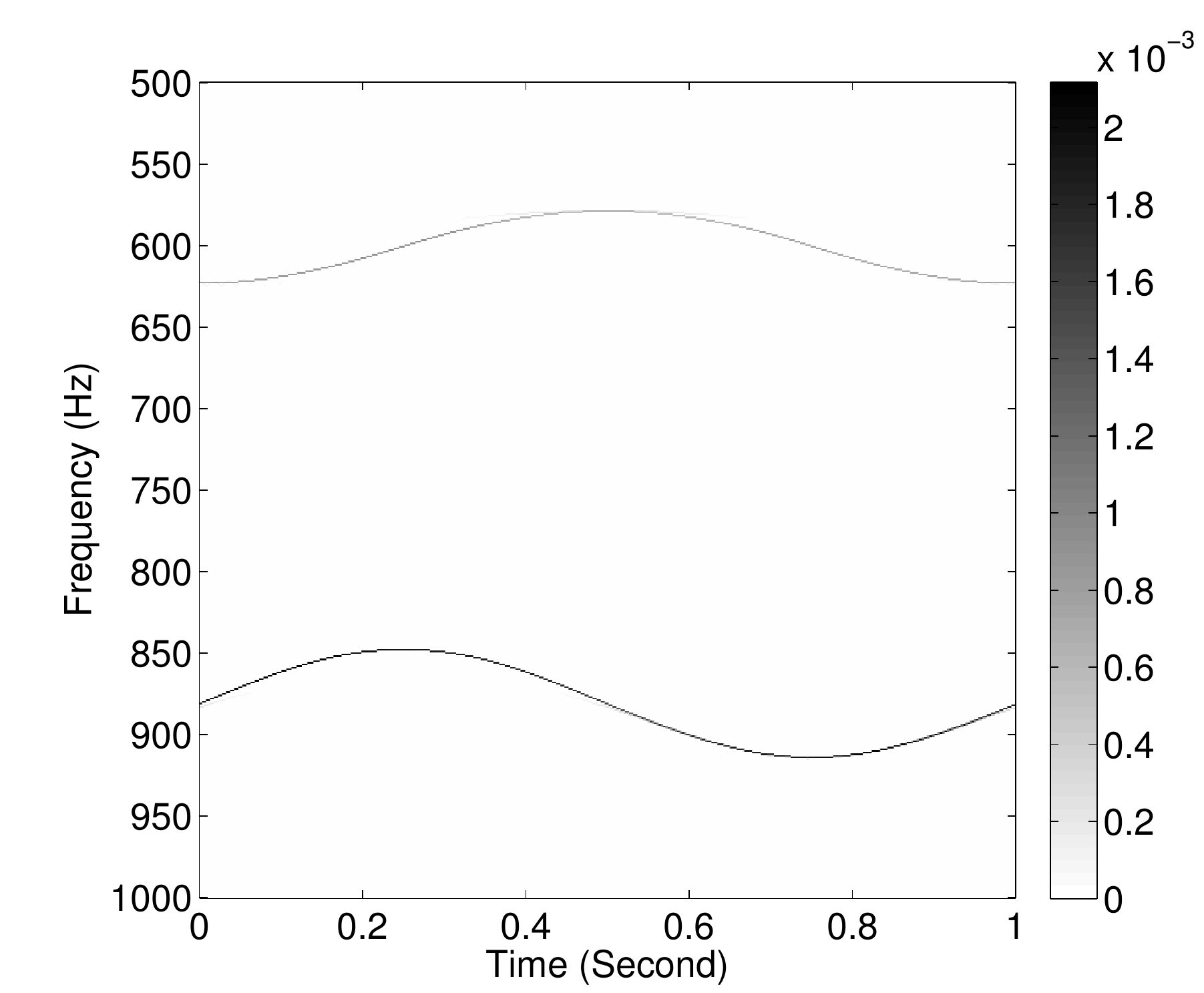}&           \includegraphics[height=1.3in]{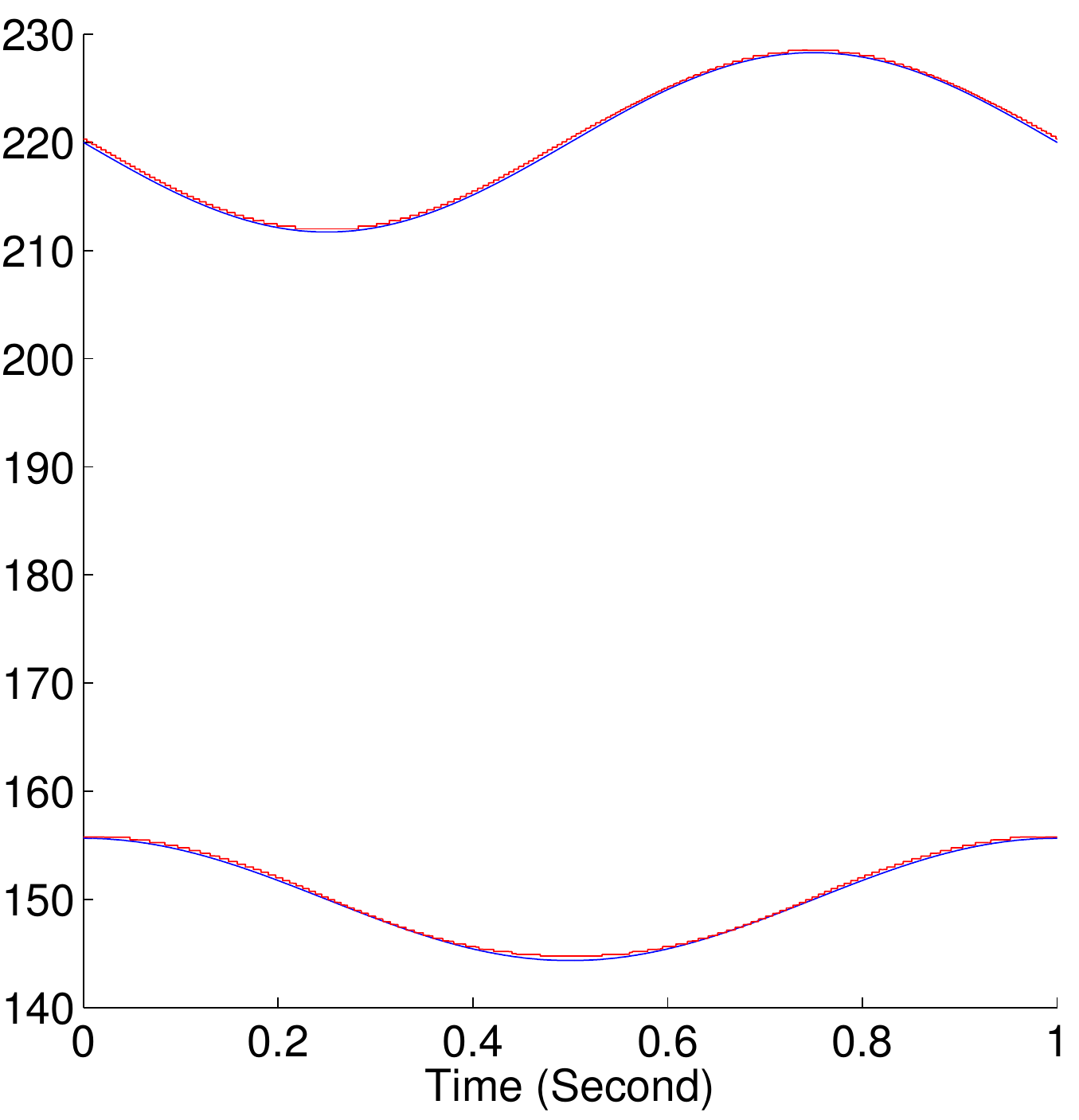}
&\includegraphics[height=1.3in]{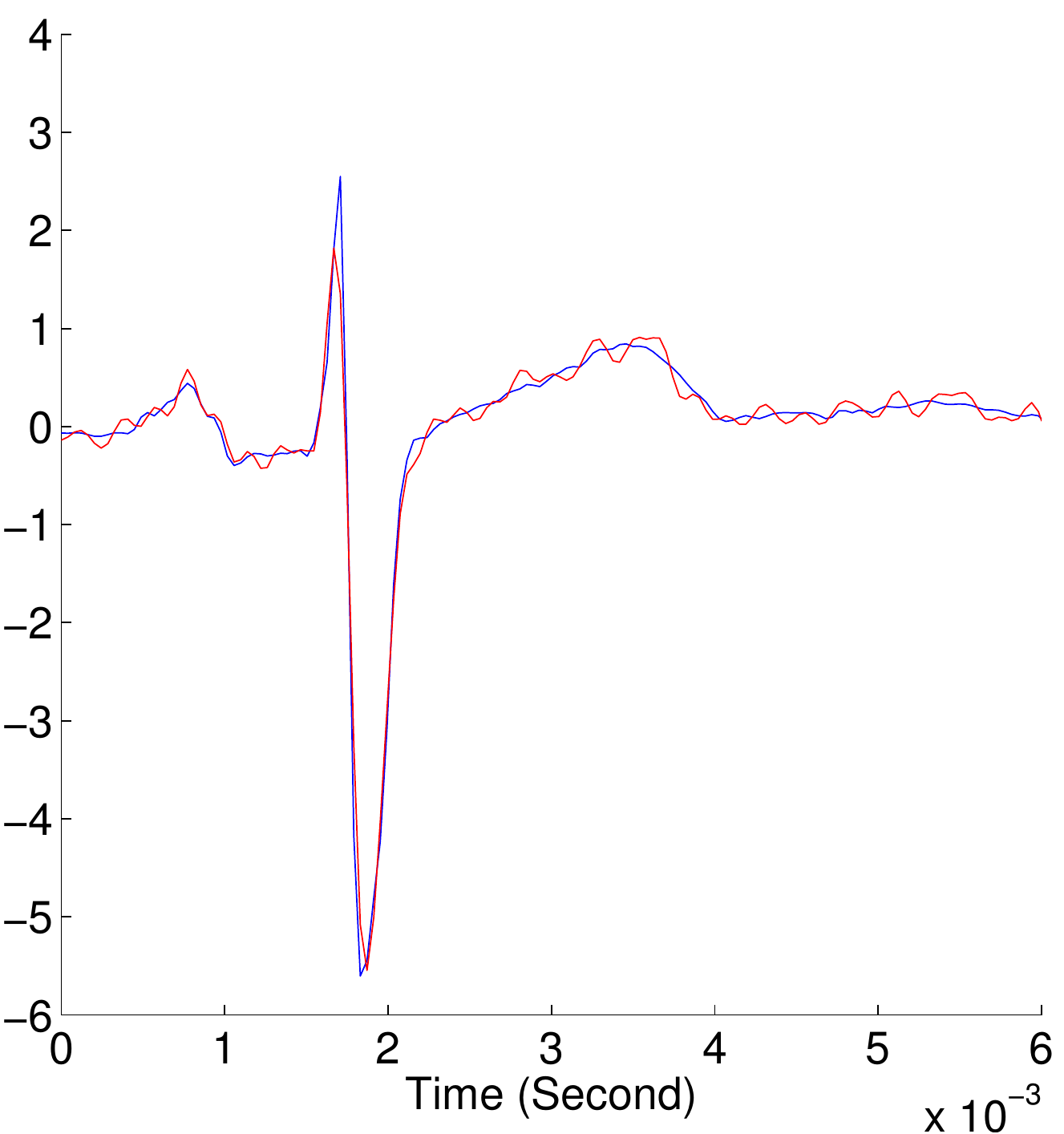} & \includegraphics[height=1.3in]{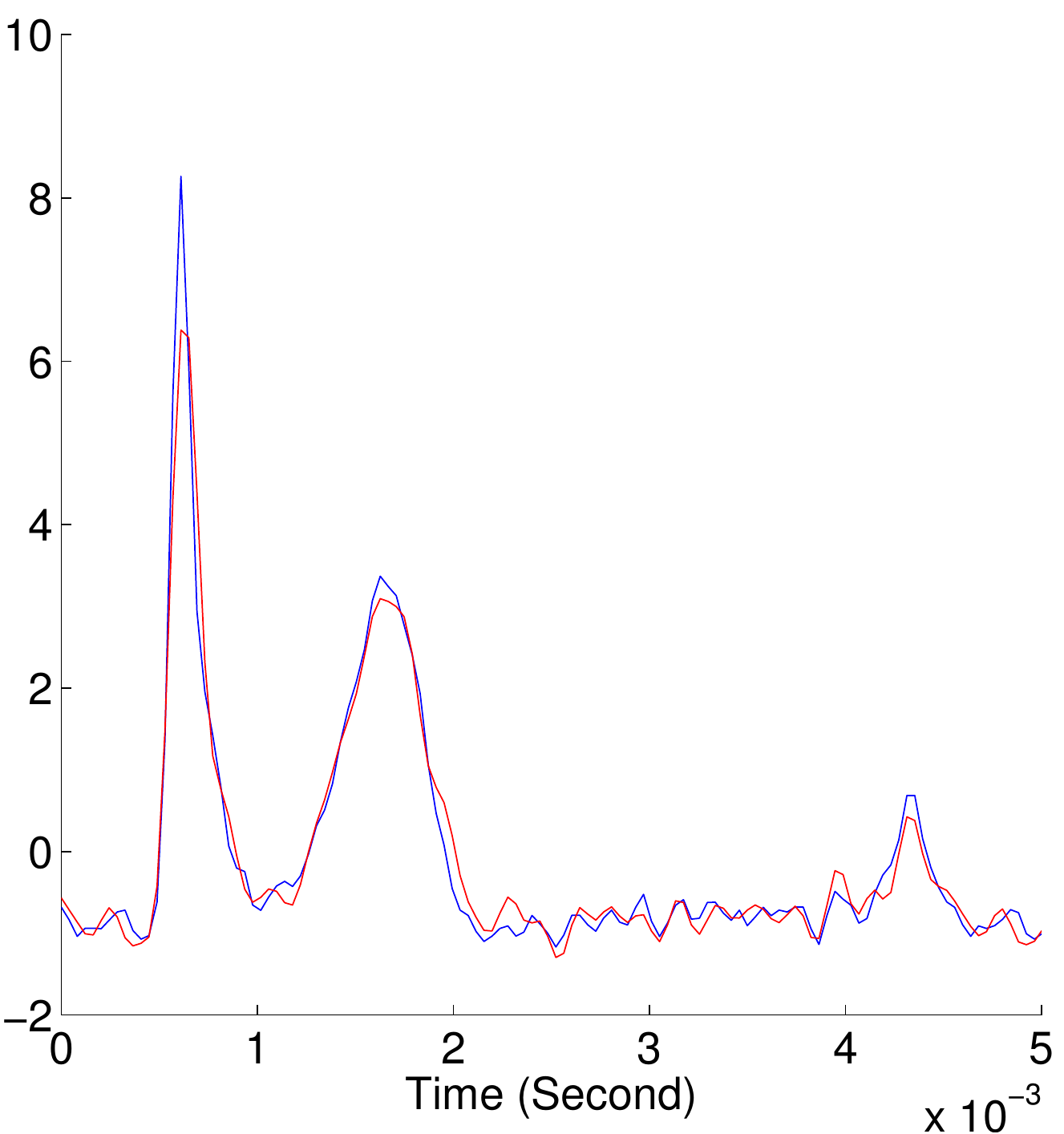}
    \end{tabular}
  \end{center}
  \caption{Top: A superposition of two synthetic ECG signals. Bottom left: The synchrosqueezed energy distribution. Bottom middle left: Real instantaneous frequencies (red) and instantaneous frequency estimates (blue). Bottom middle right: Real ECG shape function $s_5(t)$ (blue) and its estimate (red). Bottom right: Real ECG shape function $s_6(t)$ (blue) and its estimate (red). }
\label{fig:s5s6}
\end{figure}

\begin{figure}
  \begin{center}
    \begin{tabular}{c}
      \includegraphics[height=1.6in]{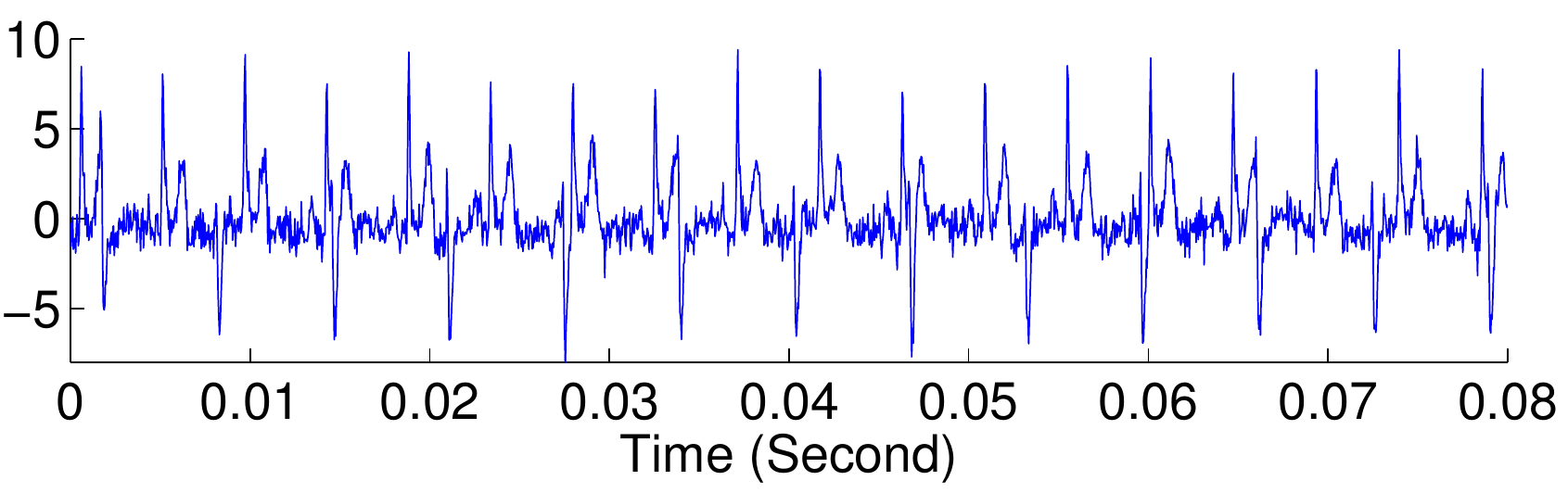}  
    \end{tabular}
    \begin{tabular}{cccc}
     \includegraphics[height=1.3in]{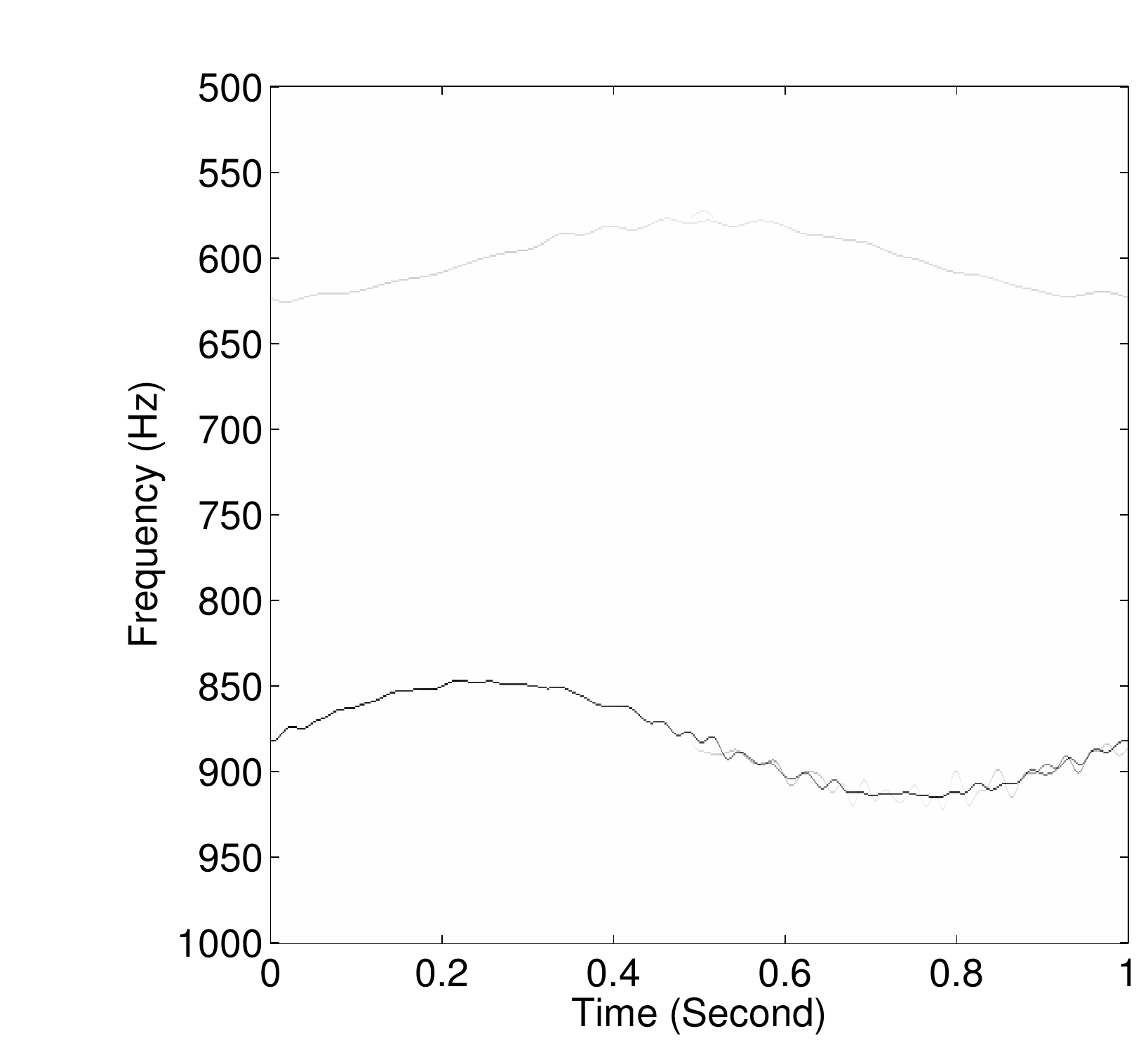}&           \includegraphics[height=1.3in]{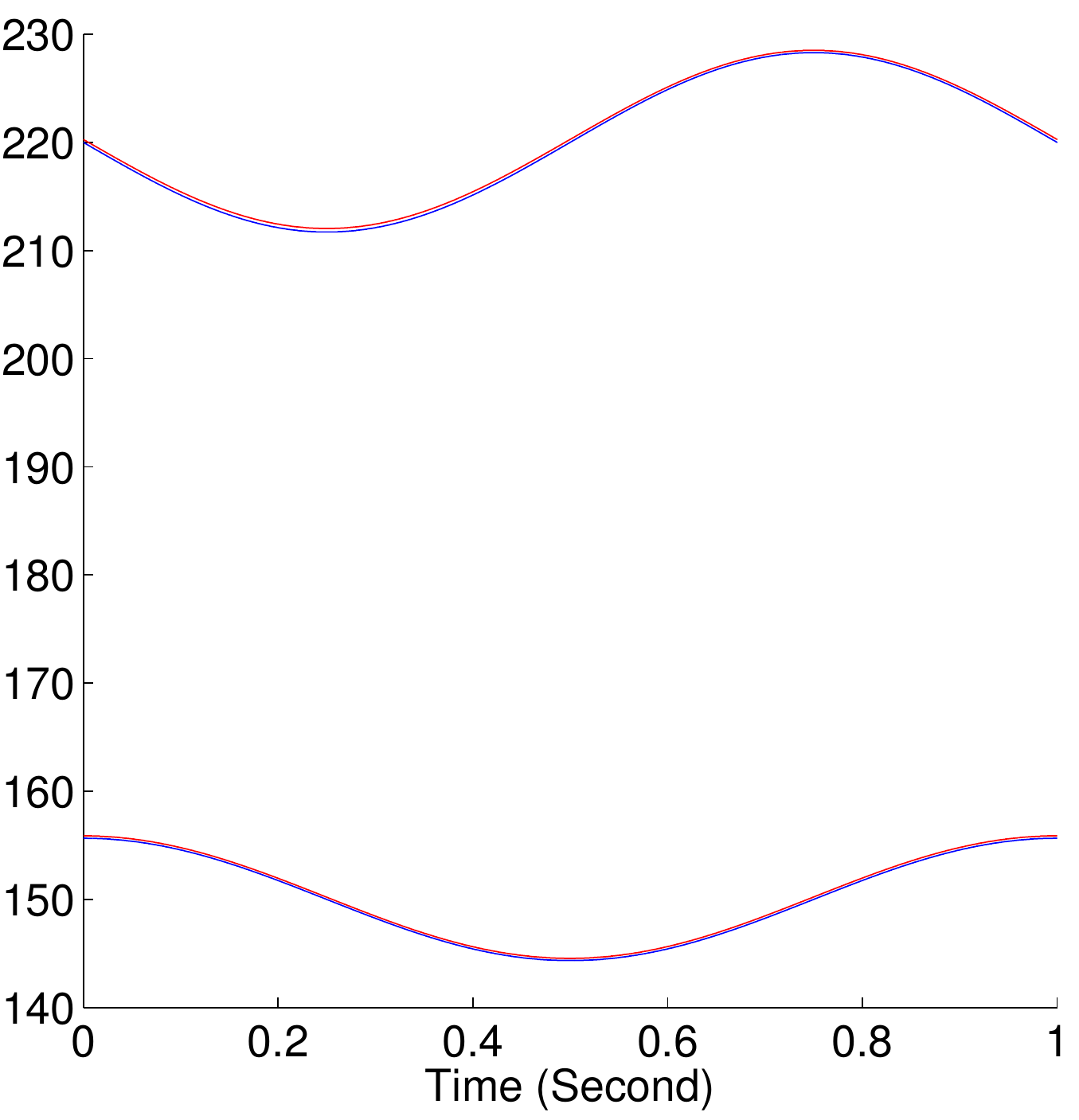}
&\includegraphics[height=1.3in]{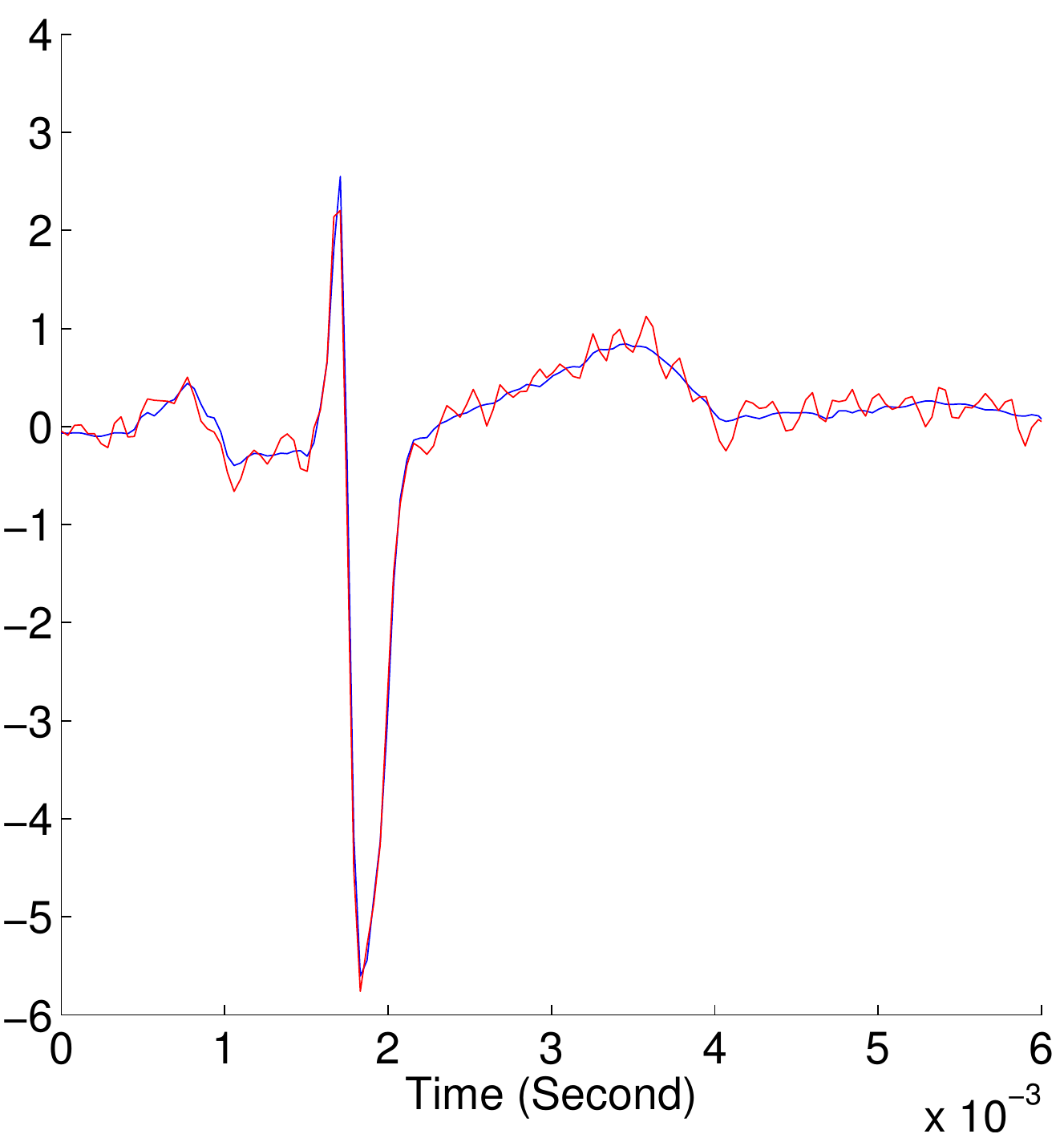} & \includegraphics[height=1.3in]{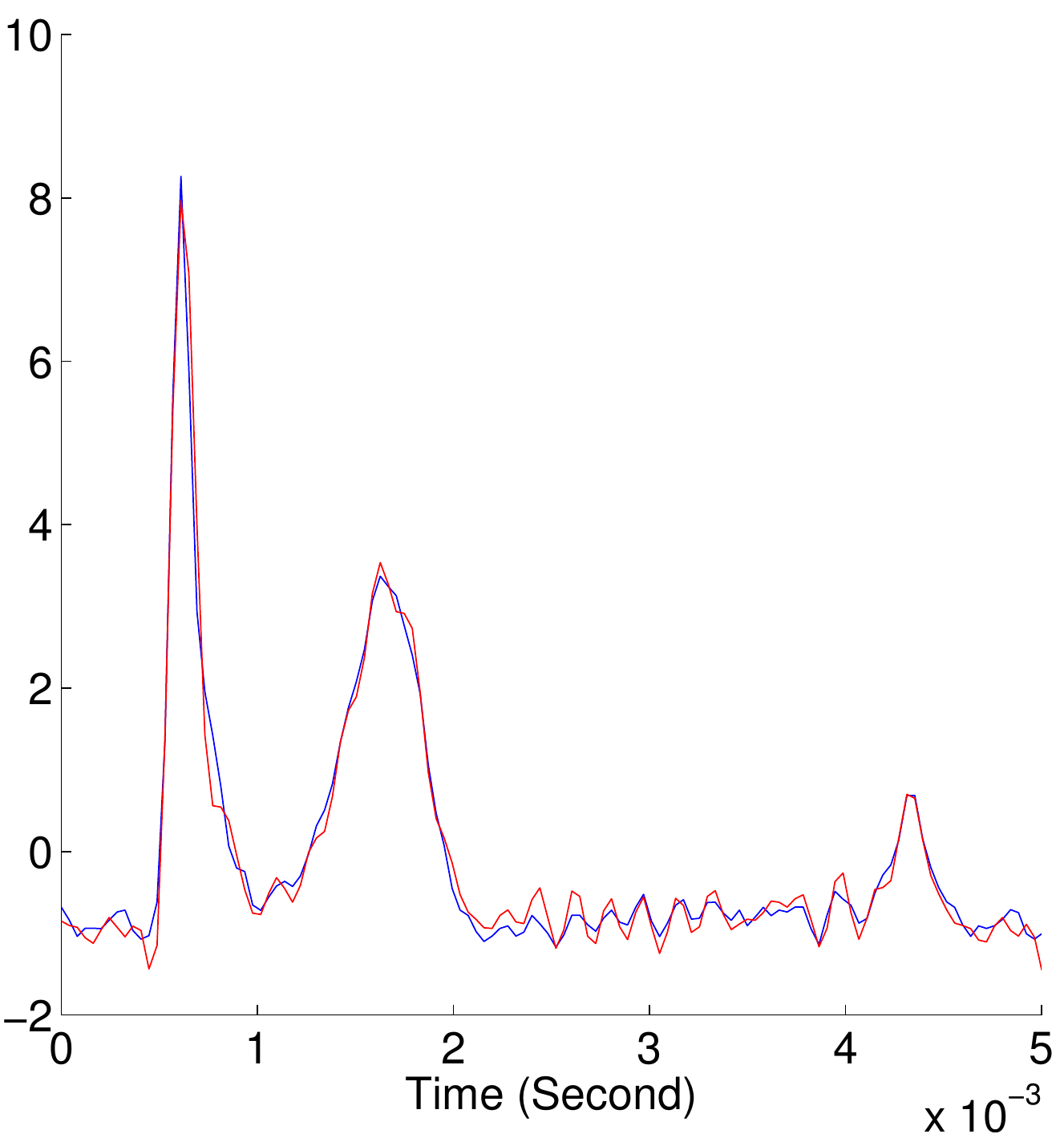}
    \end{tabular}
  \end{center}
  \caption{Top: A noisy superposition of two synthetic ECG signals with $\SNR=0$. Bottom left: The synchrosqueezed energy distribution. Bottom middle left: Real instantaneous frequencies (red) and instantaneous frequency estimates (blue). Bottom middle right: Real ECG shape function $s_5(t)$ (blue) and its estimate (red). Bottom right: Real ECG shape function $s_6(t)$ (blue) and its estimate (red). }
\label{fig:s5s6ns}
\end{figure}

\textbf{Example $6$:} Let us revisit the example shown in Figure \ref{fig:CO2wavelet} in the introduction. The original data $f_0(t)$ has a slowly growing trend linear in time. Suppose $f_r(t)$ is the linear regression of $f_0(t)$ and let $f(t)=f_0(t)-f_r(t)$. The synchrosqueezed energy distribution of $f(t)$ shown in Figure \ref{fig:CO2wp} left has three essential supports corresponding to three wave-like components. By weighting the locations of theses supports, we obtain the instantaneous frequency estimates of each component as shown in Figure \ref{fig:CO2wp}. According to the evolutive pattern of the intrinsic frequencies, there are only two general modes contained in the superposition. The curve classification step in the GMDWP method automatically groups the annual estimate and the semiannual estimate together.  Hence, the decomposition result of the GMDWP method contains a general mode which is the sum of the annual cycle and the semiannual cycle shown in Figure \ref{fig:CO2wpdec}. Because of the low frequency of the third term, it is reasonable to combine it with $f_r(t)$ to obtain a slowly varying growing trend shown in Figure \ref{fig:CO2wpdec}.

\begin{figure}
  \begin{center}
    \begin{tabular}{cccc}
\includegraphics[height=1.3in]{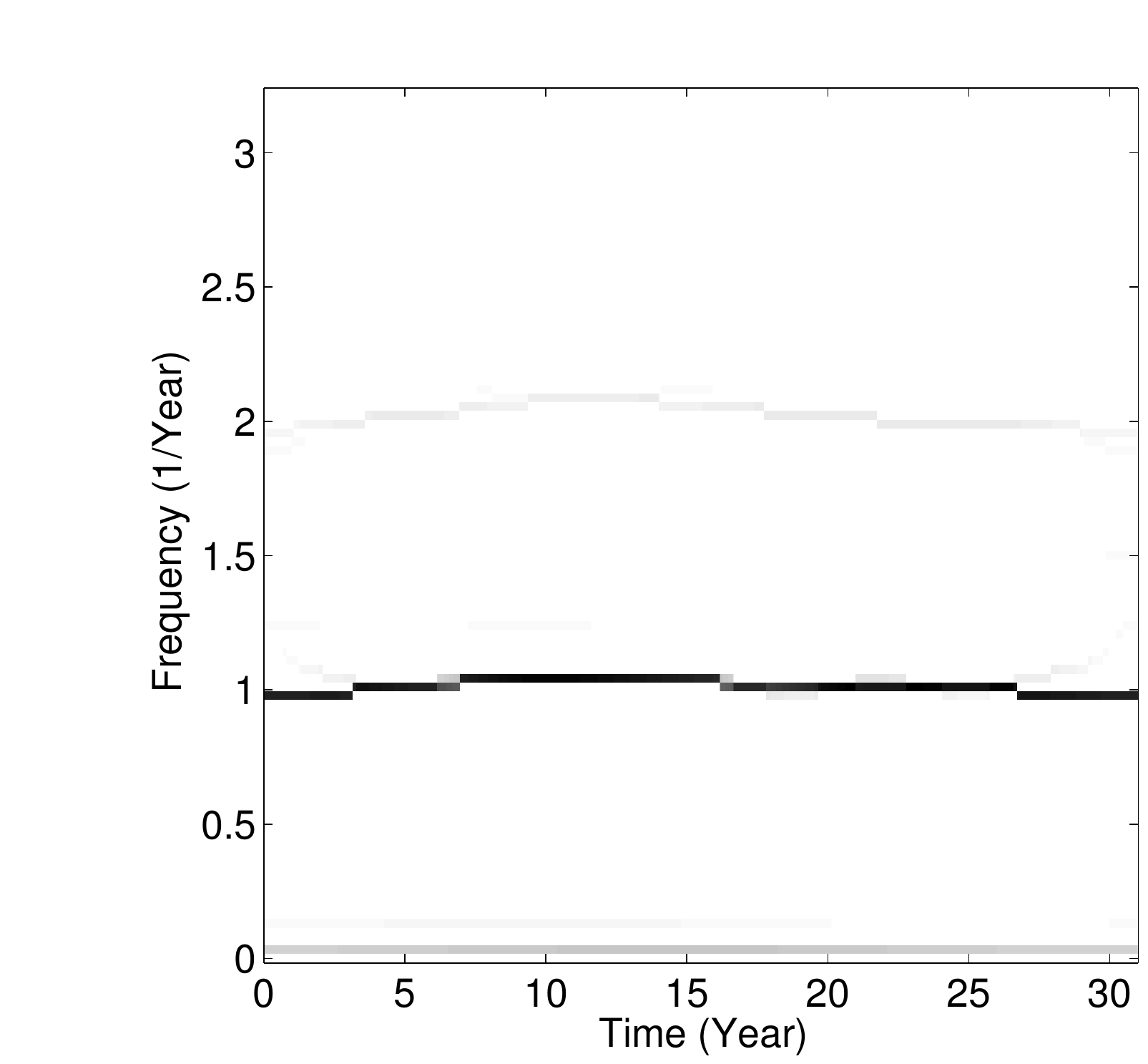}
& \includegraphics[height=1.3in]{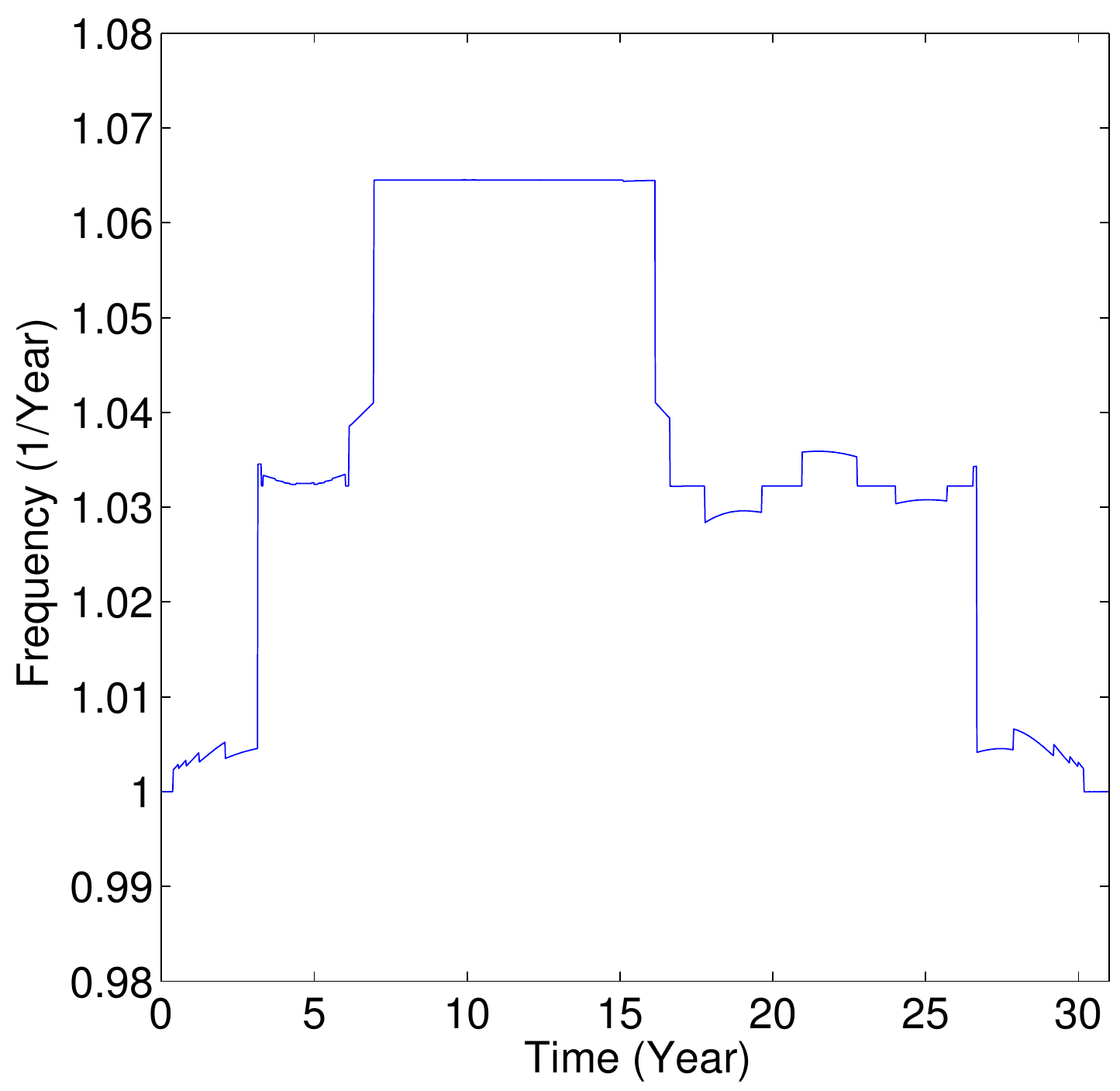}
&\includegraphics[height=1.3in]{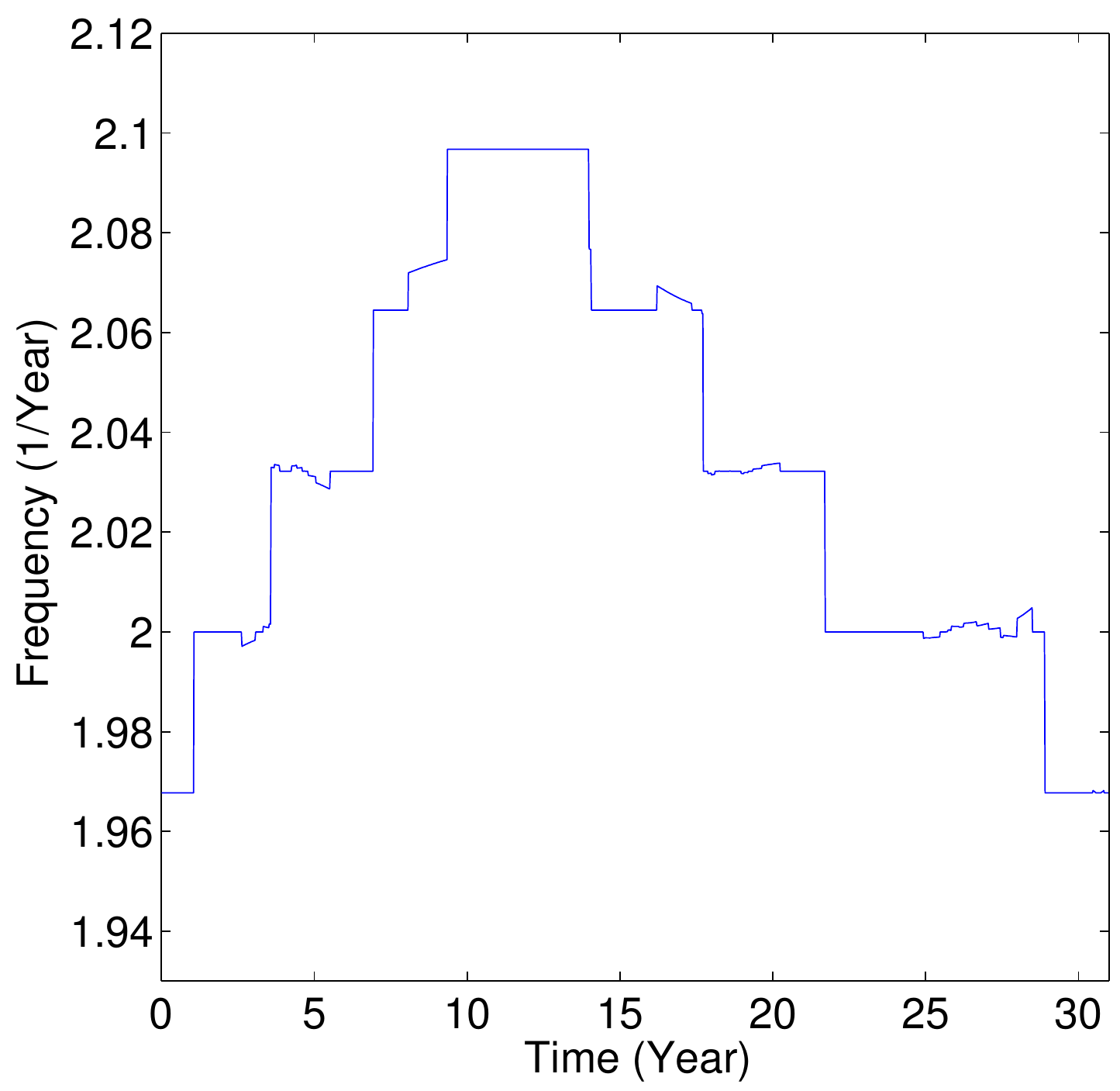}
&\includegraphics[height=1.3in]{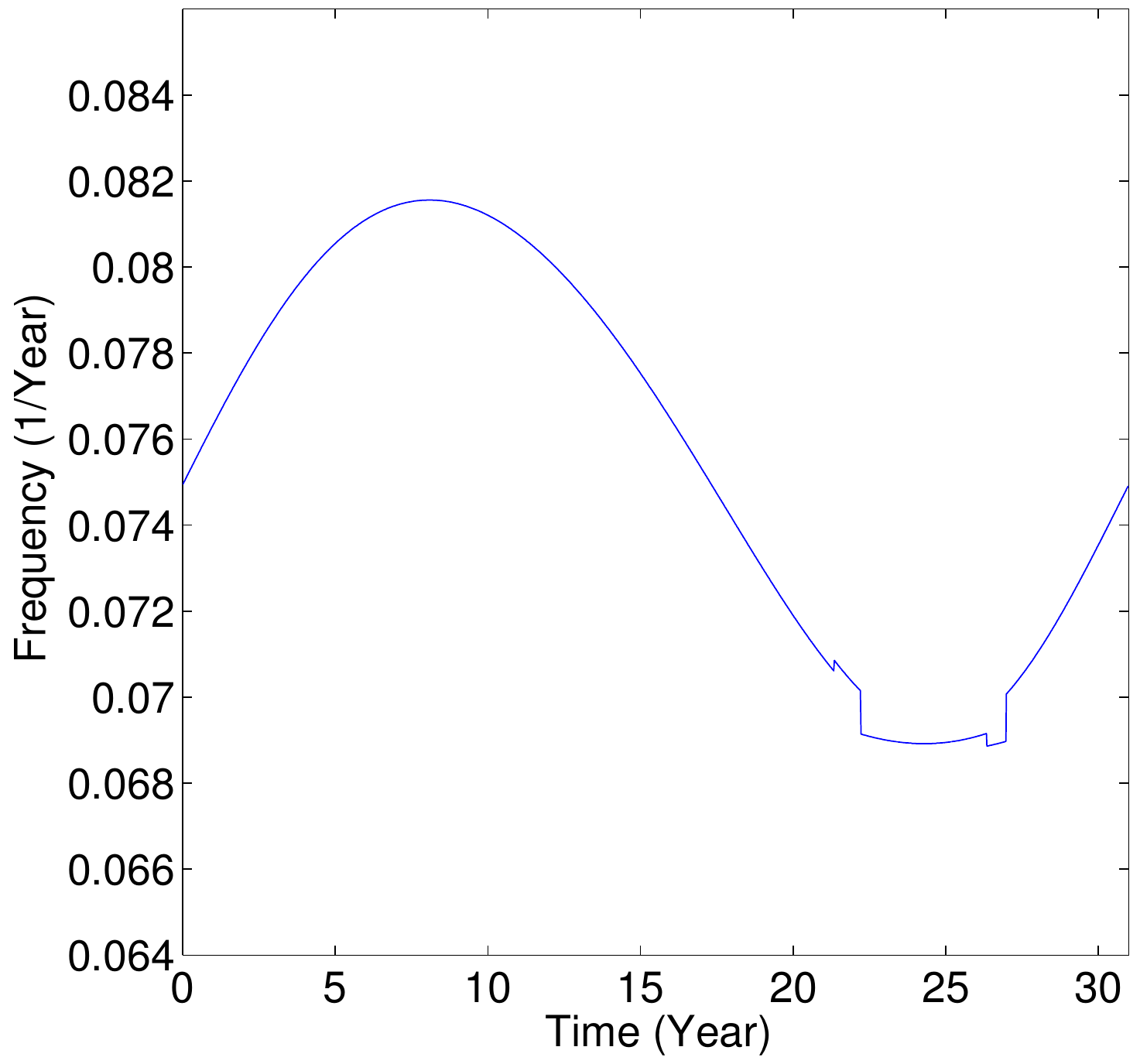}
\end{tabular}
  \end{center}
  \caption{Left: $T_f(a,b)$ of the $31$ years CO$_2$ concentration data. Middle left: The instantaneous frequency of the annual cycle. Middle right: The instantaneous frequency of the semiannual cycle. Right: the instantaneous frequency of the low frequency cycle. The curve classification algorithm groups the annual and semiannual cycle together.}
\label{fig:CO2wp}
\end{figure}

\begin{figure}
  \begin{center}
    \begin{tabular}{c}
      \includegraphics[height=2.8in]{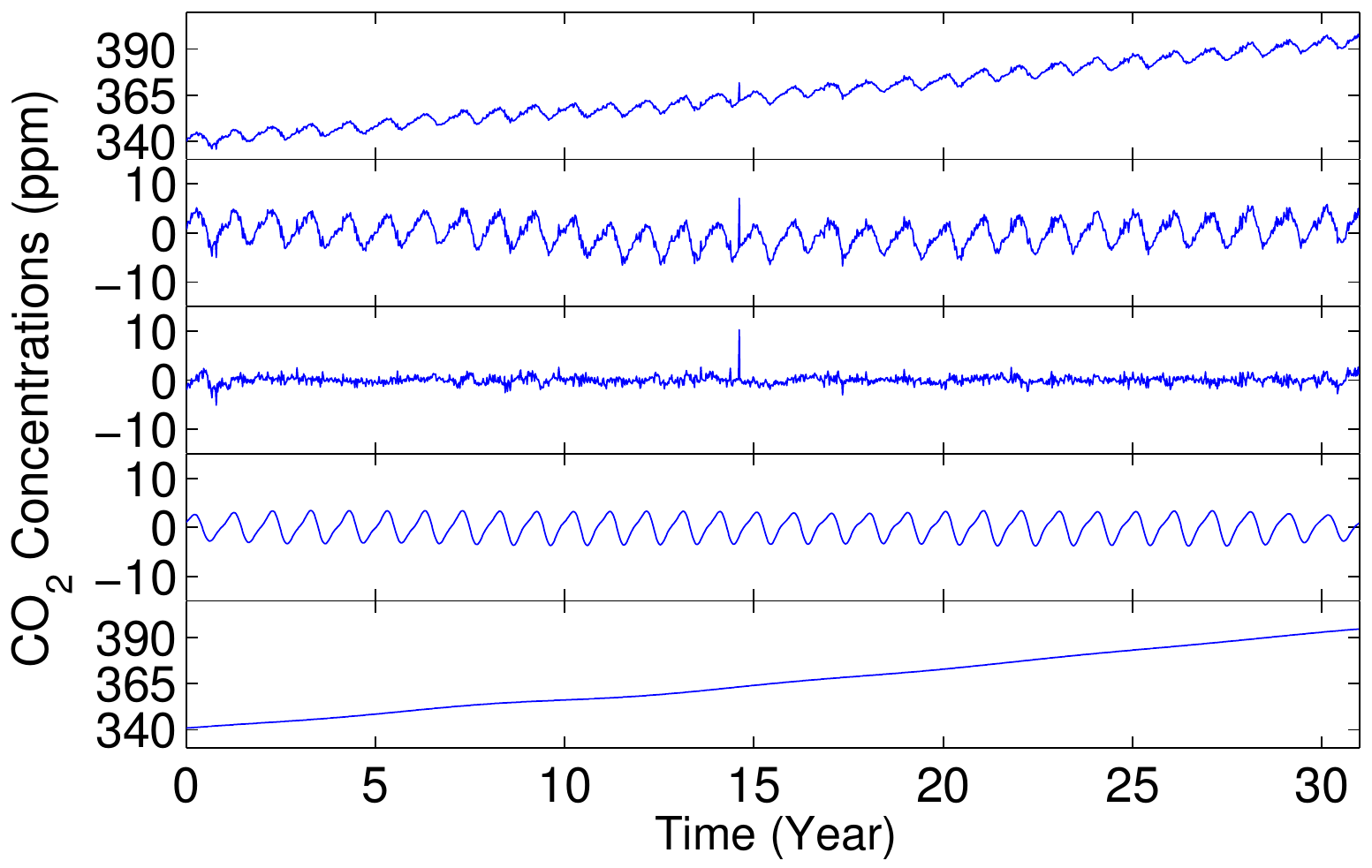}  
    \end{tabular}
  \end{center}
  \caption{Decomposition results of the $31$ years CO$_2$ concentration data provided by the GMDWP method. Top: The original data $f_0(t)$. Second row: The signal $f(t)=f_0(t)-f_r(t)$. Third row: The remaining noise term. Forth row: The annual general mode. Last row: The slowly growing mode, which is the sum of $f_r(t)$ and the low frequency component.}
\label{fig:CO2wpdec}
\end{figure}

\section{Conclusion}
\label{sec:conclusion}
This paper proposed the 1D synchrosqueezed wave packet transform (SSWPT) and the diffeomorphism based spectral analysis (DSA) method to solve the general mode decomposition problem under a weak well-separation condition and a well-different condition. These algorithms combine several ideas, namely (i) time-frequency transforms with better resolutions, (ii) a curve classification method, (iii) instantaneous information estimates, (iv) diffeomorphisms and (v) the (short time) Fourier transform. 

In particular, item (i) allows us to estimate the instantaneous information of high frequency Fourier expansion terms of general modes; items (ii) and (iii) classify the extracted Fourier expansion terms and provide the instantaneous information of general modes for the DSA method; item (iv) linearizes the phase functions of general modes so that item (v) is able to identify the spectra of general shape functions. As a consequence of the above steps, the general modes are reconstructed by adding up their Fourier expansion terms one-by-one.

There are many future directions for the general mode decomposition problem. The most important work is to estimate the instantaneous information of each general mode without any well-separation condition. Given the instantaneous information, the DSA method is able to decompose the general superposition accurately. Another work of importance is the rigorous noise analysis of these methods. Although numerical results have shown robustness against random Gaussian noise, theoretical analysis is still an open problem. The robustness properties of synchrosqueezed wavelet transforms have been analyzed in \cite{SSRobust} recently. Similar conclusions may be true for the methods proposed in this paper. It is also of interest to study other kinds of noise and to explore the effects of noise on the reconstruction. Finally, it would be appealing to weaken the well-different condition of phase functions in Theorem \ref{thm:main3} and to classify the class of well-different phase functions.


{\bf Acknowledgments.} H.Y. was partially supported by NSF grant
CDI-1027952. H.Y. thanks Lexing Ying for comments on the
manuscript.

\bibliographystyle{abbrv} \bibliography{ref}

\end{document}